\definecolor{mycolor}{rgb}{0,0,0}
\newcommand{\greybox}[1]{%
  \setbox0=\hbox{#1}%
  \setlength{\@tempdima}{\dimexpr\wd0+13pt}%
  \begin{tcolorbox}[colframe=mycolor,breakable,boxrule=0.2pt,arc=4pt,colback=white!10,
      left=4pt,right=4pt,top=6pt,bottom=6pt,boxsep=0pt,width=
\textwidth] 
    #1
  \end{tcolorbox}
}
\newtheoremstyle{boldremark}
    {\dimexpr\topsep/2\relax} 
    {\dimexpr\topsep/2\relax} 
    {}          
    {}          
    {\bfseries} 
    {.}         
    {.5em}      
    {}          
\newtheorem{theorem}{Theorem}
\newtheorem{definition}[theorem]{Definition}
\newtheorem{corollary}[theorem]{Corollary}
\newtheorem{lemma}[theorem]{Lemma}
\newtheorem{proposition}[theorem]{Proposition}
\theoremstyle{boldremark}
\newtheorem{remark}[theorem]{Remark}
\newtheorem{example}[theorem]{Example}
\newcommand*{\R}{\mathbb{R}}
\newcommand*{\N}{\mathbb{N}}
\newcommand{\gammaerror}{\text{Barycentric coordinate error}}
\newcommand{\tgammaerror}{\text{Barycentric coordinate error }}
\newcommand{\truespaceerror}{\text{True space error}}
\newcommand{\ttruespaceerror}{\text{True space error }}
\newcommand{\datapoint}{x} 
\newcommand{\data}{\Xi} 
\newcommand{\Spoint}{\sigma} 
\newcommand{\Smatrix}{\Sigma} 
\newcommand{\gener}{\textbf{f}} 
\newcommand{\observ}{\textbf{g}} 
\newcommand{\memop}{\Psi} 
\newcommand{\dimD}{D} 
\newcommand{\timeT}{T} 
\newcommand{\clus}{K} 
\newcommand{\memory}{M} 
\DeclareMathOperator*{\argmin}{arg\,min}
\newcommand{\nw}[1]{\textcolor{blue!85!black}{\textbf{NW: }#1}}
\newcommand{\pk}[1]{\textcolor{red!85!black}{\textbf{PK: }#1}}
\numberwithin{equation}{section}
\numberwithin{theorem}{section}
\definecolor{RoyalRed}{RGB}{157,16,45}
\titleformat{\chapter}[display]
  {\bfseries\LARGE}
  {\huge
  \chaptertitlename\hspace{0.1ex} \thechapter}{1pc}
  {{\titlerule[0pt]}\vspace{1pc}}
\titleformat{\section}[hang]{\bfseries\huge}{\bfseries\thesection}{1em}{}
\providecommand{\keywords}[1]
{
  \small	
  \textbf{\text{Keywords---}} #1
}
\begin{document}

\title{Data-driven modelling of nonlinear dynamics by barycentric coordinates and memory}
\author[1,2]{Niklas Wulkow}
\author[1]{P\'eter Koltai}
\author[2]{Vikram Sunkara}
\author[1,2]{Christof Schütte}
\affil[1]{Department of Mathematics and Computer Science, Freie Universität Berlin, Germany}
\affil[2]{Zuse Institute Berlin, Germany}
\maketitle

\begin{abstract}
    We present a numerical method to model dynamical systems from data. We use the recently introduced method Scalable Probabilistic Approximation (SPA) to project points from a Euclidean space to convex polytopes and represent these projected states of a system in new, lower-dimensional coordinates denoting their position in the polytope. We then introduce a specific nonlinear transformation to construct a model of the dynamics in the polytope and to transform back into the original state space. To overcome the potential loss of information from the projection to a lower-dimensional polytope, we use memory in the sense of the delay-embedding theorem of Takens. By construction, our method produces stable models. We illustrate the capacity of the method to reproduce even chaotic dynamics and attractors with multiple connected components on various examples.
\end{abstract}

\keywords{Data-driven modelling, Nonlinear dynamics, Memory, Delay embedding, Scalable Probabilistic Approximation, Barycentric coordinates}\\\\
\textsc{MSC classes: 37M05, 37M10, 52B11}

\setcounter{tocdepth}{1}

\section{Introduction}

Learning dynamical models of real world systems from observations has become a critical tool in the natural sciences. For example, learning from femtosecond scale atomistic simulations and predicting  millisecond scale reaction kinetics is now a key aspect of novel drug discovery~\cite{bernetti}. These advances are made through mathematical properties, such as the identification of metastable states in protein folding~\cite{lane} or tipping points in the Earth's climate~\cite{wunderling}.

We can categorise the data-driven techniques for reconstructing dynamics into four partially overlapping families: kernel-based methods, regression methods, neural networks, and probabilistic models.  Neural networks methods determine a representation of the dynamics in compositions of affine transformations and nonlinear activation functions~\cite{teng,vlachas,kutzNNbook,champion,stuart}. Kernel-based methods approximate the dynamics using suitable distance measures, the kernels, in the data space~\cite{gilani,scherer}. Regression methods determine a linear mapping between nonlinear, often polynomial, transformations of the data and future states of the dynamics (or nonlinear observables thereof), such as DMD~\cite{dmd}, EDMD~\cite{liEDMD} or SINDy~\cite{sindy,sindyc}. Lastly, probabilistic models use discretisations of the data space and describe the dynamics by transitional probabilities between the discrete states, such as Markov State Models (MSM)~\cite{husic} and the recently introduced method Scalable Probabilistic Approximation (SPA)~\cite{spaPaper}. 

Despite current success of these methods, tackling high-dimensional nonlinear dynamics is still a challenge. This is, because neural networks require an abundance of parameters whose estimation is costly (e.g., due to a non-convex optimization problem), kernel-based methods require a suitable choice of the kernel~\cite{hamzi}, regression methods need the right choice of transformations, making a suitable basis desirable (for instance by restricting the parametrization to collective variables or reaction coordinates~\cite{bittracher}), and can result in ill-conditioned problems~\cite{freund}. Lastly, probabilistic models are more stable and do not require strong intuition about the problem, however often suffer from the curse of dimensionality. Also, they typically only allow conclusions on the level of their discretisation. This drawback has been partially remedied by SPA, which chooses a finite set of landmarks in the data space and represents the data points by stochastic vectors which encode the strength of affiliation with the landmarks. This is in contrast to the binary affiliation with boxes as in MSMs and results in a lower loss between original data and SPA representations. Furthermore, it was shown in \cite{spaPaper} that this leads to high predictive accuracy for complex examples of both high and low dimension in problems from climate science and biology while coming at the same computational complexity as the cheap K-means clustering method. 

Based on the chosen number of landmarks, $K,$ and their positions in the state space, SPA estimates linear forward models based on the law of total probability. More precisely, SPA was invented for modeling a process $Y(t)$ in terms of another process $X(t)$ by finding linear relations between finite probabilistic representations of $X$ and~$Y$. It provides  a low-cost algorithm, allowing for simultaneous data-driven optimal discretisation, feature selection, and prediction, especially if $X$ and $Y$ are both high-dimensional and stochastic. In this article, we show how to extend SPA for usage in a slightly different setting, namely for forecasting a single process~$X$. 

As discussed in various publications (see, e.g.,~\cite{lusch,sindy,champion,liEDMD}), approximations of the linear but infinite-dimensional Koopman operator can generate sound linear approximations of nonlinear dynamics. However, they require the transformation into a suitable space where the dynamics can be approximated by a finite-dimensional linear propagator. To faithfully reconstruct long-term dynamics, we seek in the present work a nonlinear approximation of the dynamics using the cost-effective SPA representation of our high-dimensional data. As we will discuss in Section~\ref{sec:memory}, this method also admits a dual view, as it can be seen as a nonlinear reconstruction of dynamics and a linear reconstruction of the transfer operator.

Specifically, we propose not to choose a large number $K$ of discrete landmarks and construct a linear model, but rather use a small $K$ and construct a nonlinear model.
In particular, we use the outer-products of the memory terms of the SPA representation, later called \textit{path affiliations}, as our nonlinear basis functions. Contrary to convention, we choose a fixed basis over our memory terms, which preserves the probabilistic interpretation of the SPA representations. The dynamics that we learn on the path affiliations are linear and are represented by a stochastic matrix. This has the advantage that the emerging dynamics are \emph{stable} by construction---a widely desirable property~\cite{fan2021learning,BBKLSH21}.
We demonstrate that this choice of basis functions in connection with the SPA representation and memory can construct good approximations of high-dimensional nonlinear dynamics from data.


This paper is divided into three parts. 
In Section~\ref{sec:spa}, we introduce the SPA method and deconstruct the different sub-problems. We introduce a new interpretation of SPA representations of data as projections onto a polytope and theoretically explore SPA in the context of dynamical systems. 
In Section~\ref{sec:memory}, we introduce an extension to SPA based on memory which we refer to as mSPA (memory SPA). We then prove using Takens' Theorem that we can construct topologically equivalent dynamics along the path affiliations. In Section~\ref{sec:scheme}, we show how to use mSPA to learn the dynamics on the SPA representation (the Learning) and then to map it back to the original data space (the Lifting). We compute this on three examples, one simple, one high-dimensional (Kuramoto--Sivashinsky PDE), and lastly on a chaotic system (Lorenz-96).

Figure~\ref{fig:mSpa_diagram_reduced} provides an illustration of the scheme that is introduced in this article. The objects on the arrows will be defined in due course.
\begin{figure}[ht]
\centering
\includegraphics[width = 0.7\textwidth]{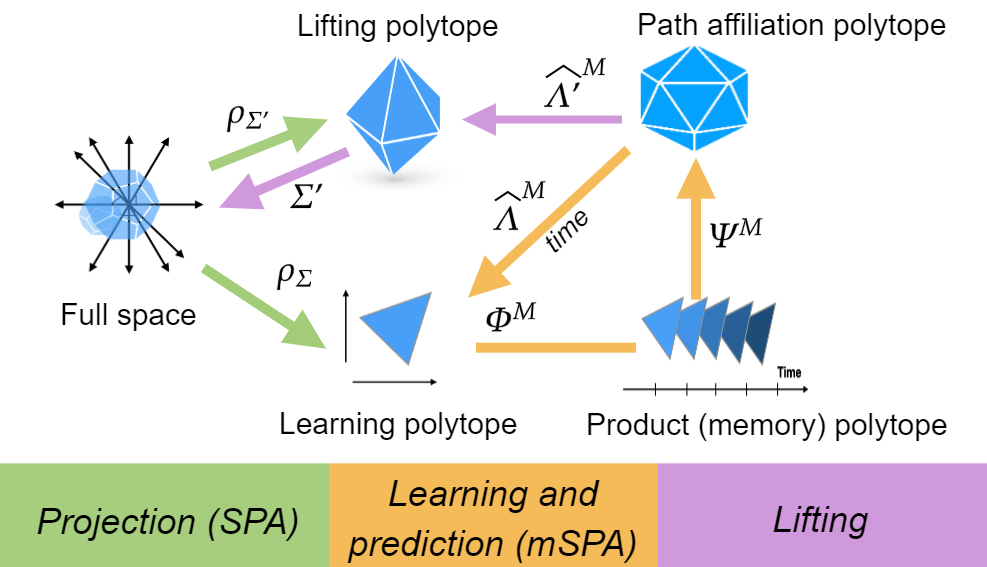}
\caption{Concept of the Learning and Lifting approach introduced in this article. SPA is used to project onto a polytope, dynamics are propagated on the polytope using the path affiliations and states on the polytope are lifted back to the original data space, using a lifting polytope as an intermediate step.}
\label{fig:mSpa_diagram_reduced}
\end{figure}

\section{Scalable Probabilistic Approximation (SPA)}
\label{sec:spa}

Data-driven methods for learning dynamics entail two fundamental steps: the first step being \emph{feature selection} and the second step being \emph{dynamics reconstruction}. 
The first step entails projecting the data into a known \emph{feature space} to help delineate the underlying dynamics of the data.
Then, in the second step, a \emph{propagator} over the projected data in the feature space is inferred which recapitulates the dynamics of the full data.
Choosing the right feature spaces is a key bottleneck for most datasets. This is particularly so for high-dimensional datasets; where feature spaces which capture the right geometry of the data, in practise, are found to be sensitive to noise and ill-conditioned when inferring the propagator. 
One method which balances the choice of an interpretable feature space and the stability of the propagator is the \emph{Scalable Probabilistic Approximation} (SPA) method~\cite{spaPaper,eSPAIllia}.

More precisely, SPA was invented for modeling a process $Y(t)$ in terms of another process $X(t)$ by finding linear relations between finite probabilistic representations of $X$ and $Y$, in the sense that the optimal matrix $\Lambda$ of conditional probabilities is computed for which the two finite dimensional probabilistic representations $\Gamma^X$ and $\Gamma^Y$ are linearly connected, $\Gamma^Y=\Lambda\Gamma^X$. SPA was designed for (i) finding optimal representations $\Gamma$ and (ii) simultaneously identify the optimal linear model $\Lambda$, especially for cases where $X$ and $Y$ are both high-dimensional and stochastic. In this article, SPA is used in a slightly different setting, namely for forecasting a single process~$X$. We will show how to extend SPA by incorporating memory for allowing for long-term predictions of $X$. To this end, we will present SPA in a form where the two steps (i) and (ii) are presented individually.

\subsection{The SPA method revisited}
Like other data driven methods, the principle behind SPA can be divided into two steps: the first to project the data onto a convex polytope and secondly to infer a linear propagator over the said polytope.
The first of the two steps, which we refer to as ``SPA I", is formulated as follows: for a sequence of $\timeT \in \N$ observations of $(\dimD \in \N)$-dimensional data points, $\data := [\datapoint_1\vert \cdots\vert\datapoint_\timeT],$ the $\clus \in \N$ vertex convex polytope to project the data onto is determined by solving for the matrices $\Smatrix \in \R^{\dimD \times \clus}$ and $\Gamma \in \R^{\clus \times \timeT}$ which satisfy:
\begin{equation*}
\tag{SPA I}
\begin{split}
&\hspace{2cm}[\Smatrix,\Gamma] = \argmin_{\Smatrix^*,\Gamma^*} \Vert \data - \Smatrix^*\Gamma^* \Vert_F,\\
\text{subject to,}& \\
& \Smatrix := [\Spoint_1\vert \cdots \vert \Spoint_K] \in \R^{\dimD\times K}, \quad \Gamma := [\gamma_1\vert \cdots\vert \gamma_\timeT] \in \R^{\clus\times \timeT},  \\
\text{with}&  \\
&\sum\limits_{k=1}^\clus \gamma_{t}^k = 1 \text{ and }  \gamma_{t}^k \geq 0 \text{ for all } t \in \{1,\ldots, T\}\text{ and }k \in \{1,\ldots, K\}. \\
\end{split}
\label{eq:SPA1}
\end{equation*}

We can deduce from the above formulation that $\sigma_\bullet,$ the columns of the matrix $\Smatrix,$ represent a new coordinate system consisting of $\clus$ basis points in $\R^\dimD$. Furthermore, $\gamma_\bullet,$ the column vectors of $\Gamma,$ are simply an orthogonal projection of data points $x_\bullet$ onto the convex polytope formed by the span of $\{\Spoint_1, \ldots , \Spoint_K \}.$ For each vertex $k \in \{1,\ldots, \clus \}$, the value of $\gamma_{\bullet}^k$ can be interpreted as a closeness/affiliation measure of the data point $\datapoint_\bullet$ to the vertex $\Spoint_k$. For brevity, here on in, we will refer to the ``convex polytope" spanned by vectors $\sigma_\bullet$ as simply a ``polytope" and denote it by~$\mathcal{M}_\Smatrix.$ Further, if $\mathcal{M}_\Sigma$ is a simplex, there exists a bijective map between the so-called \textit{barycentric coordinates} $\gamma$ to points $\Sigma \gamma \in \mathcal{M}_{\Sigma}$ so that we use $\mathcal{M}_{\Sigma}$ interchangeably for $conv(\Sigma)$ and the polytope of the barycentric coordinates since each point in $\mathcal{M}_{\Sigma}$ has a unique representation in barycentric coordinates~\cite{guessab}. We will refer to $\R^\dimD$ as the \textit{full space} and the dynamics in this space as the \textit{full system}. We show in Appendix~\ref{sec:App_rhosigma} that if $K\leq \dimD$, we can always assume $\mathcal{M}_{\Sigma}$ to be a simplex. If $\mathcal{M}_{\Sigma}$ is not a simplex, the representation of a point with barycentric coordinates might not be unique. We address this case later on in this section.

\begin{remark}
The idea of projecting to a convex polytope derived from the data has emerged in different sciences under different names, e.g.\ Archetypal Analysis~\cite{cutler} in computer science and PCCA+~\cite{weberpcca} in chemistry. The key motivation being that the vertexes have clear interpretabilty given they can be seen as ``reference"/``landmark" points in the data space. Furthermore, all data points already in the polytope have no projection error. However, on the downside, this is a linear decomposition of the data, meaning, nonlinear dynamics in the data-space will still be nonlinear on the polytope. In Section~\ref{sec:mSPA} we introduce a specific nonlinear transformation of these new coordinates that helps to capture nonlinearity.

\end{remark}

After projecting the data points from $\R^\dimD$ into the polytope $\mathcal{M}_\Sigma,$ the second step of SPA is to find a column-stochastic matrix $\Lambda \in \R^{\clus \times \clus} $ which propagates the projected points in the polytope. We refer to this step as ``SPA II", and formulate it as follows: let the matrices $\Smatrix \in \R^{\dimD \times \clus}$ and $\Gamma \in \R^{\clus \times \timeT}$ be the solution to \eqref{eq:SPA1} for a dataset $\data := [\datapoint_1\vert \cdots\vert\datapoint_\timeT],$ then find $\Lambda$ such that,
\begin{equation*}
\tag{SPA II}
\begin{split}
\Lambda &= \argmin_{\Lambda^*\in \R^{K\times K}} \Vert [\gamma_2 \vert \cdots \vert\gamma_\timeT] - \Lambda^* [\gamma_1 \vert \cdots\vert\gamma_{\timeT-1}] \Vert_F, \\
\text{satisfying,} & \\
& \Lambda \geq 0 \text{ and } \sum_{k=1}^K \Lambda_{k,\bullet} = 1. 
\end{split}
\label{eq:SPA2}
\end{equation*}

The propagator $\Lambda$ gives a linear approximation of the evolution of the full system on the polytope $\mathcal{M}_\Sigma.$ Furthermore, the propagator $\Lambda$ being a column-stochastic matrix guarantees that the image of $\mathcal{M}_{\Sigma}$ is again inside the polytope $\mathcal{M}_{\Sigma}$. In practice, the stability of reconstructions of non-linear dynamics plays an important role in applications (see, e.g.,~\cite{fan2021learning}), and we note that the method~\eqref{eq:SPA2} satisfies it by construction.
Though the full system might be nonlinear, using the SPA method reduces the learning of the dynamics on the data-space to learning a linear system on a convex polytope. To date, the literature on the translation of the dynamics after projecting into a polytope is sadly vacuous. For this reason, we will briefly formalise and give commentary on the SPA method in the context of dynamical systems in the following section.

\begin{remark}
We have to clarify that in the original SPA paper~\cite{spaPaper}, Gerber et al.\ refer to \eqref{eq:SPA1} as the SPA problem and refer to other literature for solving \eqref{eq:SPA2} (e.g.,~\cite{huisinga,olsson,rubin}). Hence, the notation of studying the projection and the propagator separately, \eqref{eq:SPA1} and \eqref{eq:SPA2}, is our interpretation. Furthermore, Gerber et al.\ prove that the solution to their one step approach is equivalent to the two step (see Theorem~2 in~\cite{spaPaper}). Hence, computing the SPA problem separately or together only differs numerically and not analytically. 
\end{remark}

\subsection{SPA in the context of dynamical systems (Part I)}
\label{sec:SPAdynSystems}
In this section we formalise the \eqref{eq:SPA1} map of a dynamical system into a polytope. With this map, we can derive a sufficient condition which can help translate the dynamical system in the full data-space into a new dynamical system in the polytope.

\begin{definition}
For $x \in \mathcal{M}$ and $\gamma$ a $K$-dimensional barycentric coordinate, we define the mapping of $x$ into a $K$-vertexed polytope characterised by $\Sigma$ as follows:

\textbf{Case 1: } for $K \leq D$:
\begin{equation}
\begin{split}
    &\hspace{1.6cm}\rho_\Sigma(\datapoint) := \argmin\limits_{\gamma^*}\Vert \datapoint - \Smatrix\gamma^*\Vert_2, \hspace{1cm} \text{s.t. } \gamma^*_\bullet \geq 0, \quad \Vert \gamma^* \Vert_1 = 1.
    \end{split}
    \label{eq:rhoK<D}
\end{equation}

\textbf{Case 2: } for $K > D$ : 
\begin{equation}
\begin{split}
    &\hspace{1.4cm}\rho_\Sigma(\datapoint,\gamma) := \argmin\limits_{\gamma^*} \Vert \gamma - \gamma^* \Vert_2 \\ & 
    \text{ s.t. }\gamma^* = \argmin\limits_{\gamma'}\Vert \datapoint - \Smatrix\gamma'\Vert_2
    \text{ with } \gamma'_\bullet,\gamma^*_\bullet  \geq 0 \text{ and } \Vert \gamma' \Vert_1, \Vert \gamma^* \Vert_1 = 1.
    \end{split}
    \label{eq:rhoK>D}
\end{equation}
\end{definition}

Case 2, $K > D,$ is interesting because generally points can be represented using different barycentric coordinates. Therefore, there is no natural choice for a function that maps the data points ($\datapoint$) to points in the polytope ($\gamma$). For this reason, we define the mapping $\rho_\Sigma$ to be dependent not only on $\datapoint$ but also a reference coordinate $\gamma.$ The intuition is that we want the barycentric coordinate of a point to minimize the distance between $\datapoint_t$ and $\Smatrix\gamma_t$ while being closest to the reference coordinate among all those minimizers. This now makes the mapping into the polytope well-defined. This is also very natural if $\gener$ represents time-stepping of a continuous-time dynamical system with small time step.

For Case 1, $K \leq D,$ we can always assume that the \eqref{eq:SPA1} solution is a $(K-1)$-dimensional simplex (a simplex is the convex hull of $K$ affine-invariant points; see Lemma~\ref{pro:rhosimplex} in Appendix~\ref{sec:App_rhosigma}). Furthermore, the map $\rho$ is also an orthogonal projection (see Lemma~\ref{lem:rhowelldefined} in \ref{sec:App_rhosigma}).

Let us consider a discrete dynamical system on a compact manifold $\mathcal{M}$ of the form,
\begin{equation*}
   \datapoint_t = \gener(\datapoint_{t-1}),
\end{equation*}
where $x_\bullet \in \mathcal{M}$ and $f:\mathcal{M} \rightarrow \mathcal{M}.$

Let us now restrict ourselves to the case $K > \dimD$. In this case, we can easily construct the following dynamical system for $\gamma_{t}$ by choosing $\gamma_{t-1}$ as reference coordinate in each time step. Then
\begin{align}
    \gamma_t &= \rho_\Sigma(\datapoint_t,\gamma_{t-1}),\\
    \intertext{we substitute $\datapoint_t$ by $\gener(\datapoint_{t-1})$,}
    &= \rho_\Sigma(\gener(\datapoint_{t-1}),\gamma_{t-1}).\\
    \intertext{Furthermore, by the representation of $\datapoint_{t-1}$ dependent on $\gamma_{t-1}$,}
    &= \rho_\Sigma(\gener(\Smatrix\gamma_{t-1}),\gamma_{t-1}),\\
    \intertext{and obtain a function solely depending on $\gamma_{t-1}$,}
    &=: \textbf{v}(\gamma_{t-1}).
    \label{eq:gammaDynSys}
\end{align}
The function $\textbf{v}: \mathcal{M}_\Sigma \rightarrow \mathcal{M}_\Sigma$ now defines the dynamical system dependent on the $\gamma_\bullet$.

Hence, in \eqref{eq:SPA2}, we are approximating the function $\textbf{v} $ by a linear mapping given by the matrix $\Lambda$, that is, $\gamma_{t} \approx \Lambda \gamma_{t-1}.$ Furthermore, the long-term behaviour, for $k$ future steps, is thus approximated by
\begin{equation}
    \gamma_{t+k} = \textbf{v}^{k} (\gamma_{t}) \approx \Lambda^{k} \gamma_{t}.
    \label{eq:SPAII_dynamicsApproximation}
\end{equation}
The propagator $\Lambda$ being a column-stochastic matrix keeps the predictions inside the polytope. However, $\Lambda$ is still only a linear approximation of $\textbf{v},$ making it imprecise in the long term when $\textbf{v}$ is nonlinear (see also Appendix~\ref{sec:App_SPAfixedpoints}). In numerous publications, e.g., recently~\cite{lusch}, the representation of nonlinear dynamics using the linear Koopman operator is discussed. However, this operator is infinite-dimensional.

In Section~\ref{sec:memory}, we will introduce a memory-based method that takes the projection error into account and propagates states inside the polytope in a nonlinear way. Furthermore, will show how we can use memory to also extend the dynamics into a polytope for the case when $K < D.$
\section{Memory SPA}
\label{sec:memory}

In this section, we will present an extension to SPA to better approximate nonlinear dynamics while staying in the polytope setting. The key motivation for the extension comes from the use of memory and the application of Takens' Theorem. In order to introduce memory, for a dynamical system on a smooth compact $\dimD$-dimensional manifold $\mathcal{M}$, with a generator function $\gener:\mathcal{M}\rightarrow \mathcal{M}$ that we assume to be invertible and an observable function $\observ:\mathcal{M}\rightarrow \R,$  we define a memory $\memory$-\textit{delay-coordinate map} ($\memory\in \mathbb{N}$) of the points in the manifold $\mathcal{M}$ as the function: 
\begin{equation}
    \Phi^M(\datapoint) := ( \observ(\datapoint), \observ(\gener^{-1}(\datapoint)),\dots,\observ(\gener^{-\memory+1}(\datapoint))).
    \label{eq:delayembedding}
\end{equation}

In the above scenario if $\memory > 2 \dimD,$ we know from Takens theorem, that the $\memory$-delay-coordinate map, $\Phi^\memory,$ is an embedding~\cite{takens} (see Appendix~\ref{sec:App_Takens} for the formal construct). Furthermore, Takens' Theorem asserts the existence of a \textit{topologically equivalent} dynamical system on the image of $\Phi^M$ given by,
\begin{equation}
    \Phi^\memory(\datapoint_{t+1}) = \Phi^\memory \circ \gener \circ  (\Phi^M)^{-1}(\observ(\datapoint_t),\dots,\observ(\datapoint_{t-\memory+1})),
    \label{eq:Takensimplies}
\end{equation}
where $(\Phi^M)^{-1}(\observ(\datapoint_t),\dots,\observ(\datapoint_{t-\memory})) = \datapoint_t$. Furthermore, we can use the inverse map to reformulate the future state of the dynamical system, $\datapoint_{t+1} = \gener(\datapoint_t),$ as a function of the $M$ memory states of the observable,
\begin{equation}
\datapoint_{t+1} = \gener((\Phi^\memory)^{-1}(\observ(\datapoint_t),\dots,\observ(\datapoint_{t-\memory+1}))).
\label{eq:TakensimpliesGammaToX}
\end{equation}

We now harness this property by choosing our observable $\observ$ as the projection of the data $\datapoint$ onto polytope $\mathcal{M}_{\Sigma}$. In Takens' original formulation, the observable $\observ$ is scalar-valued, whereas, the projection onto the polytope is a $K$-dimensional map. However, the theorem has been extended to hold for multivariate observables, e.g.,~\cite{sauer}. Hence, from here on in, we consider our observables to be multivariate.

\subsection{Memory SPA: a path affiliation approximation}
\label{sec:mSPA}
We now propose a specific observable that fits the construction of \eqref{eq:SPA2} particularly well to give a new numerical method. Let $\gamma_t$ denote the $K$-dimensional barycentric coordinates of a point $\datapoint_t \in \mathcal{M}_{\Sigma}$ and let $\gamma_t^i$ denote its $i$th entry. For a fixed memory depth $\memory$, we define as the \textit{path affiliations} the function
\begin{align}
\label{eq:PSI_def}
\memop^M(\gamma_{t-1},\dots,\gamma_{t-\memory})_i &:=  \gamma_{t-1}^{i_1}\cdot \gamma_{t-2}^{i_2} \cdots \gamma_{t-\memory}^{i_\memory}\\
J(i) &= [i_1,\dots,i_\memory]
\end{align}
$\memop^\memory(\gamma_{t-1},\dots,\gamma_{t-\memory})$ consists of all products of combinations of entries from $\gamma_{t-1}$ to $\gamma_{t-\memory}$. $J:\lbrace 1,\dots,\clus^\memory \rbrace \rightarrow \lbrace 1,\dots, \clus \rbrace^\memory$ denotes a function that brings all $\clus^\memory$ different combinations of the indices $1,\dots,\clus$ into a certain order (without loss of generality). 
For example, with $\clus = 2$ and $\memory=2$,
\begin{equation*}
\memop^2(\gamma_t,\gamma_{t-1}) = \begin{pmatrix}
\gamma_{t}^1\cdot\gamma_{t-1}^1\\
\gamma_{t}^1\cdot\gamma_{t-1}^2\\
\gamma_{t}^2\cdot\gamma_{t-1}^1\\
\gamma_{t}^2\cdot\gamma_{t-1}^2
\end{pmatrix}.
\end{equation*}
We can also write the path affiliation function \eqref{eq:PSI_def} in the form of a sequence of outer products,
\begin{align}
        \memop^\memory(\gamma_{t-1},\cdots,\gamma_{t-\memory}) &:= \gamma_{t-1} \otimes \dots \otimes \gamma_{t-\memory},\\ 
        u \otimes v &:= \text{vec}(uv^T).
    \label{eq:PSI_def_outerproduct}
\end{align}

We recall that, for a time $t,$ the barycentric coordinates $\gamma_t^\bullet$ denote the closeness/affiliation of their respective data points $\datapoint_t$ to the vertices $\sigma_{\bullet}$ from the solution of the \eqref{eq:SPA1} problem. The function $\memop^\memory$ simply takes the product of the barycentric coordinates of a sequence of time points, thereby, extending the affiliation to include all vertices through time. Furthermore, the path affiliation function is surjective onto a $K^\memory$-vertexed polytope (see Appendix~\ref{sec:App_psiimage} for the definition of that polytope) and its image is also a simplex, since path affiliations are stochastic vectors (Appendix~\ref{sec:App_pathaffStoch}).
Additionally, $\memop^\memory$ is injective if restricted to the polytope:
\begin{proposition}
\label{pro:psiinjective}
The path affiliation function $\memop$ as defined in Eq.~\eqref{eq:PSI_def} is injective.
\end{proposition}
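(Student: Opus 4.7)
The plan is to exploit the fact that each input $\gamma_{t-j}$ is a stochastic vector (non-negative entries summing to one) and to recover each factor from the path affiliation by marginalization of the rank-one tensor in~\eqref{eq:PSI_def_outerproduct}. Concretely, I would work with the outer-product form $\memop^\memory(\gamma_{t-1},\dots,\gamma_{t-\memory}) = \gamma_{t-1}\otimes\cdots\otimes\gamma_{t-\memory}$ and, for each fixed $j\in\{1,\dots,\memory\}$ and $k\in\{1,\dots,\clus\}$, consider the linear functional $L_{j,k}$ that sums the entries of $\memop^\memory$ over all multi-indices $i$ with $J(i)_j = k$.

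The core step is the identity
\[
  L_{j,k}\bigl(\memop^\memory(\gamma_{t-1},\dots,\gamma_{t-\memory})\bigr) \;=\; \gamma_{t-j}^k \,\prod_{\ell\neq j}\Bigl(\sum_{i_\ell=1}^\clus \gamma_{t-\ell}^{i_\ell}\Bigr) \;=\; \gamma_{t-j}^k,
\]
where the cancellation of the product over $\ell\neq j$ uses the stochasticity constraint from~\eqref{eq:SPA1}. Hence every coordinate of every factor $\gamma_{t-j}$ can be written as a fixed linear functional of the entries of $\memop^\memory(\gamma_{t-1},\dots,\gamma_{t-\memory})$. Consequently, if $\memop^\memory(\gamma_{t-1},\dots,\gamma_{t-\memory}) = \memop^\memory(\tilde\gamma_{t-1},\dots,\tilde\gamma_{t-\memory})$, then applying $L_{j,k}$ for all $j$ and $k$ yields $\gamma_{t-j}^k = \tilde\gamma_{t-j}^k$, which establishes injectivity.

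There is no real obstacle in this argument; it is essentially a routine marginalization of a rank-one tensor. The only subtle point worth flagging is that the stochasticity of the $\gamma$'s is indispensable: without the constraints $\sum_i \gamma_{t-\ell}^i = 1$, one can recover each $\gamma_{t-j}$ only up to a nonzero scalar, and the same proof would not close on the unrestricted domain $\R^\clus$ instead of the polytope $\M_\Smatrix$. It is also worth noting that the argument does not require $\M_\Smatrix$ to be a simplex; it works directly at the level of the barycentric coordinates $\gamma$, which is exactly the setting in which $\memop^\memory$ is defined.
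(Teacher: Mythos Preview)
Your argument is correct and in fact slightly cleaner than the paper's. The paper proceeds differently: it first reduces to the case $\memory=2$ by observing that $\memop^\memory$ is a composition of successive outer products and that a composition of injective maps is injective. Then, for two pairs $(u,v)$ and $(\tilde u,\tilde v)$ with $u\otimes v=\tilde u\otimes\tilde v$, it manipulates the componentwise identities $u_1 v_i=\tilde u_1\tilde v_i$ to conclude that $\tilde v$ is a scalar multiple of $v$ (and likewise for $u$), and finally invokes the stochasticity constraint to force the scalar to be~$1$.

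The difference is that your marginalization argument constructs an explicit linear left inverse $L_{j,k}$ and handles all $\memory$ at once, whereas the paper argues by reduction to $\memory=2$ and an algebraic scaling argument. Your approach avoids the implicit divisions by possibly vanishing entries (the paper writes $\tilde u_1=(u_1v_1)/\tilde v_1$ and $\tilde v_i=v_i\,\tilde v_1/v_1$ without treating the case $v_1=0$ or $\tilde v_1=0$ separately), and it makes transparent that the inverse is linear on the image of~$\memop^\memory$. Incidentally, the very marginalization identity you use is exactly what the paper exploits elsewhere---in the surjectivity argument (Appendix~\ref{sec:App_psiimage}) and in the construction of the matrix $E$ (Appendix~\ref{sec:App_choiceE})---but the paper does not reuse it for injectivity. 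Your final remark that stochasticity is indispensable is also to the point and matches the paper's observation that without the sum-to-one constraint one only recovers the factors up to scaling.
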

\begin{proof}
See Appendix~\ref{sec:App_psiinjective}
\end{proof}

Now, keeping in line with SPA, we want an operator which can map points from the image of the path affiliation function back into the original polytope $\mathcal{M}_\Sigma.$ That is, we want a column-stochastic matrix $\hat{\Lambda}^M \in \R^{K\times K^{\memory}},$ such that, 
\begin{equation}
    \gamma_t = \hat{\Lambda}^\memory\, \memop^\memory(\gamma_{t-1},\dots,\gamma_{t-\memory}).
    \label{eq:mSPAsystem}
\end{equation}
Given the path affiliation function $\memop^\memory$ is nonlinear, we hope that with the combination of $\memop^\memory$ and $\hat{\Lambda}^\memory$ we can accurately approximate nonlinear systems. Given that $\hat{\Lambda}^\memory$ is column-stochastic, predictions are still restricted to the \eqref{eq:SPA1} polytope since we multiply a column-stochastic matrix with a stochastic vector.

Now we will formulate our extension to the \eqref{eq:SPA2} problem. Let us denote by $[\Sigma,\Gamma]$ the solution to \eqref{eq:SPA1} for data points $\datapoint_1,\dots,\datapoint_T \in \R^\dimD$ with vertices $\sigma_1,\dots,\sigma_K$. Let us define $\psi^\memory_{t-1} := \memop^\memory(\gamma_{t-1},\dots,\gamma_{t-\memory})$. Then,
\begin{equation}
\tag{mSPA}
\begin{split}
\hat{\Lambda}^\memory &:= \argmin_{\hat{\Lambda}^* \in \R^{K\times K^{\memory}}} \left\Vert [\gamma_{\memory+1} \vert \cdots \vert\gamma_\timeT] - \hat{\Lambda}^* [\psi^\memory_{\memory} \vert \cdots\vert\psi^\memory_{\timeT-1}] \right\Vert_F, \\
\text{satisfying,} & \\
& \hat{\Lambda}^\memory \geq 0 \text{ and } \sum_{k=1}^{K} \hat{\Lambda}^\memory_{k,\bullet} = 1. 
\end{split}
\label{eq:mSPA2}
\end{equation}
We refer to the matrix $\hat{\Lambda}^\memory$ as the \emph{mSPA propagator}. Furthermore, the mSPA propagator $\hat{\Lambda}^\memory$ maps path affiliations back onto the \eqref{eq:SPA1} polytope.

Note that, Eq.~\eqref{eq:mSPAsystem} is not a dynamical system in the true sense since it is not closed. In order to define a closed dynamical system, we can simply augment the propagator by defining $\hat{\gamma}_t := [\gamma_t^T,\dots,\gamma_{t-\memory+1}^T]^T$ and write
\begin{equation}
    \hat{\gamma}_t = \Theta^\memory \memop^\memory(\hat{\gamma}_{t-1}) = \Theta^\memory \psi^\memory_{t-1},
    \label{eq:closedmemorydynamics}
\end{equation}
where $ \psi^\memory_{t-1} := \memop^\memory(\gamma_{t-1},\dots,\gamma_{t-\memory})$ and $\Theta^\memory = \left[ \begin{smallmatrix}
  \hat{\Lambda}^\memory \\ E \end{smallmatrix} \right] \in \R^{K^\memory \times K^\memory}$. Here $E$ is a binary matrix that retains $\gamma_{t-1},\dots,\gamma_{t-\memory+1}$ from $\psi^\memory_{t-1}$ (see Appendix~\ref{sec:App_choiceE} for details).
Equivalently (by Appendices~\ref{sec:App_psiimage} and~\ref{sec:App_psiinjective}), we can define a dynamical system on the path affiliations by
\begin{equation}
    \psi^\memory_t = \memop^\memory (\Theta^\memory \psi^\memory_{t-1}).
    \label{eq:closedpsidynamics}
\end{equation}
With this, the mSPA propagator, together with the path affiliation function $\memop^\memory$, progresses the dynamics in the $K^\memory$-vertexed polytope of path affiliations.
Actually, even more can be said about~\eqref{eq:closedpsidynamics}. As we show in Appendix~\ref{sec:App_quad}, the dynamics $\memop^\memory \circ \Theta^\memory$ is component-wise a quadratic polynomial in the components of~$\psi^\memory$.

\begin{example}
\label{exa:overlap}
\textbf{Observed dynamics of the Lorenz-96 system }
Let us consider the following 5-dimensional Lorenz-96-system, a coordinate-wise hierarchical continuous-time dynamical system, which is described by, 
\begin{equation*}
    \dot{x}^{(k)} = (x^{(k+1)} - x^{(k-2)})x^{(k-1)} - x^{(k)} + F \quad \text{ for } k = 1,\dots 5,
\end{equation*}
with initial condition $\datapoint_0 = (3.8,3.8,3.8,3.8,3.8001)$ and $F = 3.8.$

We now apply mSPA to a two-dimensional observation -- the $x^{(1)}$- and $x^{(4)}$-coordinates -- of the Lorenz-96-system. We used \ref{eq:SPA1} with $K=3$ to find a representation for the points and applied mSPA with different memory depths to estimate $\hat{\Lambda}$. We then made forward predictions using Eq.~\eqref{eq:mSPAsystem}. The projection onto the $\datapoint^{(1)}$- and $\datapoint^{(4)}$-coordinates gives a figure-eight shape in Euclidean space and also for the barycentric coordinates. As can be seen in Figure~\ref{fig:Lorenz96_x1x4_Fullplot}, this shape of the attractor was reconstructed by a long trajectory (1000 steps of length 0.1 -- note that the length of one period is roughly $2.5$ so that the 20-step error in Figure~\ref{fig:Lorenz96_x1x4_Fullplot} pertains to a little less than one period) only with $\memory = 6$. Of course, in this example we had not reduced the dimension of observed points any further by applying \ref{eq:SPA1} with $K = 3$ to two-dimensional data from the observable. This example only serves to illustrate the capacity of mSPA in application to barycentric coordinates. Since the $\datapoint^{(1)}$- and $\datapoint^{(4)}$-coordinates denote an observable of the full, 5-dimensional system, according to Takens' Theorem memory was necessary to describe the dynamics. This can also be seen since the overlap could not be recreated otherwise. For a memoryless prediction, the next step from the point of overlap would not be well-defined.

Note that, since the full Lorenz-96-system is 5-dimensional, by Takens, we should need at most 11 coordinates as entries of the delay-coordinate map. Since for $K=3$ we worked with 2 independent coordinates in $\gamma$, we naturally found that for $\memory = 6$, the delay-coordinate map gives a 12-dimensional map which was sufficient to contain the dynamical information of the true system.

\begin{figure}[ht!]
\centering
\includegraphics[width=0.9\textwidth]{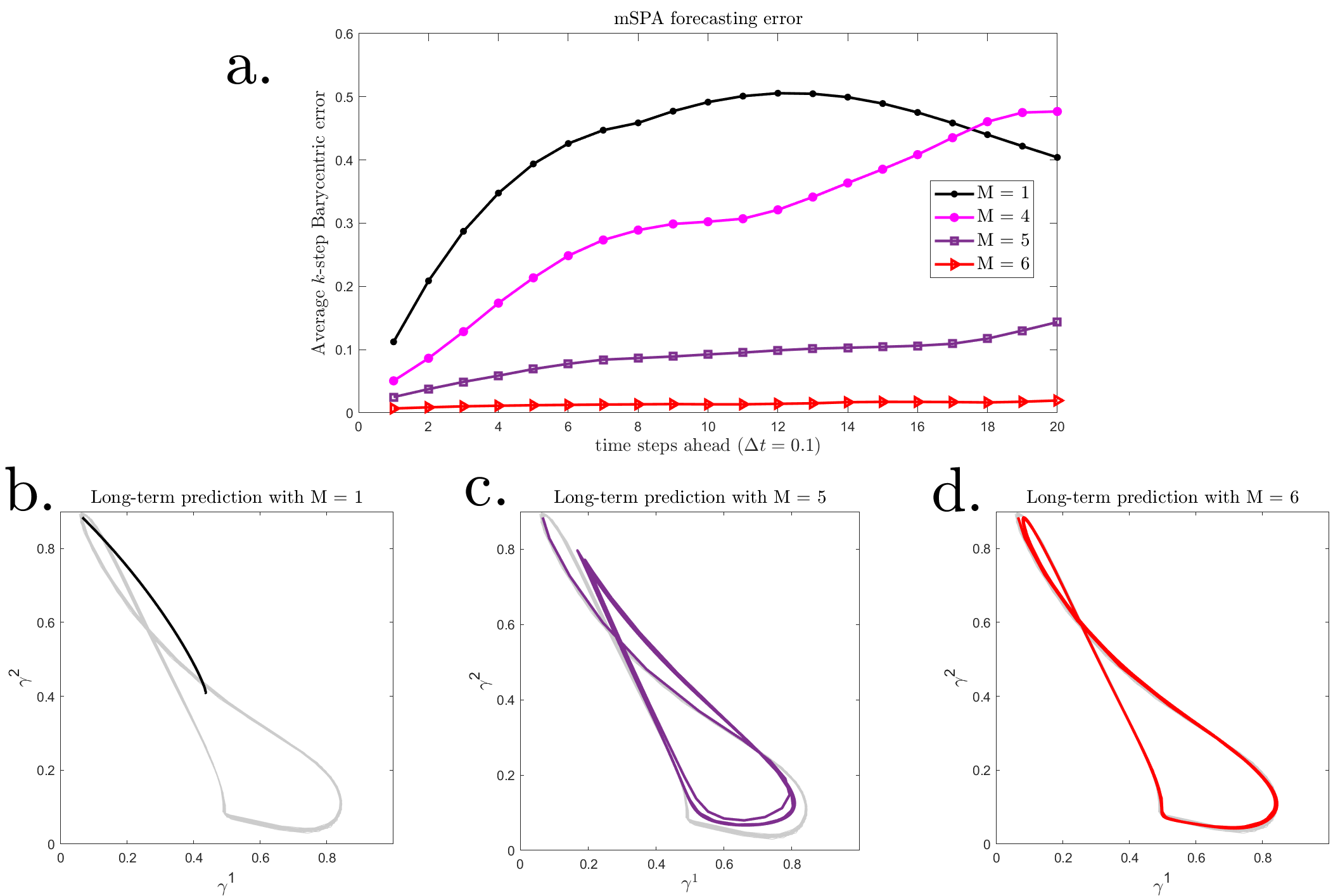}
\caption{Long-term prediction with mSPA for the Lorenz-96 system. \textbf{a.} shows the forecasting error up to 20 steps of length $\Delta t = 0.1$. Higher memory improves the prediction from the first forward-step onward. For the long-term prediction (\textbf{b.}--\textbf{d.}), only with $\memory=6$ the Lorenz-96 attractor for the $\datapoint^{(1)}$- and $\datapoint^{(4)}$-coordinates is reconstructed.}
\label{fig:Lorenz96_x1x4_Fullplot}
\end{figure}
\end{example}
\FloatBarrier

We close this section by showing that mSPA is a generalization of SPA II. For any given $K$, any dynamical system generated by \eqref{eq:SPA2} can be generated by mSPA:
\begin{proposition}
For every column-stochastic matrix $\Lambda \in \R^{K\times K}$ and every $\memory \geq 1$, there exists a column-stochastic matrix $\hat{\Lambda}^\memory \in \R^{K\times K^\memory}$ so that $\Lambda \gamma_{t-1} = \hat{\Lambda}^\memory \memop^\memory(\gamma_{t-1},\dots,\gamma_{t-\memory})$ for all $\gamma_{t-1},\dots,\gamma_{t-\memory}$.
\label{pro:mSPAvsSPA2}
\end{proposition}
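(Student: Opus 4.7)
The plan is to construct the desired $\hat{\Lambda}^M$ explicitly by replicating the columns of $\Lambda$ across the memory dimensions, and then to verify the identity by exploiting the fact that each individual $\gamma_{t-j}$ is a stochastic vector so summing over the extra indices marginalizes them out.

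First, recall that for an index $i\in\{1,\dots,K^M\}$ the map $J$ returns a tuple $J(i)=[i_1,\dots,i_M]$, and the corresponding entry of the path affiliation is
\begin{equation*}
\memop^\memory(\gamma_{t-1},\dots,\gamma_{t-M})_i \;=\; \gamma_{t-1}^{i_1}\,\gamma_{t-2}^{i_2}\cdots\gamma_{t-M}^{i_M}.
\end{equation*}
Given the column-stochastic matrix $\Lambda\in\R^{K\times K}$, I would define $\hat{\Lambda}^\memory\in\R^{K\times K^\memory}$ by
\begin{equation*}
\hat{\Lambda}^\memory_{k,i}\;:=\;\Lambda_{k,i_1}\qquad\text{where } J(i)=[i_1,\dots,i_M],
\end{equation*}
i.e.\ the column of $\hat{\Lambda}^\memory$ indexed by the path $[i_1,\dots,i_M]$ depends only on the first coordinate $i_1$ and equals the $i_1$-th column of $\Lambda$.

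Next I would verify the required identity. Using the factorisation of the path affiliations and the fact that $\sum_{i_j=1}^K \gamma_{t-j}^{i_j}=1$ for each $j=2,\dots,M$, one computes
\begin{align*}
\bigl(\hat{\Lambda}^\memory\,\memop^\memory(\gamma_{t-1},\dots,\gamma_{t-M})\bigr)_k
&=\sum_{i=1}^{K^\memory}\hat{\Lambda}^\memory_{k,i}\,\gamma_{t-1}^{i_1}\cdots\gamma_{t-M}^{i_M}\\
&=\sum_{i_1=1}^K\Lambda_{k,i_1}\,\gamma_{t-1}^{i_1}\prod_{j=2}^{M}\Bigl(\sum_{i_j=1}^K\gamma_{t-j}^{i_j}\Bigr)\\
&=\sum_{i_1=1}^K\Lambda_{k,i_1}\,\gamma_{t-1}^{i_1}\;=\;(\Lambda\gamma_{t-1})_k.
\end{align*}

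Finally I would check that $\hat{\Lambda}^\memory$ itself is column-stochastic: non-negativity of every entry is immediate from non-negativity of $\Lambda$, and for each $i$ with $J(i)=[i_1,\dots,i_M]$ one has $\sum_{k=1}^{K}\hat{\Lambda}^\memory_{k,i}=\sum_{k=1}^{K}\Lambda_{k,i_1}=1$ since $\Lambda$ is column-stochastic. There is no real obstacle here; the only subtlety is getting the indexing convention for $J$ consistent with the definition of $\memop^\memory$, so that the marginalisation happens over the correct coordinates (the ``older'' memory slots $\gamma_{t-2},\dots,\gamma_{t-M}$ rather than $\gamma_{t-1}$), which the construction above respects.
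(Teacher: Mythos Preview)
Your proof is correct and follows essentially the same route as the paper: define $\hat{\Lambda}^\memory_{k,i}:=\Lambda_{k,i_1}$ so that each column of $\hat{\Lambda}^\memory$ depends only on the first index of the path, then marginalise over the remaining indices using that each $\gamma_{t-j}$ is stochastic. Your version is in fact slightly cleaner, since you explicitly verify column-stochasticity of $\hat{\Lambda}^\memory$, which the paper leaves implicit.
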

\begin{proof}
See Appendix~\ref{sec:AppProofLemma}.
\end{proof}
From this follows a similar result about the fit of the corresponding minimization problem:
\begin{corollary}
The minimal residual in the \eqref{eq:mSPA2} problem is less than or equal to the minimal residual in the \eqref{eq:SPA2} problem.
\label{lem:mSPAresidual}
\end{corollary}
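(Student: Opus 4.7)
The plan is to exhibit a feasible point of the \eqref{eq:mSPA2} minimization whose residual is bounded above by the minimum of the \eqref{eq:SPA2} problem. Since \eqref{eq:mSPA2} is a minimization, producing any such feasible point immediately yields the inequality for the minima.

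First, I would let $\Lambda^\star$ denote an optimal solution of \eqref{eq:SPA2}, with minimal residual $r_{\mathrm{SPA}}^\star$. By Proposition~\ref{pro:mSPAvsSPA2} applied to $\Lambda^\star$ with the chosen memory depth $\memory$, there exists a column-stochastic matrix $\hat{\Lambda}^\memory \in \R^{K\times K^\memory}$ such that
\begin{equation*}
\hat{\Lambda}^\memory \memop^\memory(\gamma_{t-1},\dots,\gamma_{t-\memory}) \;=\; \Lambda^\star \gamma_{t-1}
\end{equation*}
for all admissible tuples $(\gamma_{t-1},\dots,\gamma_{t-\memory})$. In particular, this identity holds along the data sequence.

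Next, I would substitute this $\hat{\Lambda}^\memory$ as a candidate in the \eqref{eq:mSPA2} objective and estimate the Frobenius norm column-by-column:
\begin{equation*}
\bigl\Vert [\gamma_{\memory+1}\vert\cdots\vert\gamma_T] - \hat{\Lambda}^\memory [\psi^\memory_{\memory}\vert\cdots\vert\psi^\memory_{T-1}] \bigr\Vert_F^2 \;=\; \sum_{t=\memory+1}^{T} \Vert \gamma_t - \Lambda^\star \gamma_{t-1}\Vert_2^2 \;\leq\; \sum_{t=2}^{T} \Vert \gamma_t - \Lambda^\star \gamma_{t-1}\Vert_2^2 \;=\; (r_{\mathrm{SPA}}^\star)^2,
\end{equation*}
where the inequality is simply the fact that the mSPA residual sums strictly fewer (non-negative) terms because it starts at $t=\memory+1$ rather than $t=2$. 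Since $\hat{\Lambda}^\memory$ is feasible for \eqref{eq:mSPA2}, the minimal residual of \eqref{eq:mSPA2} is bounded above by $r_{\mathrm{SPA}}^\star$, yielding the claim.

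There is essentially no obstacle here: the entire weight of the argument rests on Proposition~\ref{pro:mSPAvsSPA2}, which guarantees that every admissible linear propagator on barycentric coordinates is realizable as an mSPA propagator on path affiliations. The only mild subtlety is that the two optimization problems sum over different ranges of $t$, but because both summands are non-negative this mismatch only helps the inequality.
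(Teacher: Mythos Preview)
Your argument is correct and follows exactly the route the paper takes: invoke Proposition~\ref{pro:mSPAvsSPA2} to realize the optimal \eqref{eq:SPA2} matrix as a feasible \eqref{eq:mSPA2} matrix, and conclude by comparing residuals. You are in fact slightly more careful than the paper, which does not explicitly address the different summation ranges that you handle via the non-negativity of the summands.
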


\FloatBarrier
\subsection{SPA in the context of dynamical systems (Part II)}


The precise reason why we use memory terms in the construction of the method has not been discussed yet. Recall that \ref{eq:SPA1} serves as a dimension reduction method so that typically $K < \dimD$, even $K \ll \dimD$. Also, recall that in Eq.~\eqref{eq:gammaDynSys}, we  formulate the dynamics of $\gamma$ by expressing $\datapoint_{t-1}$ as a function of $\gamma_{t-1}$ using an injection from $\datapoint$ to $\gamma$. However, if $K\leq D$ then generally the polytope does not contain all data points. In other words, there exists a point $\datapoint\notin \mathcal{M}_{\Smatrix}$ so that there is no $\gamma$ that fulfills $\datapoint = \Sigma \gamma$. As a consequence, multiple points from outside the polytope can be mapped onto the same point on the polytope so there is no injection onto $\mathcal{M}_\Sigma$. 

For this reason, we will now utilize a variant of Takens' Theorem that allows us to formulate well-defined dynamics on the barycentric coordinates in the case that $K \leq \dimD$.

\begin{theorem}[Delay embedding Theorem for Lipschitz maps~\cite{robinson}]
Let $A$ be a compact subset of a Hilbert space $H$ with upper box-counting dimension $d_{box}$, which has thickness exponent $\sigma$, and is an invariant set for a Lipschitz map $\textup{\gener}: H \rightarrow H$. Choose an integer $\memory > 2(d_{box}+\sigma)$, and suppose further that the set $A_\memory$ of $\memory$-periodic
points of $\textup{\gener}$ satisfies $d_{box}(A_\memory) < \memory/(2+\sigma)$. Then a prevalent set of Lipschitz maps $\textup{\observ}: H \rightarrow \R$ makes the $\memory$-fold observation map $\Phi: H \rightarrow \R^{\memory}$ defined in Eq.~\eqref{eq:delayembedding} injective on $A$.
\label{thm:robinsonTakens}
\end{theorem}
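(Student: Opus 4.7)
The plan is to follow the strategy of Robinson, which extends the Sauer--Yorke--Casdagli delay embedding theorem from finite-dimensional state spaces to compact subsets of a Hilbert space, using \emph{prevalence} as the infinite-dimensional substitute for ``almost every'' and the \emph{thickness exponent} $\sigma$ to quantify how well $A$ can be approximated by finite-dimensional subspaces. Recall that a Borel set $S$ in a Banach space $V$ is prevalent if there is a compactly supported probability measure $\mu$ on $V$ (the \emph{probe measure}) such that $v + e \in S$ for $\mu$-a.e.\ $e$, for every fixed $v \in V$. The goal is thus to exhibit a probe measure on the space of Lipschitz observables $\observ : H \to \R$ such that, for each fixed Lipschitz $\observ$, almost every perturbation makes $\Phi^{M}$ injective on $A$.

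First I would reformulate injectivity in the standard way: $\Phi^{M}$ fails to be injective on $A$ iff there exist distinct $\datapoint,\datapoint' \in A$ with $\observ(\gener^{-i}(\datapoint)) = \observ(\gener^{-i}(\datapoint'))$ for $i = 0,\dots,M-1$. Split the ``bad set'' of pairs into those lying on short-period orbits (pairs in $A_{M} \times A_{M}$) and the remaining ``non-periodic'' pairs. For the non-periodic part, the $M$ identities give $M$ generically independent scalar conditions on $\observ$, so after probing $\observ$ by a finite-dimensional family of Lipschitz test functions (for instance, linear functionals or polynomials in coordinates of a Schauder basis, with random coefficients supported in a ball) one gets, via a Fubini / Borel--Cantelli bookkeeping, an exclusion whose dimensional cost is controlled by $d_{box}(A \times A) \le 2 d_{box}$ and whose loss from working in infinite dimensions is controlled by $\sigma$. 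The strict inequality $M > 2(d_{box}+\sigma)$ is exactly what is needed for a Hausdorff-measure / H\"older-distortion argument (as in Hunt--Kaloshin) to show that the exceptional set of $\observ$ has probe-measure zero.

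For the periodic part I would run the same argument, but with $A$ replaced by $A_{M}$, where the orbit closes up and the $M$ observations carry only as many independent constraints as the size of $A_{M}$ permits. This is why a separate dimensional bound enters, namely $d_{box}(A_{M}) < M/(2+\sigma)$: it is the sharp threshold at which a prevalent probe still separates all distinct period-$\le M$ orbits, accounting for the distortion by the thickness exponent. Combining the two exclusion arguments, the set of $\observ$ for which $\Phi^{M}$ fails injectivity on $A$ has probe-measure zero, so its complement is prevalent, which is the conclusion.

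The main obstacle, and the technical heart of Robinson's paper, is the second paragraph: controlling the measure of the bad $\observ$'s in an infinite-dimensional Lipschitz space when the ``target'' set $A$ has no intrinsic finite dimension. This is where the thickness exponent $\sigma$ does the genuine work, allowing one to replace $A$ by a sequence of finite-dimensional linear approximations with quantitative control on the approximation error; the Lipschitz (rather than $C^{1}$) setting forces one to work with H\"older-type distortion estimates and Hausdorff measure of $A - A$ rather than transversality / Sard arguments. Once that machinery is in place, the combinatorial step counting independent constraints from the $M$ delays, and the separate treatment of $A_{M}$, is comparatively routine and mirrors the Sauer--Yorke--Casdagli proof.
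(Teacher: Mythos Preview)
The paper does not prove this theorem; it is quoted verbatim from Robinson and used as a black box to derive Theorems~\ref{thm:TakensPolytope} and~\ref{thm:SPATakens}. There is therefore no ``paper's own proof'' to compare your proposal against.

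Your sketch is a faithful high-level outline of Robinson's argument (probe measure on Lipschitz observables, splitting into periodic and non-periodic pairs, counting independent constraints from the $M$ delays, and using the thickness exponent to control the passage from infinite to finite dimensions). For the purposes of this paper, however, none of that machinery is needed: the authors simply invoke the result, so the appropriate ``proof'' here is a citation, not a reconstruction.
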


In finite dimensions, prevalent means almost every with respect to Lebesgue measure. The definition and more details on the thickness exponent and the box-counting dimension can be found in \cite{robinson} and Appendix~\ref{sec:App_Takens}. In essence, Theorem~\ref{thm:robinsonTakens} states that even observables that are not differentiable but Lipschitz continuous typically generate an embedding through the $\memory$-delay-coordinate map if $\memory$ is chosen to be sufficiently high. Further, Dellnitz et al.~\cite{dellnitz} observed that Theorem~\ref{thm:robinsonTakens} can be generalized to the case where the observable is multivariate, with an adjusted value for $\memory$\footnote[2]{For the required memory depth, let us observe that according to \cite{dellnitz}, the Theorem holds if we replace the observable $\observ:A\rightarrow \R$ by $L$ independent observables and construct a delay-map for this multivariate observable with $\memory > \frac{2(d_{box}+\sigma)}{L}$. Since for the $K$ barycentric coordinates one coordinate is always given by $1$ minus the sum of the others, they denote $K-1$ independent observables. We therefore deduce that a memory depth of $\memory > \frac{2(d_{box}+\sigma)}{K-1}$ should be enough. However, it is non-trivial to infer the thickness exponent $\sigma$ of the state space and therefore the necessary value for $\memory$.  Nevertheless, the Theorem merely gives an upper-bound for $\memory$ while one can hope that a smaller value makes for an injective delay-map. Therefore, often the most practical way to determine a well-fitting memory depth is simply by starting with an educated guess and iterating through increasing values until a sufficient quality is reached.}.

We want to apply Theorem~\ref{thm:robinsonTakens} to show that with memory we can recapture the dynamics on our data space in the path affiliation (Eq.~\eqref{eq:PSI_def}) based $K^M$-vertexed polytope. To this end, we denote
\begin{equation}
\mathcal{M}_{\Sigma}^\memory = \mathcal{M}_{\Sigma} \times \dots \times \mathcal{M}_{\Sigma}   
\end{equation}
as the \textit{product polytope} and further define
\begin{definition}
Let $\rho_{\Sigma}$ be the orthogonal projection to a polytope with vertices given by the columns of $\Sigma$. Let $A$ be a compact subset of a Hilbert space and let a function $\gener:A\rightarrow A$ be invertible. Then for $\memory \in \mathbb{N}$, the $\memory$-delay-coordinate map $\Phi^\memory_{\rho_\Sigma}:A\rightarrow \mathcal{M}_{\Sigma}^\memory$ as
\begin{equation}
   \Phi^\memory_{\rho_\Sigma}(\datapoint) = (\rho_{\Sigma}(\datapoint),\rho_{\Sigma}(\gener^{-1}(\datapoint)),\dots, \rho_{\Sigma}(\gener^{-\memory+1}(\datapoint)).
\end{equation}
\end{definition}
To extend Theorem~\ref{thm:robinsonTakens} for our aims, we will first prove that our projection, $\rho_{\Sigma}$, to the polytope is Lipschitz. Then show that its $M$-delay-coordinate map $\Phi^\memory_{\rho_\Sigma}$ on the product polytope $\mathcal{M}_{\Sigma}^\memory$ is injective. Then we extend the injection to the path affiliation polytope. This completes the proof showing the link between the full data space to the path affiliation polytope $\Psi(\mathcal{M}_{\Sigma}^\memory)$.

We begin by choosing our observable, with respect to Theorem~\ref{thm:robinsonTakens}, as the projection onto barycentric coordinates $\rho_\Sigma$ from Eq.~\eqref{eq:rhoK<D}, furthermore, it can be shown that this map is Lipschitz continuous.
\begin{lemma}
The function $\rho_\Sigma$ as defined in Eq.~\eqref{eq:rhoK<D} is Lipschitz continuous.
\label{lem:rholipschitz}
\end{lemma}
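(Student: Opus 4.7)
I would prove this by decomposing $\rho_\Sigma$ into two Lipschitz maps: the orthogonal projection onto the convex polytope $\mathcal{M}_\Sigma$ followed by the inverse of the barycentric-coordinate parametrization, and then appeal to the simplex assumption to ensure the latter is well-defined and Lipschitz.

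\textbf{Step 1 (setup and decomposition).} Since we are in the case $K \leq D$, the paper has already noted (via Lemma~\ref{pro:rhosimplex} in Appendix~\ref{sec:App_rhosigma}) that $\mathcal{M}_\Sigma$ can without loss of generality be assumed to be a $(K-1)$-simplex, i.e., the columns of $\Sigma$ are affinely independent. Let $\Pi_\Sigma : \R^D \to \mathcal{M}_\Sigma$ denote the orthogonal projection onto this closed convex set, and let $\beta_\Sigma : \mathcal{M}_\Sigma \to \Delta^{K-1}$ denote the map sending a point $p \in \mathcal{M}_\Sigma$ to its (necessarily unique) vector of barycentric coordinates with respect to $\sigma_1,\dots,\sigma_K$. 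From the definition \eqref{eq:rhoK<D}, one checks that
\begin{equation*}
\rho_\Sigma(x) = \beta_\Sigma(\Pi_\Sigma(x)),
\end{equation*}
so it suffices to prove that each of $\Pi_\Sigma$ and $\beta_\Sigma$ is Lipschitz.

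\textbf{Step 2 (Lipschitz constants of the two factors).} The map $\Pi_\Sigma$ is $1$-Lipschitz by the standard non-expansiveness of orthogonal projection onto a closed convex set in a Hilbert space. For $\beta_\Sigma$, I would fix a reference vertex (say $\sigma_K$) and use the affinely independent matrix $A := [\sigma_1 - \sigma_K \mid \cdots \mid \sigma_{K-1} - \sigma_K] \in \R^{D\times(K-1)}$, which by simplex-assumption has full column rank. For any $p = \Sigma\gamma \in \mathcal{M}_\Sigma$ we have $p - \sigma_K = A\tilde\gamma$ with $\tilde\gamma := (\gamma^1,\dots,\gamma^{K-1})^\top$, so that $\tilde\gamma = A^{+}(p-\sigma_K)$ and $\gamma^K = 1 - \mathbf{1}^\top\tilde\gamma$, where $A^+$ is the Moore--Penrose pseudoinverse. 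Hence $\beta_\Sigma$ is the restriction to $\mathcal{M}_\Sigma$ of an affine map whose linear part has operator norm bounded by $\|A^+\|$ (up to an additive term from the $\gamma^K$-coordinate, which adds at most a factor $\sqrt{K}$). This gives $\beta_\Sigma$ Lipschitz with some constant $L_\beta = L_\beta(\Sigma) < \infty$.

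\textbf{Step 3 (composition).} Combining the two bounds, for all $x,y \in \R^D$,
\begin{equation*}
\|\rho_\Sigma(x) - \rho_\Sigma(y)\|_2 = \|\beta_\Sigma(\Pi_\Sigma(x)) - \beta_\Sigma(\Pi_\Sigma(y))\|_2 \leq L_\beta \|\Pi_\Sigma(x) - \Pi_\Sigma(y)\|_2 \leq L_\beta \|x-y\|_2,
\end{equation*}
yielding Lipschitz continuity with constant $L_\beta$.

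The main subtlety, and the only place any care is required, is verifying that $\rho_\Sigma$ really does coincide with $\beta_\Sigma \circ \Pi_\Sigma$: the definition \eqref{eq:rhoK<D} first optimizes over stochastic $\gamma^*$, not over points in $\mathcal{M}_\Sigma$. Under the simplex assumption, however, $\gamma \mapsto \Sigma\gamma$ is a homeomorphism $\Delta^{K-1} \to \mathcal{M}_\Sigma$, so minimizing $\|x - \Sigma\gamma^*\|_2$ over $\Delta^{K-1}$ is equivalent to first projecting $x$ orthogonally to $\mathcal{M}_\Sigma$ and then reading off barycentric coordinates. I expect this identification (already essentially contained in Lemma~\ref{lem:rhowelldefined} of the appendix) to be the only nontrivial point; the Lipschitz constants are then immediate.
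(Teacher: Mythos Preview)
Your argument is correct. The paper's own proof is a single sentence: ``The polytope $\mathcal{M}_{\Sigma}$ is by construction closed and convex. The orthogonal projection onto such a set is always Lipschitz continuous with Lipschitz constant~$1$.'' In other words, the paper invokes only the non-expansiveness of the metric projection onto $\mathrm{conv}(\Sigma)\subset\R^D$ and then tacitly uses its standing convention of identifying $\mathcal{M}_\Sigma$ with the simplex of barycentric coordinates (stated just after the \eqref{eq:SPA1} formulation). Your decomposition $\rho_\Sigma = \beta_\Sigma \circ \Pi_\Sigma$ makes this identification explicit: you recover exactly the paper's step as the $1$-Lipschitz bound on $\Pi_\Sigma$, and you additionally verify that the coordinate read-off $\beta_\Sigma$ is itself Lipschitz via the pseudoinverse of the reduced vertex matrix. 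This is a genuine refinement, since strictly speaking $\rho_\Sigma$ as written in \eqref{eq:rhoK<D} outputs $\gamma\in\R^K$, not $\Sigma\gamma\in\R^D$, and the Lipschitz constant of $\rho_\Sigma$ is indeed $L_\beta(\Sigma)$ rather than~$1$. For the purposes of invoking Theorem~\ref{thm:robinsonTakens}, only Lipschitz continuity (not the constant) is needed, so both arguments suffice; yours is simply more complete.
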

\begin{proof}
The polytope $\mathcal{M}_{\Sigma}$ is by construction closed and convex. The orthogonal projection onto such a set is always Lipschitz continuous with Lipschitz constant $1$.
\end{proof}

Using that the projection $\rho_\Sigma$ is Lipschitz continuous, we show that the $M$-delay-coordinate map is injective into the product space of the polytopes. 
\begin{theorem}
\label{thm:TakensPolytope}
Let $A$ and $\textup{\gener}$ be given as in Theorem~\ref{thm:robinsonTakens}. Let the projection $\rho_\Sigma$ from Eq.~\eqref{eq:rhoK<D} be inside the prevalent set of Lipschitz maps from Theorem~\ref{thm:robinsonTakens}. Then for a sufficient memory depth $\memory$, the $\memory$-delay-coordinate map of $\Phi^\memory_{\rho_\Sigma}$ is an injection from $A$ to $\mathcal{M}_{\Sigma}^\memory$.
\end{theorem}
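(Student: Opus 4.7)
The plan is to reduce the statement to a direct application of Theorem~\ref{thm:robinsonTakens} (or, more precisely, its multivariate extension due to Dellnitz et al.\ mentioned in the footnote), with $\rho_\Sigma$ playing the role of the Lipschitz observable. Three things need to be checked: (i) that $\rho_\Sigma$ satisfies the regularity hypothesis needed to invoke the embedding theorem, (ii) that the multivariate version applies to this particular observable with an appropriate choice of $M$, and (iii) that the image of the resulting delay map actually lies in $\mathcal{M}_\Sigma^M$.

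First, I would appeal to Lemma~\ref{lem:rholipschitz} to conclude that $\rho_\Sigma: A \to \mathcal{M}_\Sigma \subset \mathbb{R}^K$ is Lipschitz continuous. This, combined with the hypothesis that $\rho_\Sigma$ lies inside the prevalent set of Lipschitz maps identified in Theorem~\ref{thm:robinsonTakens}, places us squarely within the regime where the delay embedding result is available.

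Second, I would invoke the multivariate extension of Theorem~\ref{thm:robinsonTakens}. Since a barycentric coordinate vector has $K$ entries that are constrained to sum to~$1$, the map $\rho_\Sigma$ effectively delivers $K-1$ independent Lipschitz observables. Following the footnote, it therefore suffices to choose $M > 2(d_{box}+\sigma)/(K-1)$. With this choice, the multivariate embedding theorem asserts that the delay map
\[
A \;\longrightarrow\; \mathbb{R}^{KM}, \qquad \datapoint \;\longmapsto\; \bigl(\rho_\Sigma(\datapoint), \rho_\Sigma(\gener^{-1}(\datapoint)), \dots, \rho_\Sigma(\gener^{-M+1}(\datapoint))\bigr),
\]
is injective on $A$. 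This is exactly $\Phi^M_{\rho_\Sigma}$ viewed as a map into $\mathbb{R}^{KM}$.

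Third, I would observe that each component $\rho_\Sigma(\gener^{-j}(\datapoint))$ takes values in $\mathcal{M}_\Sigma$ by the very definition of the projection, so $\Phi^M_{\rho_\Sigma}(A) \subseteq \mathcal{M}_\Sigma^M$. Injectivity into $\mathbb{R}^{KM}$ therefore restricts to injectivity into $\mathcal{M}_\Sigma^M$, which is the desired statement. The main substantive step is the second one: transferring from the scalar formulation of Theorem~\ref{thm:robinsonTakens} to the multivariate case and correctly accounting for the one linear dependency among the barycentric coordinates when selecting $M$. Everything else is either a direct invocation of an already-established property ($\rho_\Sigma$ being Lipschitz) or a tautological containment of the codomain.
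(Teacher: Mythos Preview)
Your proposal is correct and follows essentially the same route as the paper's proof: invoke Lemma~\ref{lem:rholipschitz} for Lipschitz continuity of $\rho_\Sigma$, note that the codomain of $\Phi^\memory_{\rho_\Sigma}$ is the product polytope $\mathcal{M}_\Sigma^\memory$, and then apply Theorem~\ref{thm:robinsonTakens} (in its multivariate form) under the standing assumption that $\rho_\Sigma$ lies in the prevalent set. If anything, your write-up is more explicit than the paper's, since you spell out the bound on $\memory$ from the multivariate extension and the $K-1$ independent coordinates, whereas the paper simply says ``a sufficient memory depth'' and cites the theorem.
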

\begin{proof}
$\rho_\Sigma$ is Lipschitz continuous by Lemma~\ref{lem:rholipschitz}. It maps from $A$ to $\mathcal{M}_{\Sigma}$, so that $\Phi^\memory$ maps from $A$ to $\bigotimes_{i=1}^\memory \mathcal{M}_{\Sigma}$. Then the claim is a direct consequence of Theorem~\ref{thm:robinsonTakens} and the assumption that $\rho_\Sigma$ is inside the prevalent set of Lipschitz maps.
\end{proof}

This theorem yields that sequences of delayed projections of the data in the polytope should be faithful representations of the corresponding states in the full data space. We can easily extend this representation into the path affiliation polytope.
\begin{corollary}
The function $\memop^\memory \circ \Phi^\memory$ is an injection from $A$ into the $K^\memory$-vertexed polytope of path affiliations.
\label{cor:psiphiinjective}
\end{corollary}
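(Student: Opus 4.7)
The statement is set up so that all the hard work has already been done; what remains is essentially a bookkeeping argument assembling previously proved injectivity results. My plan is to exhibit $\memop^\memory \circ \Phi^\memory_{\rho_\Sigma}$ explicitly as a composition of two injections and verify that the image lands in the correct polytope.

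First I would invoke Theorem~\ref{thm:TakensPolytope}: under its hypotheses, for $M$ chosen sufficiently large, the delay-coordinate map
\[
\Phi^\memory_{\rho_\Sigma}: A \longrightarrow \mathcal{M}_\Sigma^\memory, \qquad \datapoint \mapsto (\rho_\Sigma(\datapoint),\rho_\Sigma(\gener^{-1}(\datapoint)),\dots,\rho_\Sigma(\gener^{-\memory+1}(\datapoint))),
\]
is an injection from $A$ into the product polytope $\mathcal{M}_\Sigma^\memory$. Second, I would recall Proposition~\ref{pro:psiinjective}, which asserts that $\memop^\memory$, viewed as the map $(\gamma_{t-1},\dots,\gamma_{t-\memory}) \mapsto \gamma_{t-1}\otimes\cdots\otimes \gamma_{t-\memory}$, is injective on $\mathcal{M}_\Sigma^\memory$. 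Since the composition of two injections is itself an injection, I obtain that $\memop^\memory \circ \Phi^\memory_{\rho_\Sigma}$ is injective on $A$.

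Finally I would close the argument by observing that the image of $\memop^\memory$ is contained in the $K^\memory$-vertexed path affiliation polytope (as established in Appendix~\ref{sec:App_psiimage} and noted in the discussion on stochasticity in Appendix~\ref{sec:App_pathaffStoch}), so the range of $\memop^\memory \circ \Phi^\memory_{\rho_\Sigma}$ is indeed contained in this polytope. There is no real obstacle here beyond matching notation: the mild subtlety is simply confirming that the domain of $\memop^\memory$ used in Proposition~\ref{pro:psiinjective} coincides with the codomain $\mathcal{M}_\Sigma^\memory$ of $\Phi^\memory_{\rho_\Sigma}$, which it does by construction of $\rho_\Sigma$ as a projection onto $\mathcal{M}_\Sigma$. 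This yields the claimed injection and completes the proof.
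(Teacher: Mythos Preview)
Your proposal is correct and mirrors the paper's own proof: both invoke Theorem~\ref{thm:TakensPolytope} for injectivity of $\Phi^\memory_{\rho_\Sigma}$, Proposition~\ref{pro:psiinjective} for injectivity of $\memop^\memory$, and conclude by composing injections, with the image landing in the path affiliation polytope by construction. Your added remark about matching the codomain of $\Phi^\memory_{\rho_\Sigma}$ with the domain of $\memop^\memory$ is a reasonable sanity check but not a point of divergence.
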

\begin{proof}
We map a point $\datapoint \in A$ to $\Phi^\memory(\datapoint)$ and apply $\memop^\memory$ to it. By definition of $\memop^\memory$, the result lies inside the path affiliation polytope. $\Phi^\memory$ is injective by Theorem~\ref{thm:TakensPolytope}. $\memop^\memory$ is injective by Proposition~\ref{pro:psiinjective}. The concatenation of injective functions is injective again so that $\memop^\memory \circ \Phi^\memory$ is injective on $A$.
\end{proof}

We have shown we can project a dynamical system on a compact manifold into a convex polytope, then use memory in the polytope and the path affiliation map to create an injection. Now we will show that using this injection we can construct equivalent dynamics of the true system in the path affiliation polytope.
\begin{theorem}
Let $A$ be given as in Theorem~\ref{thm:robinsonTakens} and let $\textup{\gener}:A\rightarrow A$ generate a discrete-time dynamical system on $A$, denoted by $\textup{\gener}(\datapoint_{t-1}) = \datapoint_t$. Let the projection $\rho_\Sigma$ onto barycentric coordinates of the \ref{eq:SPA1} polytope be inside the prevalent set of maps from Theorem~\ref{thm:robinsonTakens}. Then there exists an operator \textup{\textbf{v}} so that for a path affiliation vector $\psi^\memory_{t-1} = \memop^\memory(\gamma_{t-1},\dots,\gamma_{t-\memory})$, it holds $\textup{\textbf{v}}(\psi^\memory_{t-1}) = \rho_\Sigma(\textup{\gener}(\datapoint_{t-1})) = \gamma_t$.
\label{thm:SPATakens}
\end{theorem}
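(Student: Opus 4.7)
The plan is to construct \textup{\textbf{v}} explicitly as the composition
\[
  \textup{\textbf{v}} \;:=\; \rho_\Sigma \circ \textup{\gener} \circ (\memop^\memory \circ \Phi^\memory_{\rho_\Sigma})^{-1},
\]
defined on the image $\memop^\memory(\Phi^\memory_{\rho_\Sigma}(A))$, which is the set of path affiliation vectors that actually arise from trajectories of \textup{\gener} on $A$. The whole point of setting up Corollary~\ref{cor:psiphiinjective} in the preceding paragraphs is to ensure that this inverse is well-defined as a function on that image, so essentially the proof should be a short verification that this composition does what the theorem claims.

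First I would invoke Corollary~\ref{cor:psiphiinjective}: for a sufficient memory depth $\memory$, the map $\memop^\memory \circ \Phi^\memory_{\rho_\Sigma}$ is injective on $A$, hence invertible as a function onto its image inside the path affiliation polytope. So $(\memop^\memory \circ \Phi^\memory_{\rho_\Sigma})^{-1}$ is an unambiguous map back to $A$. Next I would observe that if $\psi^\memory_{t-1} = \memop^\memory(\gamma_{t-1},\dots,\gamma_{t-\memory})$ corresponds to a trajectory, then $\gamma_{t-j} = \rho_\Sigma(\gener^{-j+1}(\datapoint_{t-1}))$ for $j=1,\dots,\memory$, so by the definition of $\Phi^\memory_{\rho_\Sigma}$ we have
\[
  \psi^\memory_{t-1} \;=\; \memop^\memory\!\bigl(\Phi^\memory_{\rho_\Sigma}(\datapoint_{t-1})\bigr).
\]
Applying the inverse therefore recovers exactly $\datapoint_{t-1}$, and then applying $\textup{\gener}$ followed by $\rho_\Sigma$ gives $\rho_\Sigma(\textup{\gener}(\datapoint_{t-1})) = \gamma_t$, which is the claimed identity.

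The only mild subtlety, and arguably the main obstacle, is making sure that Corollary~\ref{cor:psiphiinjective} actually applies to $\datapoint_{t-1}$, i.e.\ that the hypothesis ``$\rho_\Sigma$ lies in the prevalent set of Theorem~\ref{thm:robinsonTakens}'' is the right condition to guarantee injectivity of $\memop^\memory \circ \Phi^\memory_{\rho_\Sigma}$ on all of $A$. This is handled by Theorem~\ref{thm:TakensPolytope} together with Proposition~\ref{pro:psiinjective} (which yield the corollary), but it is worth stating explicitly that $\memory$ should be taken large enough for the delay-embedding conclusion of Theorem~\ref{thm:robinsonTakens}/Theorem~\ref{thm:TakensPolytope} to hold; the theorem then follows immediately from the construction of \textup{\textbf{v}}.

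I would close by remarking that \textup{\textbf{v}} is only guaranteed to be defined on the image $\memop^\memory(\Phi^\memory_{\rho_\Sigma}(A))$, not on the whole $K^\memory$-vertex polytope, but this is exactly the set of path affiliation vectors the learning scheme will ever encounter in practice when tracking a trajectory, so the construction is adequate for the \ref{eq:mSPA2} framework used in the remainder of the paper.
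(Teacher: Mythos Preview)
Your proposal is correct and follows essentially the same approach as the paper: define $\textbf{v}$ as $\rho_\Sigma \circ \gener$ composed with the inverse of the injective map from $A$ into the path affiliation polytope, and then verify the identity directly. The only cosmetic difference is that the paper splits the inverse into two stages, writing $\textbf{v} = \rho_\Sigma \circ \gener \circ (\Phi^\memory)^{-1} \circ (\memop^\memory)^{-1}$ and invoking Theorem~\ref{thm:TakensPolytope} and Proposition~\ref{pro:psiinjective} separately, while you invoke Corollary~\ref{cor:psiphiinjective} once and use the composite inverse $(\memop^\memory \circ \Phi^\memory_{\rho_\Sigma})^{-1}$; the paper itself remarks that these two presentations are equivalent.
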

\begin{proof}
\begin{align}
     \gamma_t &= \rho_\Sigma(\datapoint_t)\\
     &=\rho_\Sigma(\gener(\datapoint_{t-1})),\\
      \intertext{using the injectivity of $\memop^\memory$}
     &=[\rho_\Sigma \circ \gener \circ(\Phi^\memory)^{-1}](\gamma_{t-1},\dots,\gamma_{t-\memory}),\\
     \intertext{using the injectivity of $\memop^\memory$ and therefore existence of its inverse on path affiliation vectors,}
     &= \underbrace{[\rho_\Sigma \circ \gener \circ(\Phi^\memory)^{-1}\circ (\memop^\memory)^{-1}]}_{=:\textbf{v}} (\psi^\memory_{t-1}).
     \label{eq:memorydynamics}
\end{align}
Note that by Corollary~\ref{cor:psiphiinjective} the last two steps could be merged into one. We have separated them to make the proof easier to follow and illustrate that points in $A$ can be injectively mapped to the product polytope and from there to the path affiliation polytope.
\end{proof}
The dynamical system Eq.~\eqref{eq:closedpsidynamics} arises through
\begin{equation}
\psi_t^\memory = [\Psi^\memory \circ \Phi^\memory \circ F\circ(\Phi^\memory)^{-1}\circ (\Psi^\memory)^{-1}] (\psi^\memory_{t-1}).
\end{equation}

There are two key points which have been derived in this subsection. Firstly, we can interpret the projection to the polytope derived in \ref{eq:SPA1} as an observable of the dynamical system. Hence, we can translate classical machinery like Takens' Theorem to this setting. Secondly, we can observe from Theorem~\ref{thm:SPATakens} that the mSPA propagator $\hat{\Lambda}^\memory$ is in essence a linear approximation of the operator~\textbf{v}.

\begin{remark}
Alternatively to approximating the function \textbf{v} with $\hat{\Lambda}^\memory$, one could approximate it using a neural network, whose capacity to reconstruct nonlinear dynamics has been well documented over the last years (see, e.g.,~\cite{vlachas}), including using delayed coordinates~\cite{marquez,xunbi}. We have employed this approach on two numerical examples, generating accurate forecasts of dynamics with lower memory depth compared to the one needed for the linear mapping (see Appendix~\ref{sec:App_NN}). At the same time, we have chosen a network structure that, as the column-stochastic linear mapping, keeps the dynamics inside the polytope. However, neural networks often require a highly cost-expensive search for optimal parameters and a good choice of their structure. Therefore, and for the reasons explained above, for now we focus on finding a linear mapping as an approximation for \textbf{v}. Constructing suitable neural networks to this end is a topic of future research.
\end{remark}

\FloatBarrier
\subsection{SPA in the context of Perron--Frobenius and Koopman operators}

Let us investigate the sufficiency of representing \emph{non-linear} dynamics by the proper choice of a \emph{linear} transformation $\hat{\Lambda}^\memory$ in~\ref{eq:mSPA2} from a different perspective than above. Note that this question is reminiscent of the well-known paradigm that so-called transfer operators (Perron--Frobenius and Koopman) are linear representations of arbitrary non-linear dynamics at the cost of acting in infinite dimensions. Indeed, the Perron--Frobenius operator $\mathcal{P}$, that describes the action of the dynamics $\gener$ on distributions, is defined on Dirac measures via~$\mathcal{P} \delta_x = \delta_{\gener(x)}$ for all~$x$.

In which sense do the affiliations $\gamma^i_t$ tell us something about the distribution of the system? For this, let us assume for simplicity that the SPA polytope $\mathcal{M}_{\Sigma}$ with vertices $\sigma_j$ is a full-dimensional simplex and denote by $\varphi_i: \mathcal{M}_{\Sigma} \to [0,1]$ the unique linear functions with $\varphi_i(\sigma_j) = \delta_{ij}$ (the usual ``hat functions''). Denoting the $i$-th barycentric coordinate of the point $x \in \mathcal{M}_{\Sigma}$ by $\gamma^i(x)$, we have $\gamma^i(x) = \varphi_i(x) = \int_{\mathcal{M}_{\Sigma}} \varphi_i(y) \delta_x(y)\,dy$.
If a trajectory of the system starts in a point $x \in \mathcal{M}_{\Sigma}$ and stays there for all times $t\ge 0$, the approximation in mSPA is built by the observables~$\gamma_t^i = \int_{\mathcal{M}_{\Sigma}} \varphi_i(y)\, [\mathcal{P}^t\delta_x](y)\,dy$.

It is thus natural to ask whether the observations $\gamma^i_t$ are sufficient to represent~$\mathcal{P}$. To this end, it is necessary that $(\gamma_t)_{t\in\mathbb{N}}$ can represent the natural space on which~$\mathcal{P}$ acts; this can for measure-preserving systems be taken to be~$L^2(\mathcal{M}_{\Sigma})$, cf.~\cite{KlKoSch16}. Hence, is the mapping $\smash{ L^2(\mathcal{M}_{\Sigma}) \ni f \mapsto 
\big( \int_{\mathcal{M}_{\Sigma}} \varphi_i\, \mathcal{P}^tf \big)_{t\in \mathbb{N}} }$ injective? Note, that for simplicity we can consider this question for a fixed~$i$. By the duality of $\mathcal{P}$ and the Koopman operator~$\mathcal{K}$. i.e.\ $ \int_{\mathcal{M}_{\Sigma}} \varphi_i\, \mathcal{P}^tf = \int_{\mathcal{M}_{\Sigma}} \mathcal{K}^t\varphi_i\, f$, the equivalent question is: Is $\mathrm{span} \{ \varphi_i, \mathcal{K}\varphi_i, \mathcal{K}^2\varphi_i,\ldots \}$ a dense subspace of~$L^2(\mathcal{M}_{\Sigma})$?
This latter question has been investigated in ergodic theory~\cite[Chapter~18]{eisner2015operator}, but the technical nature of the results is beyond the scope of the present work.

In conclusion, it seems reasonable to restrict our attention in Eq.~\eqref{eq:mSPAsystem} to linear transformations $\hat{\Lambda}^\memory$ to find a sufficiently powerful representation of the system at hand. The assumptions under which this is possible will be investigated elsewhere.
In Appendix~\ref{sec:App_Koopman} we discuss the connection between mSPA and methods which focus on finding a finite-dimensional approximation of the Koopman operator.

In the following section, we will put together all the SPA methods into a single numerical scheme.

\section{Learning and Lifting}
\label{sec:scheme}
We have now introduced mSPA as a method to reconstruct dynamics in the barycentric coordinates. We will now place it within a numerical scheme which reconstructs dynamics in the full space. In particular, we will use \ref{eq:SPA1} to project to a low-dimensional polytope and mSPA to learn the dynamics. We will introduce an additional mSPA-like step to lift the predicted barycentric coordinates back to the full space.

\subsection{Combining mSPA and SPA~I}
In the mSPA construction we build a map into a small polytope, $K \ll D,$  from the $K^M$ path affiliation polytope. We showed in Eq.~\eqref{eq:closedpsidynamics} that we could construct a dynamical system on the $K^M$ path affiliation polytope. Now we want to extend this map to give corresponding predictions in the original data space. We do this by introducing a new polytope of dimension $K',$ which we call the \emph{lifting polytope}, such that the projection loss between the data and its representation in the lifting polytope is minimal. Then using SPA II we can learn a column-stochastic matrix which maps from the memory polytope to the lifting polytope (see Figure~\ref{fig:mSpa_diagram_reduced}).
This problem reads,
\begin{equation}
\begin{split}
\hat{\Lambda}'^\memory &= \argmin_{\Lambda^*} \Vert [\gamma'_{\memory} \vert \cdots \vert\gamma'_\timeT] - \Lambda^* [\psi_\memory \vert \cdots\vert\psi_{\timeT}] \Vert_F\\
\text{satisfying,} & \\
& \hat{\Lambda}'^\memory \geq 0 \text{ and } \sum_{k=1}^{K} \hat{\Lambda}'^\memory_{k,\bullet} = 1. 
\end{split}
\label{eq:spa2lifting}
\tag{SPA Lifting}
\end{equation}
where $\gamma'$ denotes the barycentric coordinates in the lifting polytope with vertices $\Sigma'$ and subject to the typical constraints on $\Lambda^*.$ 

Naturally, there are many possible ways of mapping from the memory polytope to the original data space. The simplest is arguably given by solving the problem
\begin{equation}
\argmin_{A^*} \Vert [\datapoint_{\memory} \vert \cdots \vert\datapoint_\timeT] - A^* [\psi_{\memory} \vert \cdots\vert\psi_{\timeT}] \Vert_F
\label{eq:linearmapping}
\end{equation}
to construct a linear propagator from path affiliations to subsequent barycentric coordinates (this is simply the \eqref{eq:spa2lifting} problem without the constraints on the solution). However, we found that if the original data has low rank, this optimization problem is badly-conditioned.
It turned out in many examples that this caused the ensuing best-fitting matrix to give highly unstable mappings since it is prone to contain very high coefficients. This did not happen when mapping into the lifting polytope first. In this way, the intermediate step of mapping into the lifting polytope can be seen as a preconditioning step.
We present the algorithmic steps of the methods in Algorithm~\ref{alg:mSPAscheme}. The colors used in the algorithm correspond to the ones from Figure~\ref{fig:mSpa_diagram_reduced} from the Introduction.
\begin{algorithm}[]
\SetKwInOut{KwInput}{Input}
\KwInput{data points $\datapoint_1,\dots,\datapoint_T \in \R^{\dimD}$, $K,K',\memory \in \mathbb{N}$}

\SetKwInOut{KwInput}{\textit{Learning of operators}}
\KwInput{}
\vbox{\colorbox{YellowGreen!20}{Solve \eqref{eq:SPA1} for $K$ to obtain $[\Sigma,\Gamma]$ \quad(\text{Learning polytope})}}
\vbox{\colorbox{YellowGreen!20}{Solve \eqref{eq:SPA1} for $K'$ to obtain $[\Sigma',\Gamma']$ \quad(\text{Lifting polytope})}}
\vbox{\colorbox{Dandelion!20}{Solve \eqref{eq:mSPA2} for $\Gamma$ to obtain $\hat{\Lambda}^\memory$ \quad(Learn propagator on learning polytope)}}
\vbox{\colorbox{Thistle!20}{Solve \eqref{eq:spa2lifting} to obtain $\hat{\Lambda}'^\memory$ (Learn map from learning to lifting polytope)}}
\SetKwInOut{KwInput}{\textit{Prediction}}
\KwInput{}
For starting values $\gamma_1,\dots,\gamma_{\memory}$ on learning polytope compute $\psi^\memory_{\memory}$ as in Eq.~\eqref{eq:PSI_def} \\
 \For{$t = \memory+1:T_{end}$}{
  \vbox{\colorbox{Dandelion!20}{$\gamma_{t} \leftarrow \hat{\Lambda}^\memory \psi^\memory_{t-1}$ \quad\text{(Propagation on learning polytope as in Eq.~\eqref{eq:mSPAsystem})}}}
  
    \vbox{\colorbox{Dandelion!20}{ $\psi^\memory_t \leftarrow \memop^\memory(\gamma_t,\dots,\gamma_{t-\memory+1}) \quad$\text{(Computation of path affiliation)}}}
    
    \vbox{\colorbox{Thistle!20}{$\gamma'_t \leftarrow \hat{\Lambda}'^\memory \psi^\memory_t$ \quad\text{(Mapping to lifting polytope)}}}
  
  \vbox{\colorbox{YellowGreen!20}{$\datapoint_{t} \leftarrow \Smatrix' \gamma'_{t}$ \quad\text{(Lifting to original space)}}}
 }
 \caption{Numerical scheme}
 \label{alg:mSPAscheme}
\end{algorithm}

\subsection{Examples}
We will now apply the Learning and Lifting approach with mSPA to two examples, the Kuramoto-Sivashinsky equation and the Chua circuit.
\begin{example}
\label{exa:KSlowdim}
\textbf{The Kuramoto-Sivashinsky equation }
Let us now consider the application of this approach on a nonlinear variant of the Kuramoto--Sivashinsky (KS) equation~\cite{sivashinsky,kuramoto}. We  first solve the mSPA problem on our learning polytope, choosing a memory depth required to reconstruct the nonlinear dynamics. Then we will build our lifting polytope such that there is minimal projection loss, and finally, we will learn the map between the two polytopes. We will compare our approach with other well-known methods.

The Kuramoto--Sivashinsky (KS) equation is a one-dimensional fourth-order PDE and has become a typical example of complex dynamics in recent research on system identification methods (see \cite{lulin2015,linfu}; the relation between mSPA and their method will be discussed in Appendix~\ref{sec:App_Koopman}). It reads
\begin{equation}
    u_t + 4u_{xxxx} + 16 u_{xx} + 8(u^2)_x = 0,\quad t \geq 0,
\end{equation}
for $x \in [0,2\pi)$ with periodic boundary conditions. We discretized the interval $[0,2\pi)$ into 100 equidistant grid points and perform numerical integration with a time step of $0.001$. As initial values we chose $u(x) = 0.0001\cos(x)(1+\sin(x))$. In order to approximate the solution of the PDE we used a 4th-order exponential time differencing Runge--Kutta (ETDRK4)~\cite{coxETDRK4} on the $100$ grid points. Simulating the dynamics until a time of $6.5$ and truncating states from before $2.5$, we obtained $4000$ time steps. With this, we obtained a $100$-dimensional system of which each coordinate represented one grid point.

The PDE solution showed the behaviour of a travelling wave: high values move from top to bottom in the interval and are passed from the lower boundary to the upper boundary (see Figure~\ref{fig:KS_gammaprediction_K2_tau001}~\textbf{a.}). We projected the data onto polytopes with increasing number of vertices $(K)$ and saw that the projection loss decreased with increase in $K$ with an elbow point around $K=7$ (see Figure~\ref{fig:KS_gammaprediction_K2_tau001}~\textbf{b.}). Then we investigated the memory depth required for the worst case polytope ($K=2$) to recapture the periodic behaviour (see Figure~\ref{fig:KS_gammaprediction_K2_tau001}~\textbf{c.}). We saw that we needed memory depth of $\memory=10$ to be able to construct a propagator which could reconstruct the periodic behaviour in the data (see Figure~\ref{fig:KS_gammaprediction_K2_tau001}~\textbf{d.}). 

\begin{figure}[ht]
\centering
\includegraphics[width = 0.9\textwidth]{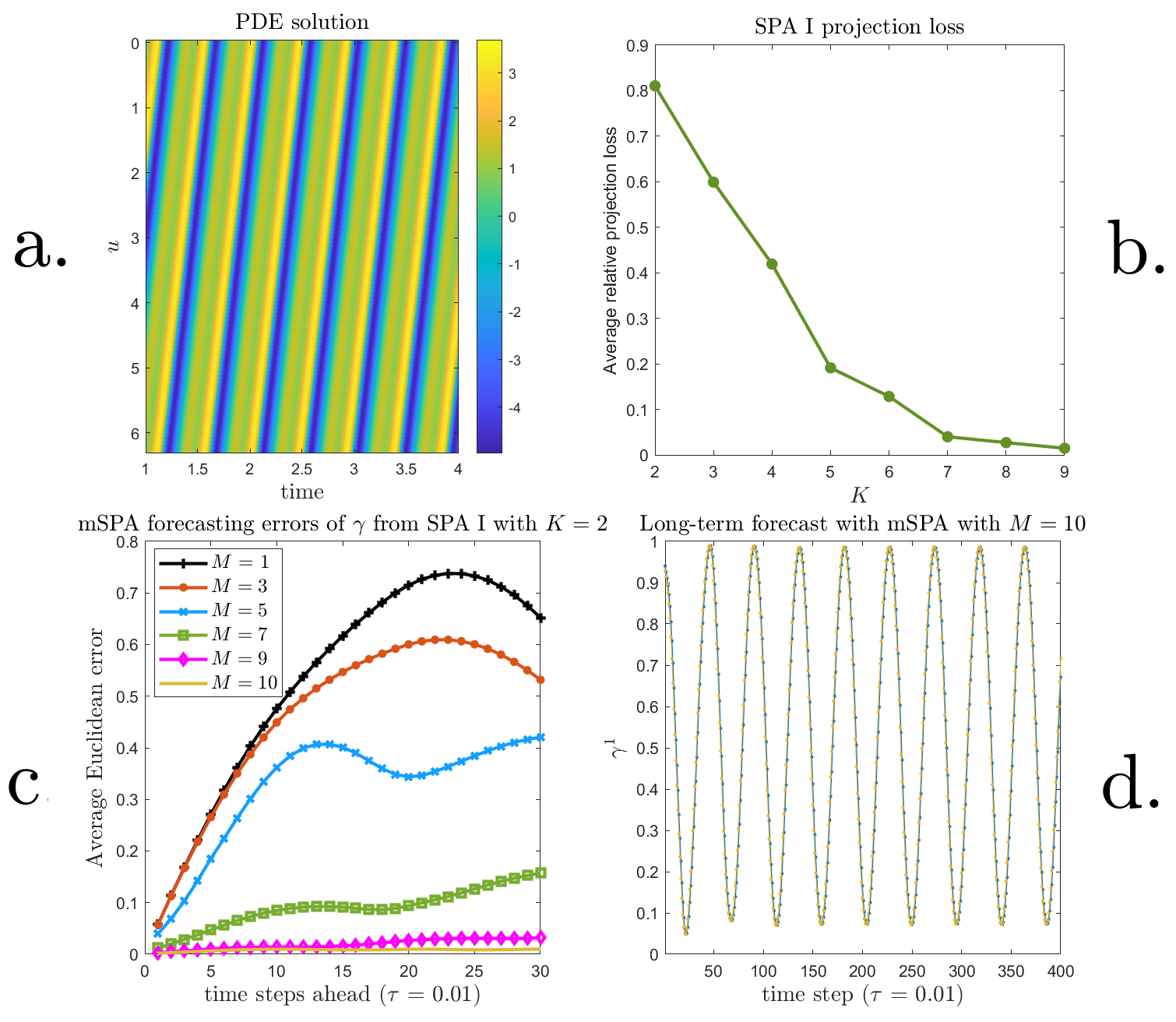}
\caption{\textbf{a.}: evolution of the KS PDE for 4000 time steps of length $0.001$. \textbf{b.}: projection loss of the data with different number of vertices. \textbf{c.}: average \tgammaerror with mSPA with different memory depths $\memory$ with time step of length $\tau = 0.01$. The 100-dimensional data were projected onto a line, i.e., we used $K = 2$. For high memory depths, the errors stayed small for increasing forecast width. For low memory depths, the prediction quickly converged to a fixed point (as in Figure~\ref{fig:Lorenz96_x1x4_Fullplot} from Example~\ref{exa:overlap}). As a consequence the error is the distance between a wave-like and a constant function, making for a wave-like error function. \textbf{d.}: $\gamma^1$-coordinate of long-term forecast of $\gamma$ using mSPA with $\memory = 10$. Prediction in yellow and true trajectory in blue. We can see that the amplitude and frequency were approximately met in the barycentric coordinates.}
\label{fig:KS_gammaprediction_K2_tau001}
\end{figure}
\end{example}

We can see in the above example, that even after projecting the full state space onto a line, we could use memory to reconstruct the nonlinear behaviour in the data. However, because of the projection onto the line, we cannot interpret or predict the trajectories of the propagator with respect to the real data space. Given we are only predicting on the learning polytope, we cannot compare the method to other methods. Hence, now to have interpretability and map the trajectories back into the real data space, we create our lifting polytope. We chose to have 8 vertices, given it had a projection loss of only $\approx 2\%$  (Figure~\ref{fig:KS_gammaprediction_K2_tau001} \textbf{b.}). 

Now that we have a way to map back into the data space, we will now compare the accuracy of the mSPA method against the known linear autoregressive (AR)~\cite{brockwell} models with varying memory depths and Sparse Identification of Nonlinear Dynamics (SINDy)~\cite{sindy} with nonlinear basis functions. Their model formulations are given Appendix~\ref{sec:App_Koopman}.

\begin{example}
\label{exa:KSlifting}
\textbf{Reconstructing the dynamics of the KS PDE }
We first investigated the linear AR method. In Example~\ref{exa:KSlowdim} we observed that memory of $10$ was needed to capture the nonlinearity in the data with mSPA if $K=2$. Hence, we first trained a linear AR on the real data with a memory of $M=10$. We saw that the memory size was insufficient to give an accurate reconstruction of the periodicity (see Figure~\ref{fig:KS_allmethods_Comparison}~\textbf{a.} I and II). When we studied the long-term behaviour, that is, we trained up to time $T=3$ and forecast up to time $T=20,$ we saw that the prediction deviates from the true solution and converges to a point (see Figure~\ref{fig:KS_allmethods_Comparison}~\textbf{a.}, III and IV). To find the necessary memory depth needed for the linear AR method, we increased the memory incrementally and found that a memory depth of $M=15$ gave a very accurate approximation (see Figure~\ref{fig:KS_allmethods_Comparison}~\textbf{b.}, I and II). Furthermore, the long-term forecasting of the method was stable and we could see the Euclidean error between the real trajectory and the predicted trajectory plateaued (see Figure~\ref{fig:KS_allmethods_Comparison}~\textbf{a.}, III and IV).

We saw that the nonlinearity of the problem demanded a high memory depth for the linear AR method, hence, we investigated which nonlinear functions would be necessary to capture the dynamics without memory using SINDy. We tested different basis functions, starting from linear to polynomials of degree two and three. The approximations diverged to infinity and also adding higher degree polynomials made the problem quickly intractable (results not shown). We could get a very good approximation when we considered specifically only quadratic polynomial basis combined with very particular trigonometric functions (see Appendix~\ref{sec:App_techsKS}). When we trained on these basis functions, we found that the data was very well approximated (see Figure~\ref{fig:KS_allmethods_Comparison}~\textbf{c.}, I and II), and furthermore, the long term forecasting was also very stable and accurate (see Figure~\ref{fig:KS_allmethods_Comparison}~\textbf{a.} III and IV). That said, SINDy is capable to reconstruct the dynamics very precisely but for this example requires a \textit{very} specific set of basis functions. We can see that if we build a linear approximation (like the AR method), we need sufficient memory to capture the periodicity. We can circumvent memory (e.g., by using SINDy), but then one needs to find very specific and highly nonlinear basis functions to be able to capture the periodicity.

Now considering the same problem with mSPA with a lifting polytope, we found that we could capture the periodicity with a smaller memory depth. We fixed our lifting polytope to have eight vertices ($K' = 8$). To verify that the transformation to barycentric coordinates in the lifting polytope was not by itself supporting the reconstruction of dynamics, we trained a linear AR model with different memory depths on the lifting polytope and found that, even with a memory depth of 20, we did not see stable periodic predictions (see Figure~\ref{fig:KS_allmethods_Comparison}~\textbf{d.}, I--IV). Furthermore, we also used SINDy on the lifting polytope with up to quadratic basis functions and found that the approximations diverged (results not shown). We then tested two scenarios: the first a two-vertexed learning polytope with memory depth of 10 and the second a three-vertexed learning polytope with a memory depth of 6. In the case of $(K=2$, $M=10)$, by using the lifting polytope we could extrapolate our trajectories from the memory polytope onto the real data space. We saw that we could recapture the periodic behaviour and amplitude (see Figure~\ref{fig:KS_allmethods_Comparison}~\textbf{e.}, I and II), however we saw a small shift in delay in the frequency as the system was forecast for longer (see Figure~\ref{fig:KS_allmethods_Comparison}~\textbf{e.}, III and IV). Then we trained using the case $(K=3$, $M=6)$ and found that the approximation was better than the $(K=2$, $M=10)$ case (see Figure~\ref{fig:KS_allmethods_Comparison}~\textbf{f.}, I and II), that is, we did not observe as strong a frequency shift in the approximation (see Figure~\ref{fig:KS_allmethods_Comparison}~\textbf{f.}, III and IV). The forecasting error in \textbf{e.} is larger than in \textbf{d.} but that seems to come from the shift in the frequency while the amplitude is well maintained which is not the case in~\textbf{d.} The same can be observed for~\textbf{f.} For comparison, the dimensions of the coefficient matrices for all methods used in this example are given in the caption of Figure~\ref{fig:KS_allmethods_Comparison}. Their total numbers lie in the same orders of magnitude except for SINDy which needs fewer coefficients but for which, as mentioned above, most sets of basis functions yielded diverging models.


\begin{figure}
\includegraphics[width = \textwidth]{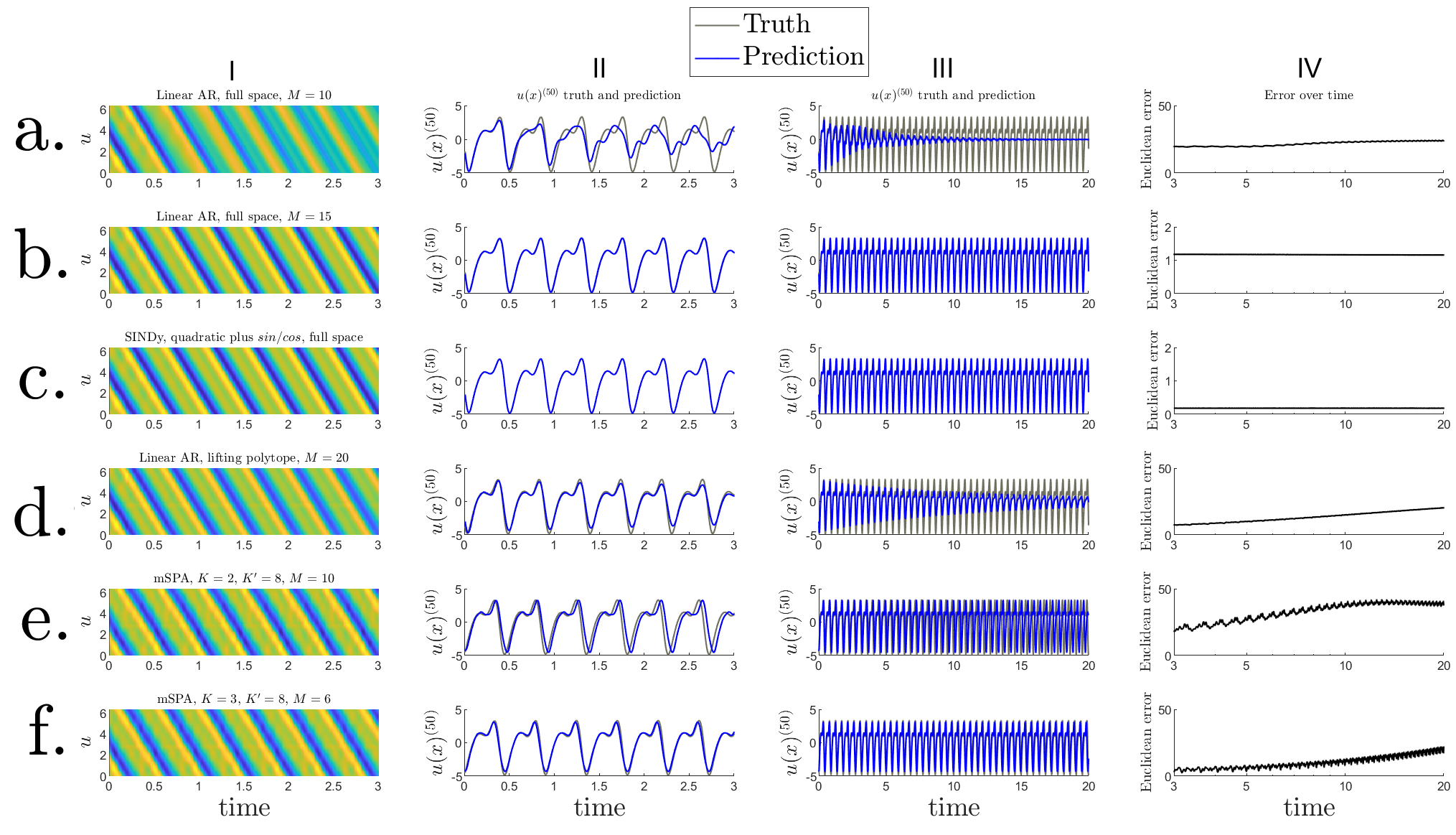}
\caption{\textbf{a.} A linear AR model on the full space with memory $\memory = 10$ generates quick convergence to a fixed point. \textbf{b.} A linear AR model with $\memory = 15$ well recreates the data in the long term with only a low error. \textbf{c.} The SINDy model with a very particular choice of basis functions (see Appendix~\ref{sec:App_techsKS}) generates a very accurate forecast. All models we found using other sets of basis functions diverged, however. \textbf{d.} On the lifting polytope, a linear AR model even with $\memory = 20$ cannot reconstruct the periodicity of the dynamics. \textbf{e.} With mSPA we are able to reconstruct the amplitude and approximately the frequency of the dynamics after \ref{eq:SPA1} projection onto only a line ($K=2$) and $\memory = 10$. \textbf{f.} With $K=3$ we only need 6 memory terms to gain a better approximation. The error increases towards the end of the frame because of a slight deviation in the frequency.\\
\textit{Number of coefficients needed:} \textbf{a.} ($100\times 1000)$-matrix, \textbf{b.} ($100\times 1500)$-matrix,  \textbf{c.} ($100\times 7050)$-matrix, \textbf{d.} $(8\times 160)$-matrix, \textbf{e.} ($2\times 1024)$-matrix and ($8\times 1024)$-matrix, \textbf{f.} ($3\times 729)$-matrix and ($8\times 729)$-matrix.}
\label{fig:KS_allmethods_Comparison}
\end{figure}
\end{example}

Collating the results, we can see that mSPA with the lifting polytope was able to reconstruct amplitude and periodicity of a nonlinear dynamical system. We were able to do it with less memory than the linear AR model. Furthermore, we were able to reconstruct the dynamics without prior knowledge on the dynamics, unlike SINDy, where we had to scan through many combination of basis functions. We only have three free parameters, the number of vertices for the learning manifold, the memory depth, and the number of vertices for the lifting polytope $(K,M,K').$ With these three, we could reconstruct nonlinear dynamics from data.   

As a last example, we consider a chaotic Lorenz-96 system and evaluate mSPA's capacity for short-term forecasts for complex systems.

\begin{example}
\label{exa:overlap}
\textbf{Lorenz-96, $\dimD = 10$, $F = 5$ (chaotic regime)}
We consider the Lorenz-96 system from Example~\ref{exa:overlap} but this time choose $\dimD = 10$ and $F=5$ which is sufficient to create chaotic behaviour as we can observe by verifying that the maximal Lyapunov exponent~\cite{cencini} of the ensuing trajectory is positive.

We create realisations with a time step of~$0.05$. We use one realisation of length $100$ (2000 time steps) for training of the mSPA model and create 50 realisations of length $7.5$ (150 time steps) each from randomly chosen starting points close to the Lorenz-96 attractor for testing. In \ref{eq:SPA1}, we chose $K = 3$ and $K' = 8$ to determine learning and lifting polytopes. The latter produces a projection error of 23\%. In mSPA, we use the memory depth~$\memory = 7$. 
As in the previous examples, we have normalized the data into the unit cube $[-1,1]^\dimD$ by a linear transformation.

We compare the autocorrelations of the testing trajectories and their predictions. For a set of $N_{\text{test}}$ trajectories of length $T$ time steps each in the learning polytope, we define the autocorrelation of the $i$th coordinate for a time lag of $l$ time steps as
\begin{equation}
    a_i^{l} := \frac{1}{N_{\text{test}}(T-l)}\sum_{r=1}^{N_{\text{test}}} \sum_{t=1}^{T-l} ((\gamma_r)_t^i-\bar{\gamma^i}) ((\gamma_r)_{t-l}^i-\bar{\gamma^i}).
\end{equation}
The term $(\gamma_r)_t$ is the $t$th point in the $r$th testing trajectory and $\bar{\gamma}$ is the mean of $\gamma$ across all testing trajectories. For $\gamma'$ (lifting polytope) and $\datapoint$ (full space), the definition is analogous. The results are shown in Figure~\ref{fig:L96_autocorr}.

We can see that, similarly to the Kuramoto--Sivashinsky PDE, mSPA produces accurate predictions in the learning polytope (Figure~\ref{fig:L96_autocorr},~\textbf{a.}).  In the lifting polytope and the full space, an additional error is introduced by the non-trivial lifting step. The autocorrelations are well met (Figure~\ref{fig:L96_autocorr},~\textbf{b.}--\textbf{d.}) in learning and lifting polytopes and the full space, except for short time lags for lifting polytope and full space. This example emphasizes that mSPA, while not precisely recreating individual realizations of chaotic dynamics, is capable of reproducing quantities related to the statistical behavior.
\begin{figure}[ht]
\includegraphics[width = \textwidth]{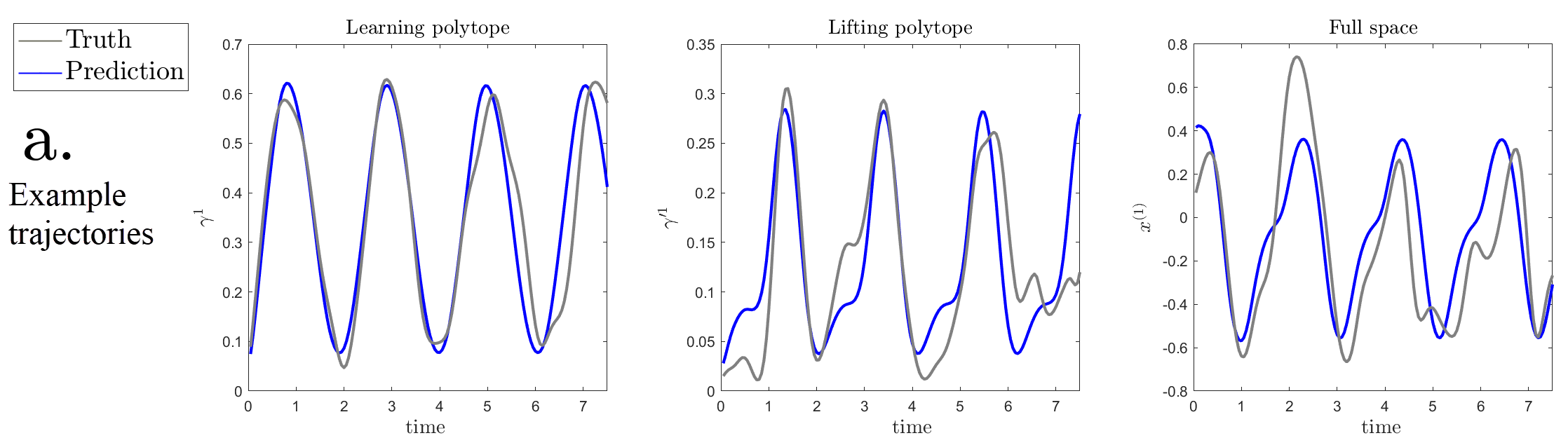}
\includegraphics[width = \textwidth]{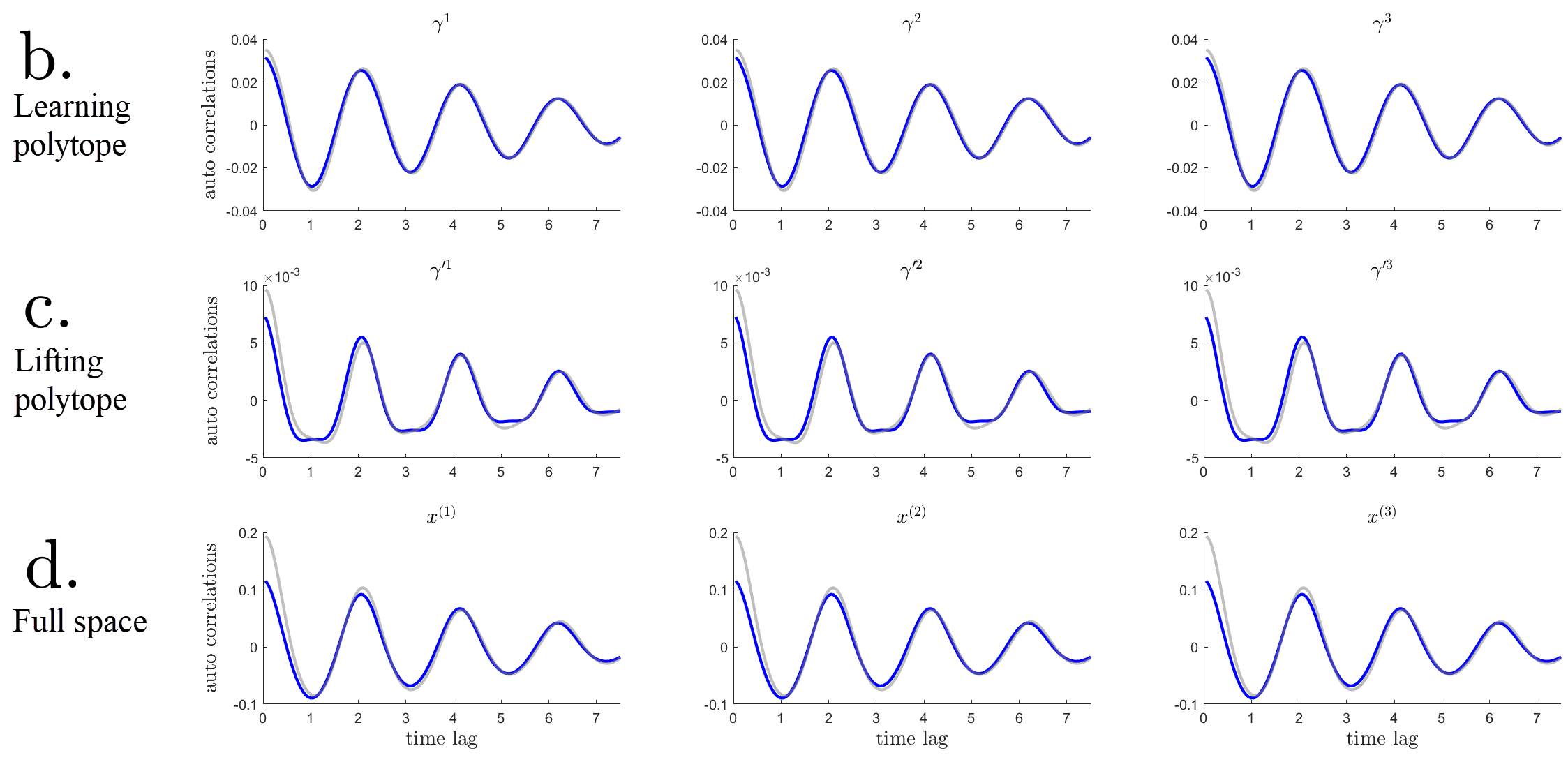}
\caption{Results of mSPA on the chaotic Lorenz-96 variant. 
\textbf{a.} Examples of forecasts. Prediction in blue, true data in grey. \textbf{b.}--\textbf{d.} Autocorrelations in learning and lifting polytopes and full space. For the sake of visualization, only the first three coordinates of each are shown. Note that the autocorrelations are not normalized by the variances of the trajectories as in the Pearson correlation coefficient, so that the orders of magnitude on the $y$-axes offers no immediate interpretation.}
\label{fig:L96_autocorr}
\end{figure}
\end{example}

In Appendix~\ref{sec:App_chua}, we show an additional example given by the Chua circuit, a three-dimensional chaotic attractor, to demonstrate the capacity of mSPA to reconstruct dynamical behavior on an attractor having multiple connected components. There we also demonstrate an additional modelling step by distinguishing between a small \textit{forward time step} and a larger \textit{memory time step}, the latter denoting time separation between points in the delay-coordinate map.

We conclude this section with some theoretical and numerical observations on optimal choices for the parameters of \ref{eq:SPA1} and \ref{eq:mSPA2}.

\FloatBarrier

\subsection{Theoretical observations on the choice of parameters}

We derived the evolution of $\gamma$ in the form of $\gamma_t = \textbf{v}(\gamma_{t-1}\,\dots,\gamma_{t-\memory})$ (see Eq.~\eqref{eq:memorydynamics}). We can find for the approximation error of mSPA
\begin{equation*}
    \begin{split}
        \gamma_t - \hat{\Lambda}^\memory \psi^\memory(\gamma_{t-1},\dots,\gamma_{t-\memory}) &= \gamma_t - \sum_{i=1}^{K^\memory} \Lambda_{| i} (\psi^\memory_t)_i\\
        &= \sum_{i=1}^{K^\memory}(\textbf{v}(\gamma_{t-1}\,\dots,\gamma_{t-\memory}) - \Lambda_{| i} )  (\psi^\memory_t)_i
    \end{split}
\end{equation*}
where $\Lambda_{| i}$ denotes the $i$th column of $\hat{\Lambda}^{\memory}$.

We can observe from this that we require $\Lambda_{| i}$ to be close to $\textbf{v}(\gamma_{t-1}\,\dots,\gamma_{t-\memory})$ whenever $(\psi^\memory_t)_i$ is high, especially when all other entries of $(\psi^\memory_t)$ are low. As a consequence, it is desirable if the entries of $\gamma$ differ from each other sufficiently throughout the trajectory because otherwise all $(\psi^\memory_t)_i$ are comparably high at all times and the predictions made by multiplication with $\hat{\Lambda}^{\memory}$ are similar regardless of the actual value of the dynamics. This means that the vertices from the solution of \ref{eq:SPA1} should not be too far apart from the domain of the dynamics but rather chosen as tight as possible. Further, the dynamics need to be sufficiently smooth for mSPA to generate accurate predictions, which clearly is the case for the majority of dynamical modelling methods. Suppose that two sequences $[\datapoint_{t-1},\dots,\datapoint_{t-\memory}],[\datapoint_{\tilde{t}-1},\dots,\datapoint_{\tilde{t}-\memory}]$ are close to each other so that also $\psi^\memory_{t-1}$ and $\psi^\memory_{\tilde{t-1}}$ are similar to each other. Suppose the subsequent values $\datapoint_t$ and $\datapoint_{\tilde{t}}$ are far apart, as possible in, e.g., chaotic dynamics. Let us assume that only the $i$th entry of the path affiliations is considerably high (this is a rare case but serves an illustrative purpose here; it can, however, happen, if the points in the two sequences are close to a vertex). The mSPA prediction then cannot cover both cases -- the behaviour of the dynamics at times $t$ and $\tilde{t}$ -- since it is approximately given by the constant vector $\Lambda_{|i}$. Of course, generally multiple entries of $\psi^\memory$ will be of high influence so that $\hat{\Lambda}^\memory$ can be chosen in the best possible way to balance the different cases. However, it seems advisable if the vertices are not placed close to bifurcation points of the dynamics because there the scenario explained above can happen: barycentric coordinates and therefore path affiliations can be similar while subsequent values of the dynamics are very different from each other.\\

Although one could perceive the mSPA formulation of dynamics as a mere coordinate change, it proved challenging to find an analytical representation even of linear systems in the mSPA form of $\gamma_t = \hat{\Lambda}^\memory \psi^\memory(\gamma_{t-1},\dots,\gamma_{t-\memory})$ with or without memory. It seems that mSPA is \textit{not} a method that exactly represents simple systems through coordinate changes. Rather, it is an \textit{approximative} method -- with a very specific model form -- to forecast systems of varying complexity, as the examples have shown.

Together with our findings on the Lorenz-96 system, the KS equation, the Chua circuit and investigations on simple harmonic oscillators (not shown in the paper), we derived the following intuitions to generally hold true. To obtain high quality using mSPA:
\begin{itemize}
    \item Choose the polytope from the \ref{eq:SPA1} problem and scale it such that the trajectory of the barycentric coordinates is as close as possible to the boundary of its domain, the $K$-vertexed unit simplex, without touching it.
    
    \item Choose the memory time step (see Appendix~\ref{sec:App_chua}) so that the image of the delay-coordinate map using this memory lag has similar geometry as the original attractor. Further, it seemed that typically, if each sequence of length $\memory$ in the full space data has at least one point that is close to or inside the polytope, mSPA worked well.
    
    \item Choose the memory depth as high as needed to obtain sufficient quality of the model. Clearly, this is constrained by the exponential growth of the term~$K^\memory$.
\end{itemize}

\FloatBarrier

\section*{Conclusion}

We have introduced a new data-driven method to model nonlinear dynamical systems. For this method, we project data to a lower-dimensional polytope using the novel SPA method. We then model the dynamics on the reduced barycentric coordinates in the polytope, thereby creating a stable dynamical system. To reconstruct the information of the data that is lost by the projection, we use memory of the barycentric coordinates. We defined the path affiliation function which measures the closeness of a sequence of points to each possible sequence of vertices of the polytope and construct a linear column-stochastic matrix that pushes the dynamics forward in time. Therefore, the method does not require prior intuition of the dominant structures behind the dynamics but instead uses a fixed observable.

We showed that (under standard assumptions) there exists a dynamical system on the path affiliations that is topologically conjugate to the original one using a variant of Takens' Theorem. We further showed in several examples that our method can accurately reconstruct nonlinear dynamical systems of varying dimension and complexity.

In the future, several issues should be addressed: firstly, as pointed out before, Takens gives a potentially nonlinear function between path affiliations and subsequent barycentric coordinates which we approximate by a linear mapping. As briefly discussed earlier, one could replace the linear mSPA propagator by a suitable nonlinear function, e.g., as done in this article using a neural network whose output always gives stochastic vectors. The examples in this article have demonstrated that this could be a worthwhile endeavor. Further, the computation of the linear mSPA propagator could need simplification since the dimension of the path affiliation vectors grows exponentially with memory depth.
Moreover, we observed that the choice of the polytope could strongly influence the quality of the ensuing mSPA reconstruction. The \ref{eq:SPA1} problem produces a polytope which has minimal total distance to the data. It is, however, unclear at this point, if a polytope can be chosen so that the estimation of dynamics becomes easier with mSPA, except for our observation at the end of the previous section. Potentially, the \ref{eq:SPA1} problem  could be modified, e.g., regularized, in a way to produce a polytope that is optimal also to capture the dynamics. Additionally, mSPA could be extended for dynamical systems with control input. For this aim, one could solve the \ref{eq:SPA1} problem for the control input space, too, translate inputs to barycentric coordinates and formulate an augmented mSPA problem.

\section*{Acknowledgements}

We thank Illia Horenko from USI Lugano for helpful discussions regarding SPA and its variants and Raphael Gerlach from University of Paderborn for the helpful discussions on the numerical solution of the Kuramoto--Sivashinsky equation and provision of the Matlab code.
PK, NW and CS have been partially supported by Deutsche Forschungsgemeinschaft (DFG) through grant CRC 1114 ``Scaling Cascades in Complex Systems'', Project Number 235221301, Projects A01 and B03, and under Germany’s Excellence Strategy -- The Berlin Mathematics Research Center MATH+ (EXC-2046/1 project ID:  390685689). VS was funded by the Berlin Institute for the Foundations of Learning and Data (BIFOLD) (Project BIFOLD-BZML 01IS18037H).
\appendix
\section{Properties of SPA and memory SPA}

\subsection{Technical details of $\rho_\Sigma$ for the case if $K \leq \dimD$}
\label{sec:App_rhosigma}
Note that in the case $K \leq D$, $\mathcal{M}_{\Sigma}$ can be assumed to be a simplex.

\begin{lemma}
Let $\Sigma = [\sigma_1,\dots,\sigma_D] \in \R^{\dimD \times K}$ denote the vertices of a non-simplex polytope $\mathcal{M}_{\Sigma}$. Then there exists a simplex with vertices $\tilde{\Sigma} \neq \Sigma$ so that for points $[\datapoint_1\vert\cdots\vert\datapoint_T] \in \R^{\dimD}$, there exist barycentric coordinates $[\gamma_1\vert\cdots\vert\gamma_T] \in \R^K$ so that $\Vert [\datapoint_{1} \vert \cdots \vert\datapoint_\timeT] - \tilde{\Sigma} [\gamma_1\vert\cdots\vert\gamma_T] \Vert_F \leq \Vert [\datapoint_{1} \vert \cdots \vert\datapoint_\timeT] - \Sigma [\gamma_1\vert\cdots\vert\gamma_T] \Vert_F$.
\label{pro:rhosimplex}
\end{lemma}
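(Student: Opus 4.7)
The plan is to reduce the claimed inequality to a single geometric containment statement: if we can build a simplex $\mathcal{M}_{\tilde{\Sigma}}$ whose convex hull \emph{contains} the original polytope $\mathcal{M}_{\Sigma}$, then for each data point $\datapoint_t$ I would take $\tilde{\gamma}_t$ to be the barycentric coordinates of the orthogonal projection of $\datapoint_t$ onto $\mathcal{M}_{\tilde{\Sigma}}$, and collect them into $\tilde{\Gamma}$. Because $\tilde{\gamma}_t$ is a stochastic vector, $\Sigma\tilde{\gamma}_t$ is a convex combination of $\sigma_1,\ldots,\sigma_K$ and hence lies in $\mathcal{M}_\Sigma \subseteq \mathcal{M}_{\tilde{\Sigma}}$. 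Consequently $\|\datapoint_t-\Sigma\tilde{\gamma}_t\|_2 \ge \mathrm{dist}(\datapoint_t,\mathcal{M}_{\tilde{\Sigma}}) = \|\datapoint_t-\tilde{\Sigma}\tilde{\gamma}_t\|_2$, and squaring plus summing over $t$ yields the Frobenius inequality.

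What remains is the geometric construction. Since $\mathcal{M}_\Sigma$ is not a simplex, the vertices $\sigma_1,\ldots,\sigma_K$ are affinely dependent, so their affine hull $A$ has dimension $r \le K-2$. The hypothesis $K\le \dimD$ gives $\dim A^\perp = \dimD - r \ge K - 1 - r$, leaving enough directions to ``lift out'' of $A$. I would construct $\tilde{\Sigma}$ as follows: inside $A$, pick vertices $\tilde{\sigma}_1,\ldots,\tilde{\sigma}_{r+1}$ of a non-degenerate $r$-simplex $S_0\subset A$ large enough to enclose $\mathcal{M}_\Sigma$ (any polytope in $A\cong\R^r$ can be inscribed in such a simplex, e.g.\ a regular one centered at the centroid of $\mathcal{M}_\Sigma$ with circumradius exceeding $\mathrm{diam}(\mathcal{M}_\Sigma)$); then pick orthonormal $v_1,\ldots,v_{K-1-r}\in A^\perp$, fix a point $c\in S_0$, and set $\tilde{\sigma}_{r+1+j} := c+\mu v_j$ for $j=1,\ldots,K-1-r$ and any $\mu>0$.

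Affine independence of $\tilde{\sigma}_1,\ldots,\tilde{\sigma}_K$ is easy to verify by splitting any putative affine dependence into its $A$- and $A^\perp$-components: the $A^\perp$-part forces $\alpha_{r+1+j}=0$ via linear independence of the $v_j$, after which the $A$-part reduces to an affine dependence among the vertices of $S_0$, which is impossible. Hence $\mathcal{M}_{\tilde{\Sigma}}$ is a $(K-1)$-simplex, and containment is immediate because $S_0$ is precisely the face of $\mathcal{M}_{\tilde{\Sigma}}$ spanned by $\tilde{\sigma}_1,\ldots,\tilde{\sigma}_{r+1}$, so $\mathcal{M}_\Sigma \subseteq S_0 \subseteq \mathcal{M}_{\tilde{\Sigma}}$. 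The condition $\tilde{\Sigma}\neq\Sigma$ is automatic, since the new apex vertices lie outside $A$ while every $\sigma_k$ lies in $A$.

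The main obstacle I anticipate is expressing the enclosing simplex $S_0 \subset A$ cleanly without invoking heavy machinery; everything else is bookkeeping on affine combinations. A concrete ``scaled regular simplex centered at the centroid'' construction sidesteps this neatly, so I would present the proof as: (i)~reduce to containment via the projection argument, (ii)~exhibit $\tilde{\Sigma}$ explicitly by the lift-out-of-$A$ construction, (iii)~verify affine independence and containment by the $A$/$A^\perp$ decomposition.
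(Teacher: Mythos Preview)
Your proposal is correct and follows essentially the same strategy as the paper: observe that a non-simplex $K$-vertex polytope lies in an affine subspace of dimension at most $K-2$, enclose it in a simplex within that subspace, and then use the remaining vertex budget (available since $K\le D$) to lift to a full $(K-1)$-simplex; the approximation inequality then follows from the containment $\mathcal{M}_\Sigma\subseteq\mathcal{M}_{\tilde{\Sigma}}$. Your write-up is in fact more careful than the paper's short sketch---you handle the general case $r\le K-2$ (not just $r=K-2$), verify affine independence explicitly, and your projection argument neatly produces a single $\Gamma$ that works simultaneously on both sides of the inequality, which the paper's proof does not spell out.
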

\begin{proof}
For the proof, we simply observe that every polytope $\mathcal{M}_{\Sigma}$ with $K$ vertices that is not a simplex is at most $(K-2)$-dimensional. We can then use $K-1$ vertices to construct a simplex which contains $\mathcal{M}_{\Sigma}$ which already proofs the proposition. Since $K\leq \dimD$, we are then left with one vertex we can use to increase the dimension of the polytope by constructing a $(K-1)$-dimensional simplex.
\end{proof}
For example, assume $K = 4$ and $\dimD \geq 4$. Then if $\mathcal{M}_{\Sigma}$ were not a ($3$-dimensional) simplex, it would at most be 2-dimensional, given by a polygon with $4$ vertices. However, for such a polygon, we could construct a triangle which contains this polygon while we could use the fourth vertex to reduce the projection error by increasing the polytope to a third dimension. As a consequence, solutions of $\eqref{eq:SPA1}$ will typically form a $(K-1)$-dimensional simplex (if not, one could add a constraint to \eqref{eq:SPA1}). We therefore always assume $\mathcal{M}_{\Sigma}$ to be a simplex whenever $K \leq D$.

With this, the function $\rho_\Sigma$ from Eq.~\eqref{eq:rhoK<D} is well-defined:
\begin{lemma}
Let $\Sigma \in \R^{\dimD \times K}$. The function $\rho_\Sigma$ defined as in Eq.~\eqref{eq:rhoK<D} is well-defined for all $\datapoint \in \R^{\dimD}$.
\label{lem:rhowelldefined}
\end{lemma}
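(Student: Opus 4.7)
The plan is to separate well-definedness into existence and uniqueness of the minimizer, and to exploit Lemma~\ref{pro:rhosimplex} together with the standard theory of projections onto closed convex sets.

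First I would observe that the feasible set in~\eqref{eq:rhoK<D}, namely the standard simplex $\Delta^{K-1} = \{\gamma \in \R^K : \gamma_\bullet \geq 0,\ \Vert \gamma \Vert_1 = 1\}$, is compact in $\R^K$, and that the objective $\gamma^* \mapsto \Vert \datapoint - \Sigma \gamma^* \Vert_2$ is continuous. Hence by the Weierstrass extreme value theorem a minimizer exists for every $\datapoint \in \R^\dimD$, so the $\argmin$ set is non-empty.

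The core of the argument is then uniqueness. By the convention justified in Lemma~\ref{pro:rhosimplex}, $\mathcal{M}_\Sigma$ may be assumed to be a $(K-1)$-dimensional simplex, which is exactly the statement that the columns $\sigma_1,\dots,\sigma_K$ of $\Sigma$ are affinely independent. Consequently the affine map $\gamma \mapsto \Sigma \gamma$ is \emph{injective} on $\Delta^{K-1}$, and its image $\mathcal{M}_\Sigma = \Sigma(\Delta^{K-1})$ is compact and convex in $\R^\dimD$ as the affine image of a compact convex set. The orthogonal projection of $\datapoint$ onto the non-empty closed convex set $\mathcal{M}_\Sigma$ is unique by the classical Hilbert-space projection theorem, producing a single point $y^\star \in \mathcal{M}_\Sigma$. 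Pulling this back through $\Sigma$ restricted to $\Delta^{K-1}$, the affine injectivity noted above yields a unique $\gamma^\star \in \Delta^{K-1}$ with $\Sigma \gamma^\star = y^\star$, which is therefore the unique minimizer in~\eqref{eq:rhoK<D}.

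The only subtle point, and the step I would take care to justify explicitly, is the reduction to the simplex case: without it, two distinct barycentric coordinates could represent the same point on $\mathcal{M}_\Sigma$ and uniqueness would fail, which is precisely what forces the Case~2 definition~\eqref{eq:rhoK>D}. Given Lemma~\ref{pro:rhosimplex}, this is essentially automatic, so the proof should be short: one paragraph combining compactness of $\Delta^{K-1}$ with continuity for existence, and one paragraph combining affine injectivity of $\Sigma$ on $\Delta^{K-1}$ with uniqueness of projection onto a closed convex set for uniqueness.
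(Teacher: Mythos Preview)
Your proposal is correct and follows essentially the same route as the paper: identify $\Sigma\rho_\Sigma(x)$ as the orthogonal projection of $x$ onto the closed convex set $\mathcal{M}_\Sigma$ (which gives existence and uniqueness of the image point), and then invoke the simplex assumption from Lemma~\ref{pro:rhosimplex} to obtain uniqueness of the barycentric coordinates. The only cosmetic difference is that you separate out existence via Weierstrass on the compact simplex $\Delta^{K-1}$, whereas the paper bundles existence and uniqueness of the projected point into the single statement that orthogonal projection onto a closed convex set is well-defined.
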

\begin{proof}
By definition, the term $\Sigma \rho_\Sigma(\datapoint)$  is the closest point to $\datapoint$ in $\mathcal{M}_{\Sigma}$, so that it is the orthogonal projection of $\datapoint$ onto $\mathcal{M}_{\Sigma}$. This is always well-defined in Euclidean space. The question now is whether there exists $\tilde{\gamma}$ so that $\Sigma \gamma = \Sigma \tilde{\gamma}$. Since $\mathcal{M}_{\Sigma}$ is a simplex, the barycentric coordinates of the projection of $\datapoint$ onto $\mathcal{M}_{\Sigma}$ are unique~\cite{guessab}.
\end{proof}

$\rho_\Sigma$ can be seen as a linear coordinate transform of an orthogonal projection of points into $\R^\dimD$ onto a $K$-vertexed unit simplex whose elements are barycentric coordinates.

The definition of $\rho_\Sigma$ in Eq.~\eqref{eq:rhoK<D} for the case $K \leq \dimD$ is equivalent with the one in Eq.~\eqref{eq:rhoK>D}, since the set of minimizers of $\Vert \datapoint - \Sigma \gamma \Vert_2$ contains only one element. We could equivalently write it as $\rho_\Sigma(\datapoint,\bullet)$ and use the definition from Eq.~\eqref{eq:rhoK>D}.

\subsection{Linear approximations of nonlinear dynamics are typically insufficient to predict long-term behaviour}
\label{sec:App_SPAfixedpoints}
With \eqref{eq:SPA2}, we find a matrix $\Lambda$ that propagates projection coordinates linearly over time. The linearity plus the fact, that $\Lambda$ is a column-stochastic matrix, give severe restrictions on the capacities regarding long-term behaviour of the method: By the Perron-Frobenius Theorem~\cite{meyer}, a full-rank column-stochastic matrix must have eigenvalues $\lambda_i$ with $|\lambda_i| \leq 1$ and at least (and generally exactly) one real eigenvalue $\lambda_1 = 1$. Let us further assume no complex eigenvalues with $|\lambda_i| = 1$. We can write subsequent application of $\Lambda$ using its eigenvectors: Let $u_1,\dots,u_\clus$ be the right eigenvectors of $\Lambda$. Then they form a basis of $\R^\clus$. Let $\gamma_t$ have a representation with respect to these eigenvectors given by
\begin{equation*}
    \gamma_t = \sum\limits_{k=1}^\clus a_k u_k
\end{equation*}
where the $a_k$ are scalar-valued coefficients.
Then
\begin{equation*}
    \Lambda^i \gamma_t = \sum\limits_{k=1}^\clus a_k \Lambda^i u_k = \sum\limits_{k=1}^\clus a_k \lambda_k^i u_k.
\end{equation*}
All $\lambda_k$ where $|\lambda_j| < 1$ will vanish with increasing $i$, so that the sum converges to
\begin{equation*}
    \gamma^{\star} = a_1 u_1
\end{equation*}
(or, more generally, to $\sum\limits_{k:\lambda_k = 1} a_k u_k$).
Thus, in the long-term, a trajectory of barycentric coordinates created using the linear model must converge to a fixed point. Hence, also in the original state space a long-term prediction must converge to the fixed point $\datapoint^{\star} = \Smatrix \gamma^{\star}$.

If there exists complex eigenvalues with absolute value $1$, at least oscillating, but generally not truly linear dynamics can be constructed with a linear model.

\subsection{Proof that path affiliations are stochastic vectors}
\label{sec:App_pathaffStoch}
Let $\gamma_t,\dots, \gamma_{t-\memory+1}$ be stochastic vectors, i.e. all entries are at least $0$ and sum up to $1$. Then we can prove by induction on $\memory$ that the path affiliation vectors constructed by
\begin{equation*}
\begin{split}
\memop^\memory (\gamma_{t-1},\dots,\gamma_{t-\memory})_i &=  \gamma^{i_1}_{ t} \gamma^{i_2}_{t-1} \cdots \gamma^{i_\memory}_{t-\memory+1}\\
J(i) &= [i_1,\dots,i_\memory]
\end{split}
\end{equation*}
are stochastic vectors, too. $J$ again is an ordering of the tuples in $\lbrace 1,\dots,K\rbrace^\memory$.

Let $u,v$ be stochastic vectors of potentially different dimensions. Then
\begin{equation}
   \sum_{i} \Psi^2(u,v) = \sum\limits_{i_1, i_2} u_{i_1} v_{i_2} = \sum\limits_{i_1} u_{i_1} \sum\limits_{i_2} v_{i_2} = \sum\limits_{i_1} u_{i_1} = 1,
    \label{eq:pathAffilStoch}
\end{equation}
so that also
\begin{equation*}
    \sum\limits_{J(i) = [i_1, i_2]} \memop^2(\gamma_{t-1},\gamma_{t-2})_i =\sum\limits_{i_1, i_2} \gamma^{i_1}_{t-1} \gamma^{i_2}_{t-2} = \sum\limits_{i_1} \gamma^{i_1}_{t-1} \sum\limits_{i_2} \gamma^{i_2}_{t-2} = \sum\limits_{i_1} \gamma^{i_1}_{t-1} = 1.
\end{equation*}
For every $\memory > 1$, it holds

\begin{equation*}
   \memop^{\memory+1}(\gamma_{t-1},\dots,\gamma_{t-\memory},\gamma_{t-\memory-1}) =  \memop^2( \memop^\memory(\gamma_{t-1},\dots,\gamma_{t-\memory}), \gamma_{t-\memory-1})
\end{equation*}
(at least up to permutation of the entries). By the induction hypothesis that $\memop^\memory(\gamma_{t-1},\dots,\gamma_{t-\memory})$ is a stochastic vector, it follows from the induction start in Eq.~\eqref{eq:pathAffilStoch} that also $\memop^{\memory+1}( \memop^\memory(\gamma_{t-1},\dots,\gamma_{t-\memory}), \gamma_{t-\memory-1})$ is a stochastic vector.

\subsection{Proof that the image of the path affiliation function is a polytope}
\label{sec:App_psiimage}
Since each point of the form $\memop^\memory(\gamma_1,\dots,\gamma_\memory)$ is a $K^\memory$-dimensional stochastic vector, it lies inside a unit simplex with $K^\memory$ vertices. These vertices are the $K^\memory$-dimensional unit vectors. It remains to show that each point in such a polytope can be constructed by application of $\memop^\memory$ to a point $[\gamma_1,\dots,\gamma_\memory] \in \mathcal{M}_\Sigma^\memory$.

Let $[u_1,\dots,u_{K^\memory}] \in \mathcal{M}_\Sigma^\memory$. We then demand existence of $\gamma_1,\dots,\gamma_\memory$ so that
\begin{equation*}
    \begin{bmatrix}
    u_1\\
    \vdots\\
    u_{K^\memory}
    \end{bmatrix}
    =
    \begin{bmatrix}
    \gamma_1^1 \cdots \gamma_\memory^1\\
    \vdots\\
    \gamma_1^K \cdots \gamma_\memory^K
    \end{bmatrix}.
\end{equation*}
For this to hold requires several relations between the entries of $\gamma_1,\dots,\gamma_\memory$: The first entry of the above equation yields the value for $\gamma_1^1$ as
\begin{align*}
    \gamma_2^1\cdots\gamma_\memory^1 = \frac{u_1}{\gamma_1^1}, \quad \gamma_2^1\cdots \gamma_\memory^2 = \frac{u_2}{\gamma_1^1}, \quad \dots \quad \gamma_2^K\cdots \gamma_\memory^K = \frac{u_{K^{\memory-1}}}{\gamma_1^1}\\
    \Rightarrow \gamma_1^1 (\underbrace{\gamma_2^1\cdots\gamma_\memory^1  + \dots +\gamma_2^K\cdots \gamma_\memory^K}_{= \sum_j \Psi^{\memory-1}(\gamma_2,\dots,\gamma_\memory)_j = 1}) = u_1+ \dots +  u_{K^{\memory-1}}.
\end{align*}
This can be done analogously for all entries of $\gamma_1,\dots,\gamma_\memory$. We see that for all entries of $\gamma_1$, the terms needed to construct it are disjoint (for $\gamma_1^1$, $u_1,\dots,u_{K^{\memory-1}}$ are required, for $\gamma_1^2$ one needs $u_{K^{\memory-1}+1},\dots,u_{2K^{\memory-1}}$,...). Plus, by definition, the entries of $u$ sum to one so that this holds for $\gamma_1$ as desired. Analogously this can be observed for $\gamma_2,\dots,\gamma_\memory$, where again the entries from $u$ needed are disjoint between the entries of a $\gamma_i$.

In total, we can see that for each vector $u$ in the path affiliation polytope, we can directly construct $\gamma_1,\dots,\gamma_\memory \in \mathcal{M}_{\Sigma}$ so that the path affiliation function applied to them gives $u$. With this, the path affiliation function $\memop^\memory$ is surjective into the unit simplex with $K^\memory$ vertices. Since the entries of $\Psi^\memory$ always sum up to $1$ (see~\ref{sec:App_pathaffStoch}), no other points lie in its image.

\subsection{Proof that the path affiliation function $\memop^\memory$ is injective (Proposition~\ref{pro:psiinjective})}
\label{sec:App_psiinjective}
Let $\memop^\memory$ be defined as in Eq.~\eqref{eq:PSI_def} respectively Eq.~\eqref{eq:PSI_def_outerproduct} on all points $\gamma \in \R^K$ which fulfill $\sum_{i=1}^K \gamma^i = 1$ and $\gamma^i \geq 0$. The concatenation of injective functions is injective again, so it suffices to show that one outer product as defined in Eq.~\eqref{eq:PSI_def_outerproduct} is injective on the set of the barycentric coordinates:

Let $u,\tilde{u} \in \R^n$ and $v,\tilde{v} \in \R^m$ be given. We can observe that $u \otimes v = \tilde{u} \otimes \tilde{v}$ implies $\tilde{u}_1 = (u_1 v_1) / \tilde{v}_1$. Using this we can see that the demand $u_1 v_i = \tilde{u}_1 \tilde{v}_i$ yields for all $i = 2,\dots,K$
\begin{equation*}
     u_1 v_i = \tilde{u}_1 \tilde{v}_i = (u_1 v_1) \tilde{v}_i / \tilde{v}_1 \text{ so that } \tilde{v}_i = v_i (\tilde{v}_1 / v_1).
\end{equation*}
As a consequence, $\tilde{v}$ must be a scaled version of $v$ with an arbitrary scaling constant given by $\tilde{v}_1 / v_1$. This implies that $\tilde{u}$ can only be a scaled version of $u$. Taking into account the constraints on all vectors in the context of barycentric coordinates -- their entries must sum to $1$ -- , this means that $u = \tilde{u}$ and $v = \tilde{v}$.

Thus, $\memop^\memory$ is injective. This yields that the inverse of $\memop^\memory$ exists on its image.

\subsection{Proof of Proposition~\ref{pro:mSPAvsSPA2}}
\label{sec:AppProofLemma}
\begin{proof}
Let $\Lambda$ be a minimizer of
\begin{equation*}
\begin{split}
\Lambda &= \argmin_{\Lambda^*} \Vert [\gamma_2 \vert \cdots \vert\gamma_\timeT] - \Lambda^* [\gamma_1 \vert \cdots\vert\gamma_{\timeT-1}] \Vert_F, \\
\text{subject to,} & \\
& \Lambda \geq 0 \text{ and } \sum_{k=1}^K \Lambda_{k,\bullet} = 1. 
\end{split}
\end{equation*}
Choose $\memory \geq 1$. Then for all paths $i_1,\dots,i_\memory$, construct $\hat{\Lambda}^\memory \in \R^{K\times K^\memory}$ by setting $\hat{\Lambda}^\memory_{ji} = \Lambda_{j i_1}$ with $i$ denoting the index in the vector of path affiliations $\psi_t$ corresponding to the path $i_1,\dots,i_\memory$. In other words: for each sequence of $\memory$ indices, we simply set $\hat{\Lambda}^\memory_{ij}$ to the value in the $j$th column of $\Lambda$ that corresponds to the first (memoryless) entry of this sequence. Then since
\begin{equation*}
\begin{split}
    &\sum\limits_{i_2,\dots,i_\memory} \gamma_{t}^{i_1} \gamma_{t-1}^{i_2} \cdots \gamma_{t-\memory+1}^{i_\memory}\\
    =&\gamma_{t}^{i_1} \sum\limits_{i_2,\dots,i_\memory}  \gamma_{t-1}^{i_2} \cdots \gamma_{t-\memory+1}^{i_2} = \gamma_{t}^{i_1},
    \end{split}
\end{equation*}
it holds that
\begin{equation*}
\begin{split}
 (\hat{\Lambda}^\memory\psi^\memory_t)_j =& \sum\limits_{J(i) = [i_1,\dots,i_\memory]} \hat{\Lambda}^\memory_{ji} (\psi^\memory_t)_i\\
 =&  \sum\limits_{J(i) = [i_1,\dots,i_\memory]} \hat{\Lambda}^\memory_{ji} \gamma_{t}^{i_1} \gamma_{t-1}^{i_2} \dots \gamma_{t-\memory+1}^{i_\memory}\\
  =& \sum\limits_{i_1} \Lambda_{ji_1} \gamma_{t}^{i_1} \sum\limits_{i_2,\dots,i_\memory}\gamma_{t}^{i_1} \gamma_{t-1}^{i_2}  \cdots \gamma_{t-\memory+1}^{i_\memory}\\
  =& \sum\limits_{i_1}\Lambda_{j i_1} \gamma_{t}^{i_1} = (\Lambda \gamma_t)_j.
    \end{split}
\end{equation*}
Hence, a $\hat{\Lambda}^\memory$ constructed in this way for mSPA is equivalent to applying a solution of the standard \ref{eq:SPA2} problem. As a consequence, the training error of mSPA is always bounded from above by the training error of the standard \ref{eq:SPA2}.
\end{proof}

\subsection{Choice of $E$ in matrix $\Theta^\memory$}
\label{sec:App_choiceE}
The task is to extract the terms $\gamma_{t-1},\dots,\gamma_{t-\memory+1}$ from $\memop^\memory(\gamma_{t-1},\dots,\gamma_{t-\memory})$. For this aim, we need to realize that by summing over all entries of $\psi^\memory_{t-1}$ which contain a certain factor $\gamma_{t-k}^l$, we obtain $\gamma_{t-k}^l$ because the sum of all crossproducts (in the sense of the path affiliations) of two stochastic vectors is $1$.

Now let $\hat{\gamma}$ be a vector of the form
\begin{equation*}
    \hat{\gamma} = \begin{pmatrix}
    \gamma_{\memory}\\
    \vdots\\
    \gamma_{1}
    \end{pmatrix} = \begin{pmatrix}
    \hat{\gamma}_{1}\\
    \vdots\\
    \hat{\gamma}_{K\memory}
    \end{pmatrix} \in \R^{K\memory}
\end{equation*}
where all $\gamma_i$ are stochastic vectors in $\R^K$. We want a $(\memory-1)K \times K^\memory$ matrix $E$ that fulfills
\begin{equation*}
    E\memop^\memory(\gamma_{t-1},\dots,\gamma_{t-\memory}) = \begin{pmatrix}
    \gamma_{t-1}\\
    \vdots\\
    \gamma_{t-\memory+1}
    \end{pmatrix}
    \in \R^{(\memory-1)K}.
\end{equation*}
Let
\begin{equation*}
    E_{ij} =
    \begin{cases}
        1 & \text{if } (j \in i)\; (\star)  \\
        0 & \, \text{else}
    \end{cases}
\end{equation*}
By $(\star)$ we mean that $\hat{\gamma}_i$ is one of the factors in $\Psi(\hat{\gamma})_j$.

Let for an arbitrary $i$ $\hat{\gamma}_i = \gamma_{t-k}^l$. Then
\begin{equation*}
\begin{split}
    (E \memop^\memory(\gamma_{t-1},\dots,\gamma_{t-\memory}))_i = &\sum\limits_{j = 1}^{K^\memory} E_{ij} \memop^\memory(\gamma_{t-1},\dots,\gamma_{t-\memory})_i\\
    =& \sum\limits_{J(i) = [i_1,\dots,i_\memory], i_k = l} \gamma_{t-1}^{i_1}\cdots \gamma_{t-\memory}^{i_\memory}\\
    =& \gamma_{t-k}^l \underbrace{\sum\limits_{i_1,\dots,i_\memory} \gamma_{t-1}^{i_1} \cdots ^\star{\gamma}_{t-k}^l \cdots \gamma_{t-\memory}^{i_\memory}}_{=1} = \gamma_{t-k}^l
    \end{split}
\end{equation*}
where $^\star{\gamma}_{t-k}^l$ means that it is omitted in the product. $E$ uses the fact that summing over all products of terms in which one specific term $\gamma_{t-k}^l$ occurs is equal to $\gamma_{t-k}^l$. With this, this choice of $E$ fulfills the demand we set on it.

\subsection{The embedded dynamics are quadratic}
\label{sec:App_quad}

As the memory depth $\memory$ is fixed, we will omit it in the notation of the path affiliations. Thus, we write~$\psi_{t-1}$ and enumerate its elements by multiindices,
\[
\psi_{t-1}^{i_1\ldots i_\memory} = \gamma_{t-1}^{i_1} \gamma_{t-2}^{i_2} \cdots \gamma_{t-\memory}^{i_\memory}, \quad i_1,\ldots,i_M \in \{1,\ldots,K\}.
\]
Similarly, we enumerate the columns of $\hat{\Lambda}^\memory$ by multiindices, and we can rephrase \eqref{eq:mSPAsystem} to
\[
\gamma_t^i = \sum_{j_1,\ldots,j_\memory=1}^K \hat{\Lambda}^\memory_{i, j_1\ldots j_\memory} \, \psi_{t-1}^{j_1\ldots j_\memory}. 
\]
We now note that for arbitrary $i_1,\ldots,i_{\memory-1} \in \{1,\ldots,K\}$,
\[
\gamma_{t-1}^{i_1} \cdot \ldots \cdot \gamma_{t-\memory+1}^{i_{\memory-1}} = \sum_{j=1}^K \gamma_{t-1}^{i_1} \cdot \ldots \cdot \gamma_{t-\memory+1}^{i_{\memory-1}} \cdot \gamma_{t-\memory}^j = \sum_{j=1}^K \psi_{t-1}^{i_1 \ldots i_{\memory-1}\, j}.
\]
It then follows by the above that
\begin{align*}
    \psi_t^{i\, i_1\ldots i_{\memory-1}} &= \gamma_t^i \gamma_{t-1}^{i_1} \cdot \ldots \cdot \gamma_{t-\memory+1}^{i_{\memory-1}} \\
    &= \gamma_{t-1}^{i_1} \cdot \ldots \cdot \gamma_{t-\memory+1}^{i_{\memory-1}} \sum_{j_1,\ldots,j_\memory=1}^K \hat{\Lambda}^\memory_{i, j_1\ldots j_\memory} \psi_{t-1}^{j_1\ldots j_\memory}\\
    &= \sum_{j=1}^K \psi_{t-1}^{i_1 \ldots i_{\memory-1}\,j} \sum_{j_1,\ldots,j_\memory=1}^K \hat{\Lambda}^\memory_{i, j_1\ldots j_\memory} \psi_{t-1}^{j_1\ldots j_\memory} \\
    &= \sum_{j,j_1,\ldots,j_\memory=1}^K \hat{\Lambda}^\memory_{i, j_1\ldots j_\memory} \psi_{t-1}^{j_1\ldots j_\memory} \psi_{t-1}^{i_1 \ldots i_{\memory-1}\,j}\,. 
\end{align*}
In summary, the mapping $\psi_{t-1} \mapsto \psi_t$ is componentwise a quadratic polynomial.

\subsection{Numerical properties of mSPA}
\begin{remark}
The numerical complexity of \eqref{eq:SPA1} scales cubically with $K$ and linearly with $\dimD$ and $T$. Detailed information on the numerical and memory complexity of solving \eqref{eq:SPA1} can be found in \cite{spaPaper}. The \eqref{eq:SPA2} problem is solved by reformulating it into a quadratic programming problem which is then solved by a Spectral Projected Gradient Algorithm~\cite{birgin} (see Matlab code in \url{https://github.com/SusanneGerber/SPA}).
\end{remark}

\begin{remark}
For all examples, we normalized the data by a linear shifting and scaling so that in each coordinate, the minimum and maximum values were $-1$ and $+1$ to give equal weight to all coordinates.
\end{remark}

\begin{remark}
When implementing Algorithm \ref{alg:mSPAscheme}, we suggest to normalize the values of $\gamma_t$ in each step so that the entries sum up to $1$. This is the case theoretically but numerical errors can have severe effects on the simulation of the dynamics.
\end{remark}

\section{Takens' Theorem and definitions needed for the extension Theorem~\ref{thm:robinsonTakens}}
\label{sec:App_Takens}
The original delay embedding Theorem of Takens reads
\begin{theorem}[Takens, 1981~\cite{takens}]
Let $\mathcal{M}$ be a smooth, compact, $d$-dimensional manifold. The set of pairs $(\textup{\gener},\textup{\observ})$ for which the delay embedding $\Phi$ is an embedding is generic in $\mathcal{D}^r(\mathcal{M})\times \mathcal{C}^r(\mathcal{M},\R)$ if $\memory > 2d$ for $r \geq 2$.
\label{thm:takens}
\end{theorem}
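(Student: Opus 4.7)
The plan is to show that the set of pairs $(\gener, \observ)$ for which $\Phi$ fails to be an embedding is contained in a countable union of nowhere-dense subsets of $\mathcal{D}^r(\mathcal{M}) \times \mathcal{C}^r(\mathcal{M}, \R)$, so that its complement is residual (generic in the Baire sense). Since $\mathcal{M}$ is compact, any injective immersion is automatically an embedding, so it suffices to treat the immersion property and global injectivity separately.

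First I would establish that $\Phi$ is generically an immersion, that is, $D\Phi_x$ has full rank $d$ at every $x \in \mathcal{M}$. The rows of the Jacobian are $D\observ(\gener^{-k}(x)) \cdot D\gener^{-k}(x)$ for $k = 0,\dots,\memory-1$. For fixed $\gener$, perturbing only $\observ$ in $\mathcal{C}^r$, the locus where this rank drops below $d$ is cut out by the vanishing of all $d\times d$ minors -- a condition of codimension at least $\memory - d + 1$ in the space of $1$-jets of $\observ$. Since $\memory > 2d$, Thom's jet transversality theorem yields a dense open set of $\observ$ for which this bad locus is empty.

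Next, for global injectivity, I would study the difference map $(x, y) \mapsto \Phi(x) - \Phi(y)$ on $\mathcal{M} \times \mathcal{M} \setminus \Delta$ and try to avoid $0$ in the image. Applying the parametric transversality theorem jointly in $(\gener, \observ)$, the preimage of $0$ generically has codimension $\memory$ inside a source of dimension $2d$. Since $\memory > 2d$, this preimage is generically empty, giving injectivity away from periodic structure.

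The main obstacle, and the technically delicate part, is the treatment of periodic points of $\gener$ of period $p \leq \memory$: on such orbits one has $\observ(\gener^{-kp}(x)) = \observ(x)$, so the above transversality argument collapses because perturbations of $\observ$ at nearby iterates are forced to coincide. The classical remedy is a two-step perturbation. First, perturb $\gener$ generically so that all periodic orbits of period at most $\memory$ are hyperbolic, hence isolated and finite in number. Then, at each such orbit separately, perturb $\observ$ via $r$-jet transversality on the finite fiber to simultaneously separate the orbit points in $\mathbb{R}^\memory$ and enforce the immersion condition there. A partition-of-unity patching glues these local perturbations into a global one, confirming that the good set is residual and completing the proof.
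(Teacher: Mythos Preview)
The paper does not contain a proof of this statement. Theorem~\ref{thm:takens} is stated in the appendix as a classical result attributed to Takens (1981), accompanied only by a short clarifying paragraph explaining the notation $\mathcal{D}^r(\mathcal{M})$, the meaning of ``generic'', and a remark that the case $r=1$ was later handled by Huke. There is nothing to compare your proposal against in the paper itself.

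That said, your outline follows the standard strategy of Takens' original argument: reduce ``embedding'' on a compact manifold to ``injective immersion'', obtain the immersion property by a jet-transversality/codimension count on the $1$-jets of $\observ$, obtain injectivity off the diagonal by a transversality argument on the difference map, and treat low-period periodic orbits separately after first perturbing $\gener$ so that these orbits are isolated. One point to tighten: your codimension count for the rank-drop locus is stated a bit loosely --- the key fact is that the values $D\observ(\gener^{-k}(x))$ for $k=0,\dots,\memory-1$ can be perturbed independently only when the iterates $\gener^{-k}(x)$ are pairwise distinct, which is exactly why periodic points of period $\le \memory$ must be isolated first; the immersion argument at non-periodic points and the injectivity argument at pairs with disjoint orbit segments both rely on this independence. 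With that caveat, the structure you describe is the classical one.
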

$\mathcal{D}^r(\mathcal{M})$ denotes the set of $r$-times continuously differentiable diffeomorphisms mapping from $\mathcal{M}$ to $\mathcal{M}$, i.e., the set of dynamics $\gener$ on $\mathcal{M}$. Generic means that the set of pairs of functions for which this is the case, is open and dense in the $\mathcal{C}^1$ topology of functions. Takens originally showed this for $r =  2$ but it was shown in \cite{huke} that it holds for $r = 1$.

The fact that $\Phi$ is an embedding means that topological properties such as Lyapunov exponents or nature and number of fixed points of $\gener$ are preserved.

\subsection{Prevalence}
The term prevalence defines a notion for almost every in infinite-dimensional spaces. 
\begin{definition}[Prevalent~\cite{sauer}]
A Borel subset $S$ of a normed linear space $V$ is prevalent if there is a finite-dimensional
subspace $E$ of $V$ such that for each $v \in V$, $v + e$ belongs to $S$ for Lebesgue almost every $e \in E$.
\end{definition}

\subsection{Thickness exponent}
The thickness exponent measures the degree to which subset $X$ of a Banach space $B$ can be approximated by linear subspaces of $B$.
\begin{definition}[Thickness exponent~\cite{robinson}]
Let $\epsilon_B(X, n)$ denote the minimal distance between $X$ and any $n$-dimensional linear subspace of $B$. Then
\begin{equation*}
    \tau(X,B) := \lim_{n\rightarrow \infty} \frac{-\log(n)}{\log (\epsilon_B(X,n))}
\end{equation*}
\end{definition}

In \cite{robinson}, a condition is given for which $ \tau(X,B) = 0$, i.e., $ \lim_{n\rightarrow \infty} \epsilon_B(X,n) = 0$. The condition refers to the smoothness of $X$.

\subsection{Box-counting dimension}
The box-counting dimension is a measure for the dimensionality of sets.
\begin{definition}[Box-counting dimension~\cite{robinson}]
The box-counting dimension of a compact set $A$ is defined as
\begin{equation*}
boxdim(A) = \lim\limits_{\varepsilon \rightarrow 0} \frac{-\log( N_A(\varepsilon))}{\log(\varepsilon)},
\end{equation*}
where $N_A(\varepsilon)$ is the number of cubes with edge length $\varepsilon$ needed to cover $A$.
\end{definition}
The box-counting dimension measures the speed in which the number of boxes needed to cover a set rises when decreasing their size. If $A$ is a $d$-dimensional manifold, then $N_A(\varepsilon)$ is porportional to $ \varepsilon^{-d}$ so that $boxdim(A) = d$. The box-counting dimension is generally a non-integer value for sets which are not a manifold.

\section{Technical details on numerical experiments}
\subsection{Definition of error measures}
\label{sec:App_errors}
We evaluate the forecasting capacity of the used methods with the following error metrics.
\begin{definition}[\gammaerror]
The \tgammaerror between points $\gamma, \tilde{\gamma}$ in a unit simplex $\mathcal{M}_{\Sigma}$ is defined as
\begin{equation}
    \mathcal{E}_{\Sigma}(\gamma,\tilde{\gamma}) := \Vert \gamma - \tilde{\gamma} \Vert_2.
\end{equation}
\end{definition}

\begin{definition}[\truespaceerror]
The \ttruespaceerror between points $\datapoint,\tilde{\datapoint} \in \R^\dimD$ is defined as
\begin{equation}
    \mathcal{E}_{\dimD}(\datapoint,\tilde{\datapoint}) := \Vert \datapoint - \tilde{\datapoint} \Vert_2.
\end{equation}
where $\tilde{\datapoint} = \Sigma \tilde{\gamma}$ for a $\gamma$ which comes from a prediction with SPA or mSPA.
\end{definition}

\begin{definition}[Average $k$-step \gammaerror]
For $k$-step forecasts, we create trajectories of length $k$ starting at $T$ different values $\gamma_{t_1},\dots,\gamma_{t_T}$ to generate values $\tilde{\gamma}_{t_1+k},\dots,\tilde{\gamma}_{t_T+k}$. Given reference values $\gamma_{t_1+k},\dots,\gamma_{t_T+k}$, we define the average \tgammaerror as
\begin{equation}
    \bar{\mathcal{E}}_{\Sigma} := \frac{1}{T} \sum\limits_{i=1}^T \mathcal{E}_{\Sigma}(\gamma_{t_i+k},\tilde{\gamma}_{t_i+k}).
\end{equation}
\end{definition}
Analogously for the true space we define:
\begin{definition}[Average $k$-step \truespaceerror]
For $k$-step forecasts, we create trajectories of length $k$ starting at $T$ different values $\datapoint_{t_1},\dots,\datapoint_{t_T}$ to generate values $\tilde{\datapoint}_{t_1+k},\dots,\tilde{\datapoint}_{t_T+k}$. Given reference values $\datapoint_{t_1+k},\dots,\datapoint_{t_T+k}$, we define the average \ttruespaceerror as
\begin{equation}
    \bar{\mathcal{E}}_{\dimD} := \frac{1}{T} \sum\limits_{i=1}^T \mathcal{E}_{\dimD}(\datapoint_{t_i+k},\tilde{\datapoint}_{t_i+k}).
\end{equation}
\end{definition}

\subsection{Technical details on Examples \ref{exa:KSlowdim} and \ref{exa:KSlifting}}
\label{sec:App_techsKS}
\begin{remark}
\textbf{Data division:} We divided the data into $T_{train} = 3000$ training points and $T_{test} = 1000$ test points, each separated by a time of $0.001$. We learned propagators for a time step length of $\tau = 0.01$.
\end{remark}
\begin{remark}
\textbf{Error computation in Figure~\ref{fig:KS_gammaprediction_K2_tau001}:} To measure the quality of mSPA predictions, we evaluated the forecasting error and made long-term predictions. The $k$-step forecasting error in Figure~\ref{fig:KS_gammaprediction_K2_tau001} is the average $k$-step \tgammaerror between the predicted and true values of $\gamma$ from the test data for all starting points between time steps $\memory,\dots,T_{test}-\memory$.
\end{remark}
\begin{remark}
\textbf{Basis functions in functioning SINDy model in Example~\ref{exa:KSlifting}:} The basis functions in Figure~\ref{fig:KS_allmethods_Comparison} \textbf{c.} were given by all quadratic monomials of $u^{(1)},\dots,u^{(100)}$ and the functions $\sin(k u^{(i)}), \cos(k u^{(i)})$ for $i = 1,\dots,100$, $k = 1,\dots,10$. If linear and constant terms were included, the model determined produced diverging trajectories. The same held when omitting some or all of the trigonometric functions or using monomials up to degree $3$.
\end{remark}

\begin{remark}
From two trajectories $X = [x_0,\dots,x_T]$ and $\tilde{X} = [\tilde{x}_0,\dots,\tilde{x}_T]$, we defien their Hausdorff distance as
\begin{equation}
\mathcal{H}(X,\tilde{X}) := \max( \max_{x\in {X}} \min_{\tilde{x} \in \tilde{X}} \Vert x - \tilde{x}\Vert_2, \max_{\tilde{x}\in X} \min_{x \in X} \Vert x - \tilde{x}\Vert_2 ).
\label{eq:hausdorff}
\end{equation}
\end{remark}

\section{Approximating the mapping on the path affiliations by a neural network}
\label{sec:App_NN}
In mSPA we approximate the function \textbf{v} from Eq.~\eqref{eq:memorydynamics} by a column-stochastic matrix. We further have tried to use a neural network instead to obtain a more precise approximation.

We have constructed networks of the following structure. Denote $h^{(0)} = \psi_{t-1}$. For a fixed memory depth $\memory$, let $i = 1,\dots,\memory-1$. The $i$-th layer $h^{(i)}$ is given by
\begin{equation}
    \begin{split}
        h^{(i)} &= A^{(i)} (W^{(i-1)} h^{(i-1)} + b^{(i-1)})\\
        W^{(i-1)} &\in \R^{K^{\memory-i}\times K^{\memory-i+1}}, b^{(i-1)} \in \R^{K^{\memory-i}},
    \end{split}
\end{equation}
and $A^{(i)}$ is a nonlinear activation function, chosen either as a Leaky ReLU (Rectified Linear Unit) or a Softmax function, defined as
\begin{equation}
    \text{Leaky ReLU}(\datapoint_i) = \begin{cases}
0.1, \datapoint_i \leq 0.1\\
\datapoint_i, \text{ otherwise}
\end{cases}
\quad
softmax(\datapoint)_i = \exp(\datapoint_i) / \sum_{j=1} \exp(\datapoint_j).
\end{equation}
The network thus reduces the dimension of hidden states by powers of $K$ and consists of $\memory-1$ layers, so that the last layer is $K$-dimensional. The output is defined as
\begin{equation}
    y_t = softmax(h^{(\memory-1)}),
\end{equation}
projecting it to barycentric coordinates.


We were able to get a close reconstruction of the Lorenz-96 and Chua attractors in barycentric coordinates with $\memory = 3$ and $M=5$, respectively (Figure~\ref{fig:LorenzChua_NN_Full}).
\begin{figure}[ht!]
\centering
\includegraphics[width=\textwidth]{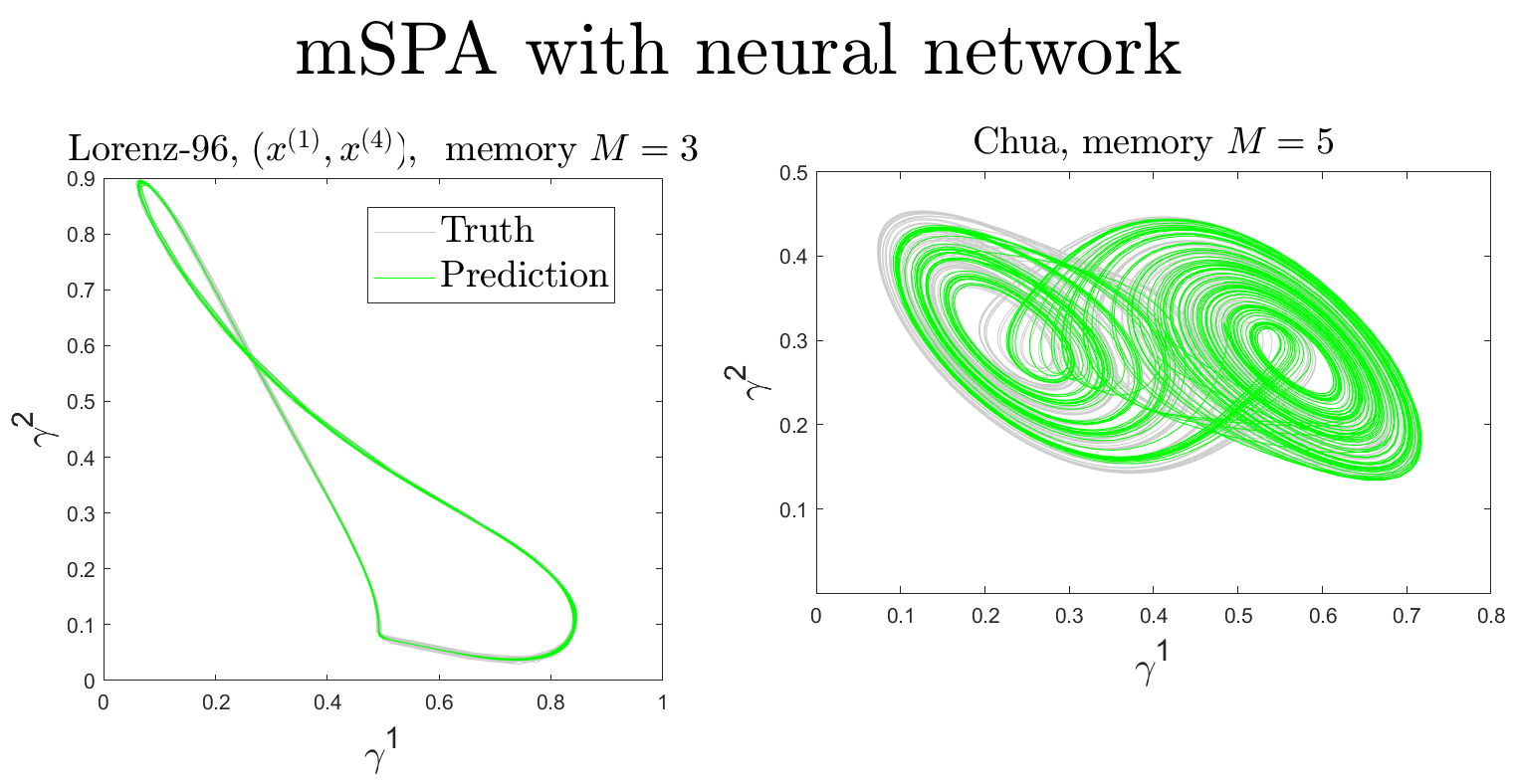}
\caption{Long-term predictions with a neural network to map from path affiliation to subsequent barycentric coordinate for Lorenz-96 (($\datapoint^{(1)},\datapoint^{(4)})$-coordinates) and the inner attractor of the Chua circuit.}
\label{fig:LorenzChua_NN_Full}
\end{figure}

\section{mSPA on the Chua circuit}
\label{sec:App_chua}
\begin{example}
\textbf{The Chua circuit} A Chua circuit is a model of an electric circuit with chaotic behaviour. It exists in several variants (see the original publications in \cite{chua,chuaWu} and an application of a numerical method to a variant in \cite{gelss}). We consider the following three-dimensional one:
\begin{equation}
    \begin{split}
        \dot{\datapoint}^{(1)} &= \alpha \datapoint^{(2)} - \mu_0 \datapoint^{(1)} - \frac{\mu_1}{3} (\datapoint^{(1)})^3\\
         \dot{\datapoint}^{(2)} &= \datapoint^{(1)} - \datapoint^{(2)} +\datapoint^{(3)}\\
         \dot{\datapoint}^{(3)} &= -\beta \datapoint^{(2)}\\
         \alpha = 18,\quad \beta &= 33,\quad \mu_0 = -0.2,\quad \mu_1 = 0.01
    \end{split}
    \label{eq:chua}
\end{equation}

With these equations and parameters, the system has an attractor with two connected components
\begin{figure}[ht]
\includegraphics[width = \textwidth]{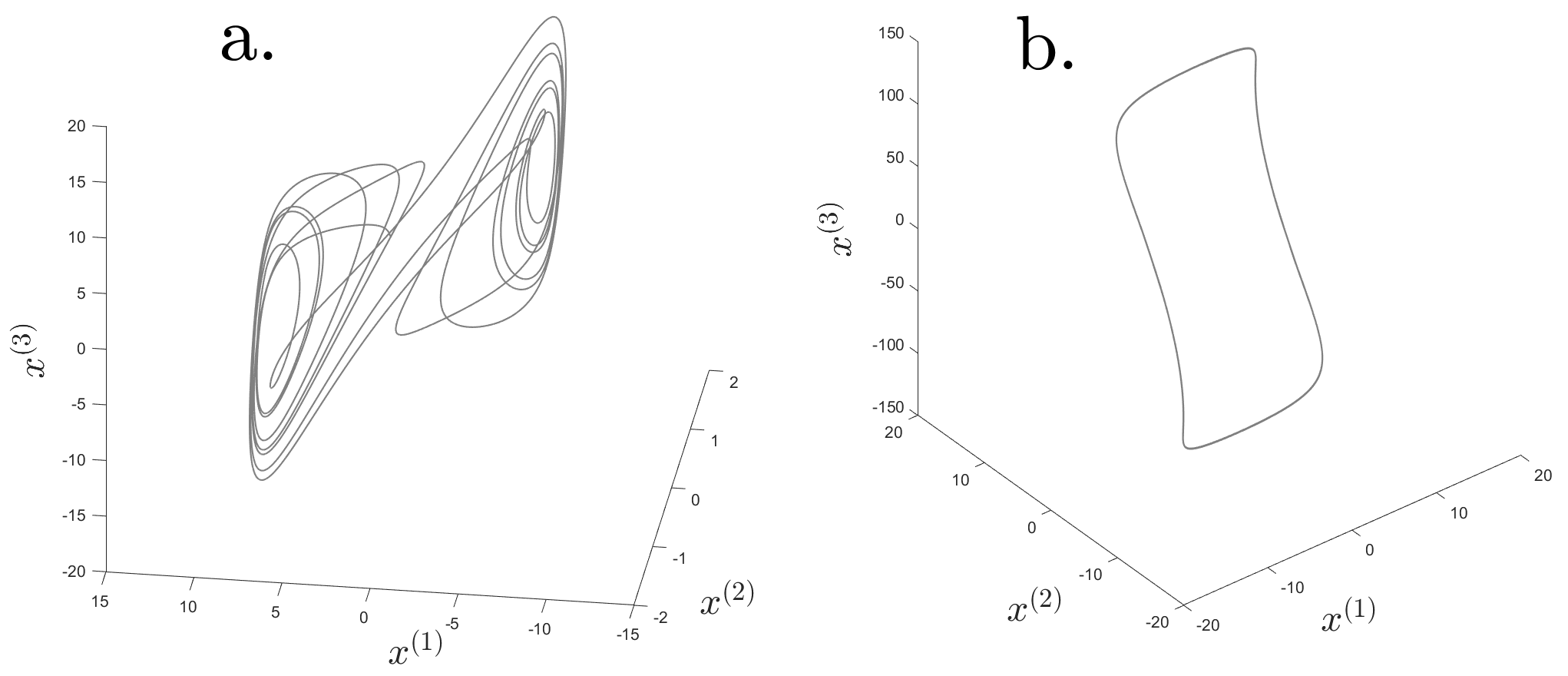}
\caption{Trajectories of the two connected components of the attractor of the Chua circuit from Eq.~\eqref{eq:chua}. \textbf{a.} Inner attractor consisting of switching between two lobes. \textbf{b.} Outer periodic cycle. Note the difference in the axes' scales between a.\ and b.}
\label{fig:chuaAttractors}
\end{figure}
(see Figure~\ref{fig:chuaAttractors}): one limit cycle and a more complex set (supporting complicated dynamical behaviour) which consists of two lobes, similarly to the Lorenz-63 attractor. We simulate two trajectories of $20000$ steps each starting close to each attractor with a step size of $h = 0.001$ with the RK4-scheme. We then solve \ref{eq:SPA1} with $K = 3$ and $K' = 4$ to construct a learning and a lifting polytope. For the mSPA model, we utilize memory here differently than in the previous examples due to the small step size. While previously we tried to predict $\datapoint_{t}$ on the basis of $\datapoint_{t-\tau},\dots,\datapoint_{t-\memory \tau}$, we now use a value for $\tau$ as the \textit{memory time step} and introduce the forward step size given by $\Delta t$ to predict $\datapoint_{t+\Delta t}$ with $\datapoint_{t},\datapoint_{t-\tau},\dots,\datapoint_{t-(\memory-1) \tau}$. Given the small step size $\Delta t$, the delay-coordinate map would project all points close to the set $\{(\datapoint_1,\dots,\datapoint_\memory) \in \R^{\memory\dimD} :x_1 = x_2 = \ldots = x_{\memory\dimD}\}$, strongly distorting the geometry of the original attractor (see Figure~\ref{fig:chua_memoryTimeStep_Full}). The same then holds for the delay-coordinate map of projections to a polytope. We therefore choose a bigger memory time step $\tau$ so that the delay-coordinate map unfolds the geometry of the attractor well but still make small forward time steps to preserve the accuracy of the approximated dynamics.
\begin{figure}[ht]
\includegraphics[width = \textwidth]{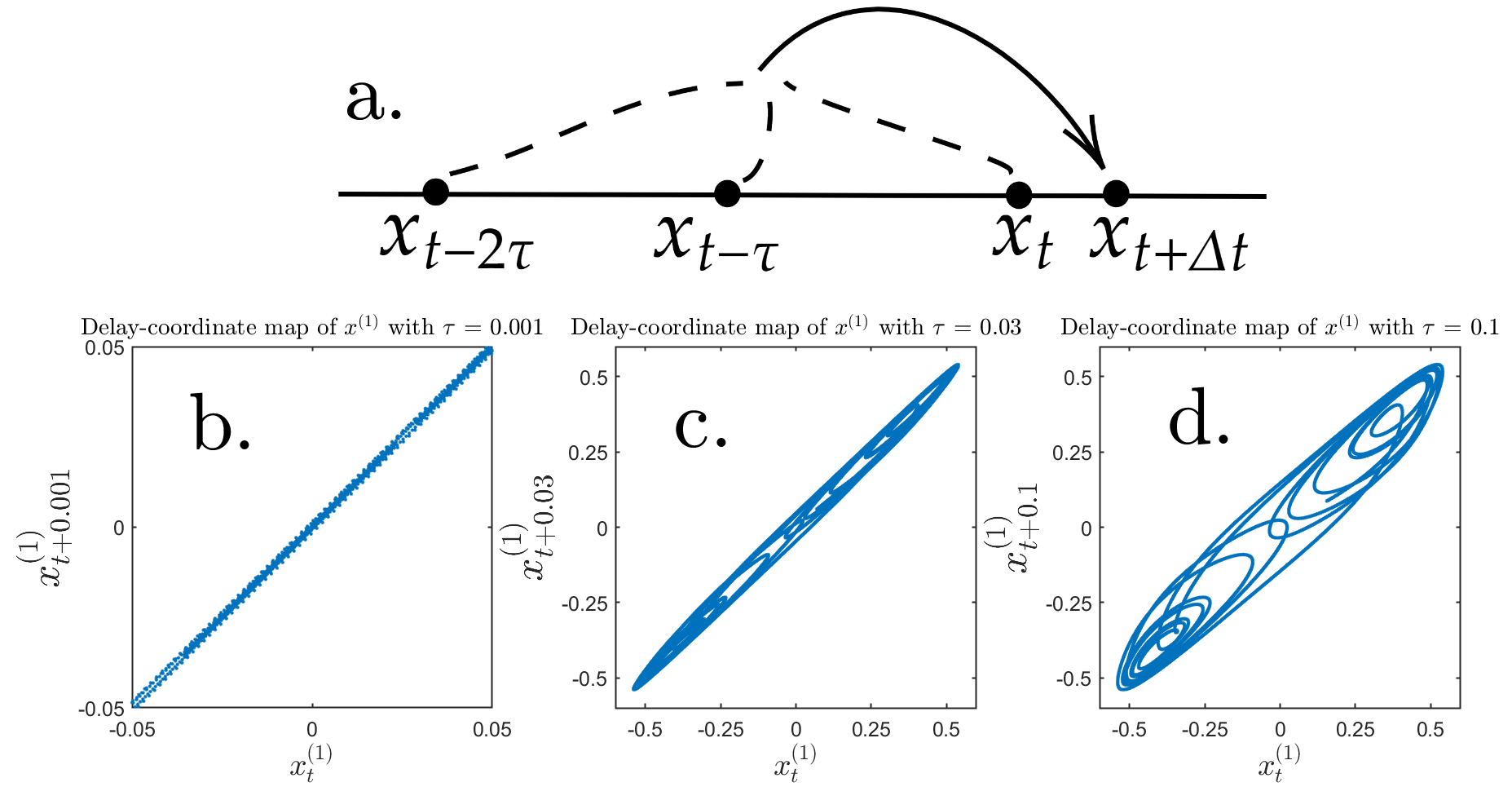}
\caption{\textbf{a.} Concept of using points separated by the memory time step $\tau$ to construct predictions using a smaller step size $\Delta t$. \textbf{b.}--\textbf{c.} Images of the 2-delay-coordinate map of $\datapoint^{(1)}$ with different values for $\tau$. For small $\tau$, the image of the 2-delay-coordinate map is close to the diagonal in $\R^2$ while for larger $\tau$ its shape comes closer to the one of the full Chua circuit.}
\label{fig:chua_memoryTimeStep_Full}
\end{figure}

We use $\Delta t = h = 0.001$ and $\tau = 0.03$, i.e., 30-fold the length of the prediction time step, and use $\memory = 7$ for the mSPA propagator. Clearly, a memory depth of $7$ is far greater than should be needed as suggested by Takens' Theorem, but it seems to be necessary for mSPA to capture the nonlinearity of the system. Since the Chua circuit is a chaotic system, we do not consider the short-term prediction error but compare the attractors of the true and predicted trajectories. In Figure~\ref{fig:chuaFullPlot} are illustrated the resulting long-term predictions of mSPA for both the inner attractor and the outer cycle. The outer cycle is easier to reconstruct and mSPA generates accurate predictions. For the inner attractor mSPA does not derive a quantitatively close reconstruction but does generate a qualitatively correct prediction maintaining the characteristic switching dynamics between the two lobes.

However, at least when using data from the inner and the outer attractors for the mSPA training, it proved difficult to find a polytope and a memory time step which enabled mSPA to produce this result. Many polytopes defined by different vertices, including the ones used here but scaled with a factor and different memory time steps, made mSPA fail in reconstructing the switching behaviour of the inner attractor. We always required a memory depth of $7$ to reconstruct the switching behaviour when we trained with trajectories converging to both the inner and outer attractors. Nevertheless, when we trained only on the inner attractor and with a memory depth of $6$, mSPA did recreate the switching between the lobes with most of the polytopes used (different solutions of \eqref{eq:SPA1}). In the barycentric coordinates, the shape of the inner attractor was well reconstructed, while the lifting to the full state space perturbed the curvature slightly. We further saw that choosing polytopes that are tight around the projected data of the inner attractor are usually more suitable. Therefore, given the chaotic nature of the system, although the mSPA predictions are not exact, we are still encouraged by the facts that an mSPA model could be found which qualitatively recreates the dynamics of both the outer and inner attractors, and that for the inner attractor only we can find models with a lower memory depth for most of the polytopes we used.

It is hence desirable to find, if not a rigorous mathematical theory, then at least an intuition on how to choose parameters of the method to produce optimal results. To this end, we made the following theoretical and numerical observations. 


\begin{figure}[ht]
\includegraphics[width = \textwidth]{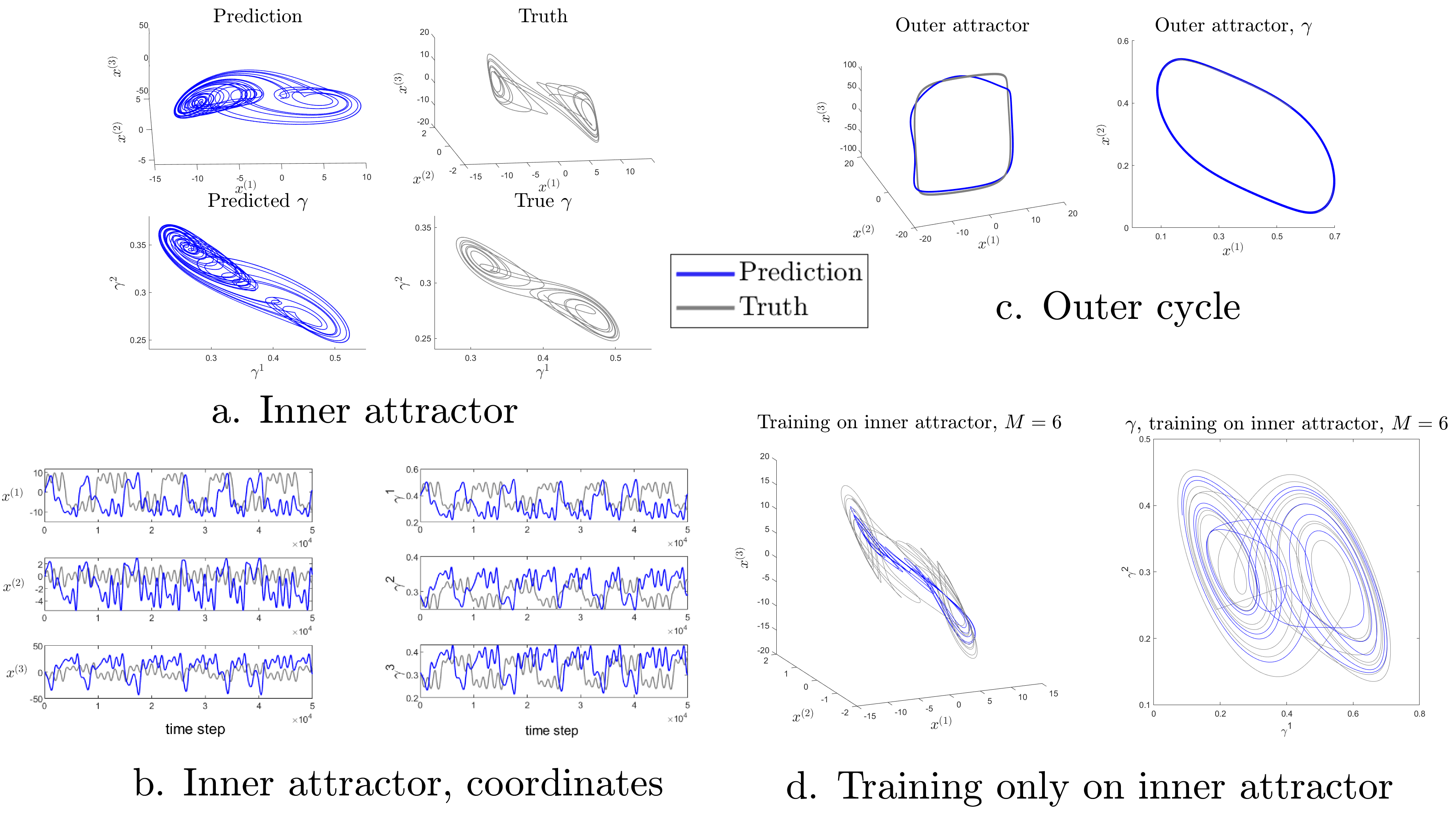}
\caption{Long-term predictions with the same mSPA model for the Chua circuit, starting on both attractors. We can see that the outer cycle can be accurately reconstructed with $\memory=7$ (\textbf{c.}). The inner attractor proves more difficult due to its chaotic behaviour. mSPA is however able to reproduce the lobe-switching behaviour (\textbf{a.} and \textbf{b.}) while the the reconstructed attractor is bigger than the original one and not as curved. Training only on the inner attractor admitted a closer reconstruction (\textbf{d.}) and already with $\memory = 6$.}
\label{fig:chuaFullPlot}
\end{figure}
\end{example}

\paragraph{Numerical observations}
While studying the Chua circuit, we made the following observation: while a memory depth of $7$ was necessary to reconstruct the switching in the inner attractor when considering both the inner attractor and outer cycle for training, we could succeed with $\memory = 6$ when considering only the inner attractor, hence choosing a smaller polytope and deriving a different matrix for $\hat{\Lambda}^\memory$. We chose different memory lags and used the same polytope as above but varied its size by scaling it with factors between $0.8$ and $1.3$. For each solution for the resulting \eqref{eq:mSPA2} problem, we computed a trajectory of $20000$ time steps and compared the distance between both attractors using the Hausdorff distance (see Appendix~\ref{sec:App_techsKS}). The result is depicted in Figure~\ref{fig:parameterStudy_hausmeas_chua}. We can see a niche which gives the optimal parameter setting for this example. The memory lag is best selected as $25$, close to the value of $30$ selected above. Figure~\ref{fig:chua_hausdorffMemorylag} shows the dependence on the memory lag for the scaling factor $1$ in more detail. As is consistent with the theoretical observations, there also exists a niche for the polytope size. Its optimal scaling factor is $1.05$, indicating that the polytope resulting from \ref{eq:SPA1} was already close to a good choice.
\begin{figure}
\includegraphics[width = \textwidth]{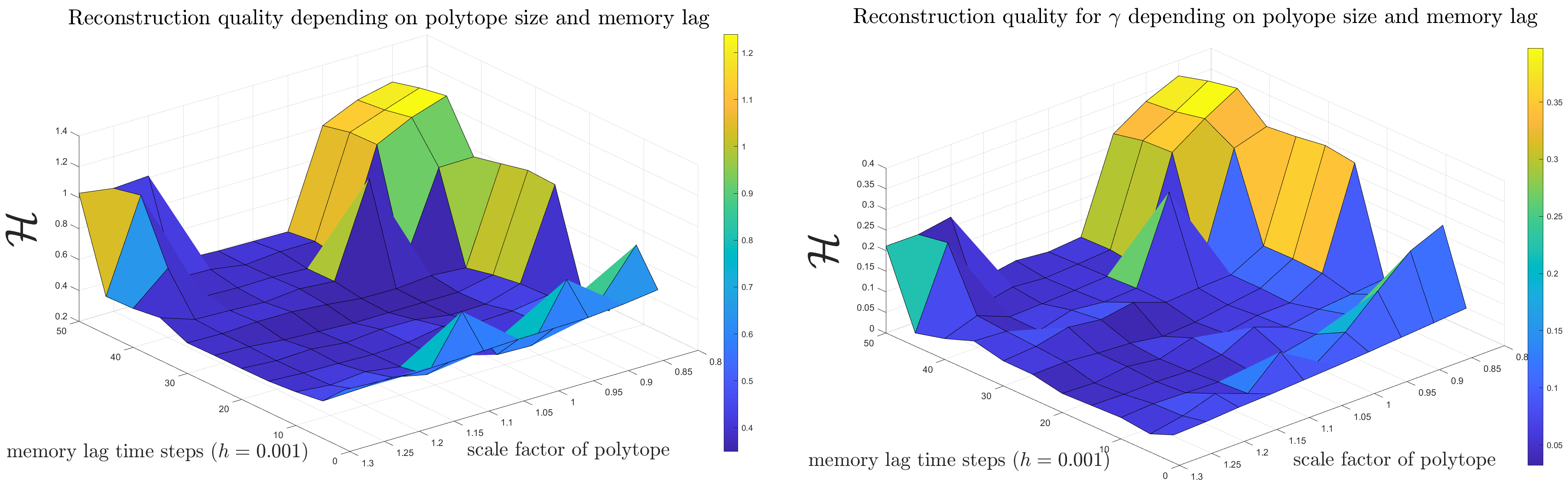}
\caption{Hausdorff distance between true trajectory (full space and barycentric coordinates) and mSPA prediction depending on parameters memory lag and size of polytope.}
\label{fig:parameterStudy_hausmeas_chua}
\end{figure}
\begin{figure}[ht]
\includegraphics[width = \textwidth]{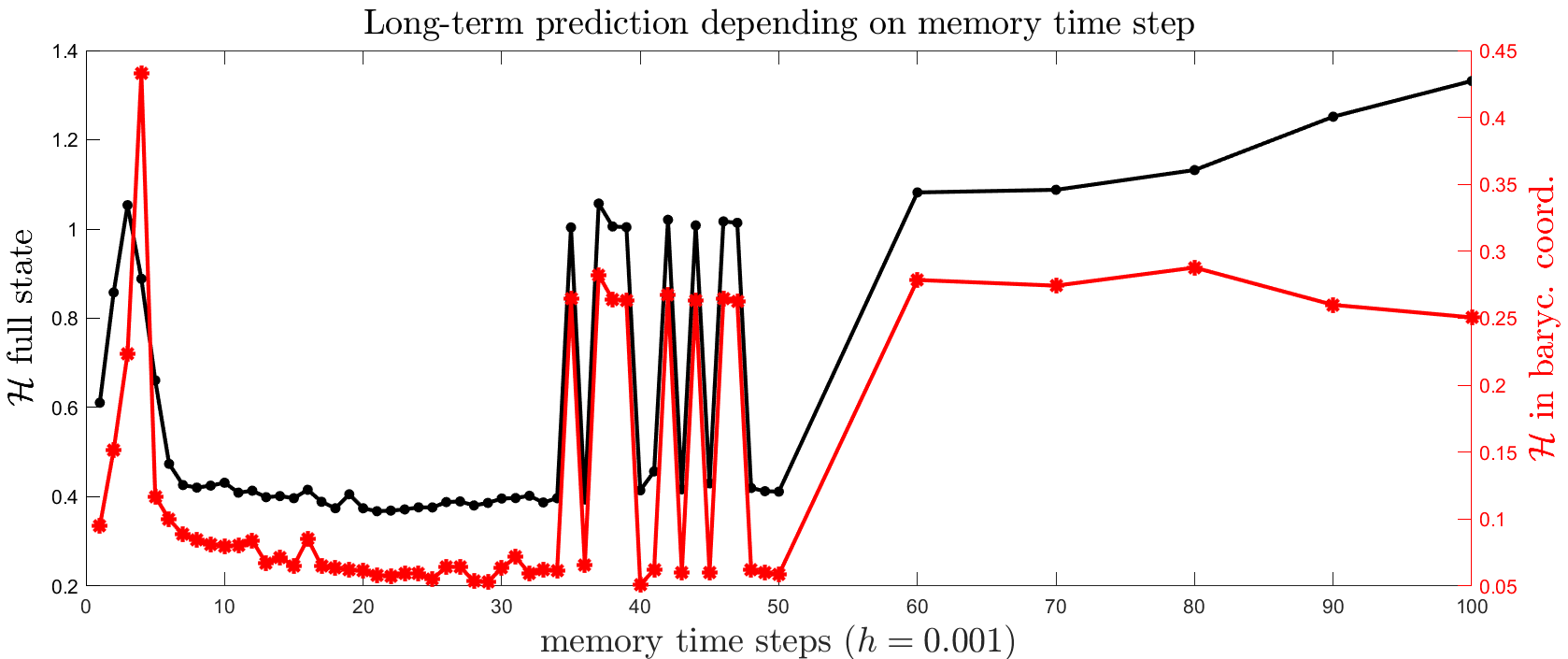}
\caption{Hausdorff distance $\mathcal{H}$ between reconstructed and true attractors dependent on the memory lag. Black: full state space. Red: in barycentric coordinates.}
\label{fig:chua_hausdorffMemorylag}
\end{figure}

\section{mSPA in connection to Koopman operator based methods}
\label{sec:App_Koopman}
We previously argued that mSPA is conceptually different than common regression methods since we project to a lower-dimensional manifold instead of directly transforming data into a higher-dimensional space. We do, however, want to point out several relations to such methods.

Identifying governing equations for the evolution of observables over time is addressed by different methods. Their common mathematical ground is the approximation of the action of the Koopman operator, $\mathcal{K}$, which is defined as propagating observables, so that~$\mathcal{K}\circ \observ = \observ \circ \gener$. The Koopman operator is an infinite-dimensional linear operator~\cite{klusKoopman}, for which different methods attempt to find a finite-dimensional approximation. To this end, the observable is typically transformed into a higher-dimensional space through application of scalar-valued basis functions $\observ_1,\dots,\observ_L$ which are then mapped back into the observation space by multiplication of a coefficient matrix $B$, i.e.,
\begin{equation}
    \mathcal{K}\circ \observ(\datapoint) \approx B [\observ_1,\dots,\observ_L]^T(\datapoint).
\end{equation}
In this light, we can observe the following relations to several common system identification techniques.
\paragraph{Sparse Identification of Nonlinear Dynamics (SINDy)}
The SINDy problems reads,
\begin{equation}
\Theta = \argmin_{\Theta'\in \R^{\dimD\times L}} \Vert \begin{bmatrix}
\observ(\datapoint_1),\dots,\observ(\datapoint_T)
\end{bmatrix}
- \Theta' \begin{bmatrix}
\observ_1(\datapoint_0) & \dots & \observ_1(\datapoint_{T-1})\\
\vdots & & \vdots\\
\observ_L(\datapoint_0) & \dots & \observ_L(\datapoint_{T-1})
\end{bmatrix}
\Vert_F + c \sum_{i,j=1}^{d,L} | \Theta'_{i,j} |.
\label{eq:SINDyminimizer}
\end{equation}
for a $c\geq 0$.
In SINDy~\cite{sindy,sindyc}, one tries to find $B$ by solving a regression problem between $[\observ_1,\dots,\observ_L]$ and $\mathcal{K}\circ\observ$ with the option to add a sparsity constraint. It has proven to an effective method in discovering the rules behind nonlinear dynamics in recent work (e.g.,~\cite{havok,klus2020}). mSPA can be seen as a special case of SINDy with memory terms, a special choice of basis functions -- the path affiliations -- and additional constraints on $\hat{\Lambda}$. Due to these choices, one is able to generate stable memory-requiring dynamics, such as in Example~\ref{exa:overlap}. We have seen in the previous examples that SINDy is prone to find models which are unstable. Further, one has to specify the basis functions in $\eta$ beforehand which either requires an intuition about the problem or a high number of basis functions. The latter can quickly yield an ill-conditioned nonlinear regression problem~\cite{freund}, so that the ensuing model might produce unstable dynamics. In mSPA we have a specific set of basis functions for which only three parameters, $K,K'$ and the memory depth $\memory$ have to be specified.

In \cite{lusch,champion}, two approaches to modelling dynamics are presented that are similar to the Learning and Lifting approach of mSPA. Both are based on the simultaneous learning of dimension reduction and dynamics identification in a single optimization problem. In \cite{lusch}, eigenfunctions of the Koopman operator of a dynamical system are found by using an autoencoder neural network. In \cite{champion}, a framework is presented in which data are compressed, propagated in time and lifted back using basis functions. In contrast to mSPA, both approaches do not use memory to counter the loss of information induced by dimension reduction. Rather, they rely on the data admitting a low dimensional representation which is sufficient to learn dynamics and make the lifting. In future work, mSPA could be modified in the light of this approach to do the dimension reduction, learning and lifting in one step to find the optimal polytope for dynamics reconstruction.

\paragraph{Extended Dynamic Mode Decomposition (EDMD)}
The EDMD problem reads,
\begin{equation}
\textbf{K} = \argmin_{\textbf{K}'\in \R^{L\times L}} \Vert \begin{bmatrix}
\observ_1(\datapoint_1) & \dots & \observ_1(\datapoint_{T})\\
\vdots & & \vdots\\
\observ_L(\datapoint_1) & \dots & \observ_L(\datapoint_{T})
\end{bmatrix}
- \textbf{K}' \begin{bmatrix}
\observ_1(\datapoint_0) & \dots & \observ_1(\datapoint_{T-1})\\
\vdots & & \vdots\\
\observ_L(\datapoint_0) & \dots & \observ_L(\datapoint_{T-1})
\end{bmatrix}
\Vert_F.
\label{eq:EDMDminimizer}
\end{equation}
Conceptually similar is EDMD~\cite{williams,liEDMD} which focuses on determining a linear mapping between $[\observ_1,\dots,\observ_L]^T$ and $[\mathcal{K}\circ\observ_1,\dots,\mathcal{K}\circ\observ_L]^T$ with the aim of uncovering eigenvalues and -functions of the Koopman operator. A special case is Hankel Dynamic Mode Decomposition (Hankel-DMD~\cite{arbabi}) where one uses memory by choosing $[\observ_1,\dots,\observ_L]$ as $[\observ,\mathcal{K}^{-1}\circ \observ,\dots,\mathcal{K}^{-\memory+1}\circ \observ]$ for this aim. In this light, mSPA is conceptually similar to Hankel-DMD, given that both use memory terms, with the observable $\observ$ then being the projection to the polytope $\mathcal{M}_\Sigma$. mSPA differs from Hankel-DMD in the facts that it does not apply a linear mapping to the memory terms directly but to the path affiliations and further aims at propagating dynamics rather than identifying characteristics of the Koopman operator. Of course, in mSPA the linear mapping is constrained to be a column-stochastic matrix.

\paragraph{Linear Autoregressive models (AR)} 
The AR problem reads,
\begin{equation}
\theta = \argmin_{\theta'\in \R^{\dimD\times \dimD\memory}} \Vert \begin{bmatrix}
\observ(\datapoint_\memory) & \dots & \observ(\datapoint_{T})\\
\end{bmatrix}
- \theta' \begin{bmatrix}
\observ(\datapoint_{\memory-1}) & \dots & \observ(\datapoint_{T-1})\\
\vdots & & \vdots\\
\observ(\datapoint_{0}) & \dots & \observ(\datapoint_{T-\memory})
\end{bmatrix}
\Vert_F.
\label{eq:ARminimizer}
\end{equation}
Algorithmically equivalent to Hankel-DMD are linear autoregressive models~\cite{brockwell,shumway,davissparse}. The aim is generally not to find eigenvalues- and functions of the Koopman operator but simply to model the evolution of $\observ$ by a linear approximation of the mapping between $[\observ,\mathcal{K}^{-1}\circ \observ,\dots,\mathcal{K}^{-\memory+1}\circ \observ]$ and $\mathcal{K}\circ \observ$. Linear AR models suffer from the same problems as linear models in general: typically they converge to a fixed point and are unable to generate nonlinear behaviour (if not applied to nonlinear coordinate transforms, such as a Fourier transform beforehand, as in \cite{pan}). If the dynamics are periodic, linear AR models can be used to find the periodicity but this typically requires a large memory depth (as in Example~\ref{exa:KSlifting}). This again yields the need for a high number of parameters from which unstable models can ensue. As such, they are usually not recommended for the identification of nonlinear dynamics.

\paragraph{Nonlinear AR (NAR)}
The NAR problem reads,
\begin{equation}
\theta = \argmin_{\theta'\in \R^{\dimD\times L}} \Vert \begin{bmatrix}
\observ(\datapoint_\memory) & \dots & \observ(\datapoint_{T})\\
\end{bmatrix}
- \theta' \begin{bmatrix}
\observ_1(\datapoint_{\memory-1},\dots,\datapoint_0) & \dots & \observ_1(\datapoint_{T-1},\dots,\datapoint_{T-\memory})\\
\vdots & & \vdots\\
\observ_L(\datapoint_{\memory-1},\dots,\datapoint_0) & \dots & \observ_L(\datapoint_{T-1},\dots,\datapoint_{T-\memory})
\end{bmatrix}
\Vert_F.
\label{eq:NARminimizer}
\end{equation}
In nonlinear AR models, one combines nonlinear basis functions and memory by estimating the function $F$ in the model form  $\datapoint_t = F(\datapoint_{t-1},\dots,\datapoint_{t-\memory})$. Often one defines basis functions $[\observ_1,\dots,\observ_L,\mathcal{K}^{-1}\circ \observ_1,\dots,\mathcal{K}^{-\memory+1}\circ \observ_L]^T$, making these models a close variant to the ones determined in SINDy. Similarly to SINDy or nonlinear regression problems in general, these are prone to be ill-conditioned, which might cause trajectories generated with the ensuing model to be highly unstable. mSPA represents a nonlinear AR model which, by construction of the basis $\Psi^\memory$ and the constraint on the coefficients in $\hat{\Lambda}^\memory$, guarantees stability. 

In Example~\ref{exa:KSlifting} we applied these methods to a variant of the Kuramoto--Sivashinsky PDE. Note that system identification for the KS PDE has also been performed in \cite{linfu} and \cite{lulin2015}. There the authors use a  Fourier representation of the solution and estimate a Nonlinear Autoregressive Moving Average Model with Exogeneous Input (NARMAX) model on some of the Fourier modes. NARMAX models are essentially nonlinear AR models which further include moving average terms of stochastic inputs and a control term (the exogeneous input). For more details we refer to~\cite{linfu,lulin2015,luchorin2015,billings}. The authors produce good results with certain memory depths but admit that they do not always derive stable models (including unstable linear AR models). Additionally, they use a Fourier perspective which is not necessarily sensible for not wave-like evolving systems. We do not see the method deployed in \cite{linfu,lulin2015} and mSPA as competition but rather complementary to each other, since NARMAX estimates memory-based dynamics of a manually chosen observable with chosen basis functions and mSPA estimates stable dynamics on the barycentric coordinates of points on the polytope derived with \ref{eq:SPA1} with specific basis functions and the option to lift back into the original state space. Future work could encompass the combination of the two approaches by, e.g., augmentation of mSPA to models with exogenous input, i.e., with control terms.

\small
\bibliographystyle{myalpha}
\bibliography{memorySPA_Paper}
\end{document}